\RequirePackage{fix-cm}
\documentclass[smallextended,draft]{svjour3} 
\smartqed 
\usepackage{graphicx}
\usepackage{mathptmx}      
\usepackage{amsmath,amssymb}
\usepackage{bm}

\usepackage{Gmacros}

\journalname{INGE}
\begin{document}

\title{Wasserstein Riemannian Geometry of Gaussian Densities\thanks{The Authors wish to thank two anonymous referees for helpful comments. G.~Pistone acknowledges the support of de Castro Statistics and Collegio Carlo Alberto. He is a member of GNAMPA-INdAM.}}


\author{Luigi~Malag\`o \and
  Luigi~Montrucchio \and
  Giovanni~Pistone
}

\authorrunning{L.~Malag\`o, L.~Montrucchio, G.~Pistone} 

\institute{Luigi Malag\`o \at
Romanian Institute of Science and Technology - RIST, Str. Virgil Fulicea nr. 17 \\ 400022 Cluj-Napoca, Romania \\
\email{malago@rist.ro}
\and
Luigi Montrucchio \at
Collegio Carlo Alberto,
Piazza Vincenzo Arbarello 8,
10122 Torino, Italy \\
\email{luigi.montrucchio@unito.it}
\and
Giovanni Pistone \at
de Castro Statistics, Collegio Carlo Alberto, 
Piazza Vincenzo Arbarello 8, 
10122 Torino, Italy \\
\email{giovanni.pistone@carloalberto.org}
}

\date{REVISED \today}

\maketitle

\begin{abstract}
The Wasserstein distance on multivariate non-degenerate Gaussian densities is a Riemannian distance. After reviewing the properties of the distance and the metric geodesic, we present an explicit form of the Riemannian metrics on positive-definite matrices and compute its tensor form with respect to the trace inner product. The tensor is a matrix which is the solution to a Lyapunov equation. We compute the explicit formula for the Riemannian exponential, the normal coordinates charts and the Riemannian gradient. Finally, the Levi-Civita covariant derivative is computed in matrix form together with the differential equation for the parallel transport. While all computations are given in matrix form, nonetheless we discuss also the use of a special moving frame.
\keywords{ Information Geometry \and Gaussian distribution \and Wasserstein distance \and Riemannian metrics \and Natural gradient \and Riemannian Exponential \and Normal coordinates \and Levi-Civita covariant derivative \and Optimization on positive-definite symmetric matrices}
\subclass{15B48 \and 53C23 \and 53C25 \and 60D05}
\end{abstract}

\section{Introduction\label{sec:introduction}}

Given two probability measures $\nu_1$ and $\nu_2$ on $\reals^n$, with finite second moments, consider the set $\mathcal P(\nu_1,\nu_2)$ of probability measures on the product sample space $\reals^{2n}$, such that the two $n$-dimensional margins have the prescribed distributions, $X_1 \sim \nu_1$ and $X_2 \sim \nu_2$. The index
\begin{equation*}\label{eq:firstG-general}
W^2 = \inf \setof{\expectat\mu {\normof{X_1-X_2}^2}}{\mu \in \mathcal P(\nu_1,\nu_2)}
\end{equation*}
as a measure of dissimilarity between distributions has been considered by many classical authors e.g., C. Gini, P. Levy, and M.R. Fr\'echet. There is considerable contemporary literature discussing the index $W$, which is usually called Wasserstein distance. E.g., the monograph by C.~Villani \cite{villani:2008optimal}. We want also to mention Y.~Brenier \cite{brenier:1991} and  R.J.~McCann \cite{mccann:2001}.

There is an important particular case, where the above problem reduces to the Monge transport problem. Borrowing the argument from M.~Knott and C.S.~Smith \cite{knott|smith:1984}, assume $\Phi \colon \reals^n \to \reals$ is a smooth strictly convex function and $\nabla \Phi(X_1) \sim \nu_2$. Clearly, the condition
\begin{equation*}
  \expectat \mu {\normof{X_1 - \nabla \Phi(X_1)}^2} \leq \expectat \mu { \normof{X_1-X_2}^2}, \quad \mu \in \mathcal P(\nu_1,\nu_2) \ ,
\end{equation*}
turns out to be equivalent to $\expectat \mu {X_1 \cdot \nabla \Phi(X_1)} \geq \expectat \mu {X_1 \cdot X_2}$. Latter inequality shows that the minimum quadratic distance is attained. In view of the new formulation, let us provide a proof.

If $\Psi$ denotes the convex conjugate of $\Phi$. We have
\begin{equation*}
  X_1 \cdot X_2 \leq \Phi(X_1) + \Psi(X_2)
\end{equation*}
and the equality case is
\begin{equation*}
  X_1 \cdot \nabla \Phi(X_1) = \Phi(X_1) + \Psi(\nabla \Phi X_1) \ .
\end{equation*}
By assumption $X_2 \sim \nabla \Phi (X_1)$ so that
\begin{multline*}
\expectat \mu {X_1 \cdot \nabla \Phi(X_1)} = \expectat \mu {\Phi(X_1) + \Psi(\nabla\Phi(X_1))} = \\ \expectat \mu {\Phi(X_1) + \Psi(X_2)} \geq \expectat \mu {X_1 \cdot X_2} \ .   
\end{multline*}

This argument, including an existence proof, is in Y. Brenier \cite{brenier:1991}. In the present paper we shall study the same problem where all the involved distributions are Gaussian. It would be feasible to reduce the Gaussian case to the general one. However, we resort to methods specially suited for this case.

\subsection{The Gaussian case}
\label{sec:gaussian-case}
Given two Gaussian distributions $\nu_i=\normalof n {\mu_i} {\Sigma_i}$, $i=1,2$, consider the set $\mathcal G(\nu_1,\nu_2)$ of Gaussian distributions on $\reals^{2n}$ such that the two $n$-dimensional margins have the prescribed distributions, $X_i \sim \nu_i$. The corresponding index is
\begin{equation}\label{eq:firstG}
W^2 = \inf \setof{\expectat\mu {\normof{X_1-X_2}^2}}{\mu \in \mathcal G(\nu_1,\nu_2)} \ .
\end{equation}

Observe that if $\mu_1=\mu_2=0$ and $U$ is a symmetric matrix such that $U\Sigma_1U = \Sigma_2$, then the previous argument applies by means of the convex function $\Phi(x) = \frac12 x^t U x$.

The value of $W^2$ in Eq.~\eqref{eq:firstG} as a function of the mean and the dispersion matrix has been computed by some authors, in particular: I. Olkin and F. Pukelsheim \cite{olkin|pukelsheim:1982}, D.~C. Dowson and B.~V. Landau \cite{dowson|landau:1982}, C.~R. Givens and R.~M. Shortt \cite{givens|shortt:1984}, M. Gelbrich \cite{gelbrich:1990}. They found the (equivalent) forms
\begin{equation} \label{eq:secondG}
\begin{aligned}
  W^2 & = \normof{\mu_1-\mu_2}^2 + \traceof{\Sigma_1+\Sigma_2 - 2 \left(\Sigma_1^{1/2}\Sigma_2\Sigma_1^{1/2}\right)^{1/2}} \\
& = \normof{\mu_1-\mu_2}^2 + \traceof{\Sigma_1+\Sigma_2 - 2 \left(\Sigma_1\Sigma_2\right)^{1/2}} \ .
\end{aligned}
\end{equation}

Further interpretations of $W$ are available. R. Bhatia et al. \cite{bhatia|jain|lim:2018} showed that $W$ is also the solution of constrained minimization problems for the Frobenius matrix norm $\normof M = \sqrt{\traceof{M^*M}}$, when $\mu_1=\mu_2=0$. Especially,
\begin{equation*}
W = \min \setof{\normof{\Sigma_1^{1/2}U-\Sigma_2^{1/2}V}}{\text{$U$ and $V$ orthogonal}} \ .
\end{equation*}
Notice that $\Sigma^{1/2}U$ is the generic transformation of the standard Gaussian to the Gaussian with dispersion matrix $\Sigma$.

Because of the exponent 2 in Eq.~\eqref{eq:firstG}, the $W$ distance is more precisely called $L^2$-Wasserstein distance. Other exponents or other distances could be used in the definition. The quadratic case is particularly relevant as $W$ is a Riemannian distance. More references will be given later.

In an Information Geometry perspective, we can mimic the argument of the seminal paper by Amari \cite{amari:1998natural}, who derived the notion of both Fisher metric and natural gradient, from the second order approximation of the Kullback-Leibler divergence. 

It will be shown (see Sec. 2) that the value $W^2$ of  Eq.~\eqref{eq:secondG} has the differential second-order expansion for small $H$:
\begin{equation}
  \label{eq:secondorderW}
  \traceof{\Sigma + (\Sigma+H) - 2\left(\Sigma^{1/2}(\Sigma+H)\Sigma^{1/2}\right)^{1/2}} \simeq  \traceof{\lyapunov \Sigma H \Sigma \lyapunov \Sigma H} \ ,
\end{equation}
where $\lyapunov \Sigma H = X$ is the solution to the Lyapunov equation $X \Sigma + \Sigma X = H$.

The quadratic form in the RHS of Eq.~\eqref{eq:secondorderW} provides a candidate to be the Riemannian inner product associated with the distance $W$. In addition, if $f$ is a smooth real function defined on a small $W$-sphere. i.e., $W(\Sigma,\Sigma+H) = \epsilon$ for small $\epsilon$, then the  increment $f(\Sigma+H)-f(\Sigma)$ is maximized along the direction
\begin{equation*}
  \label{eq:firstgrad}
  \Grad f(\Sigma) =\nabla f(\Sigma)\Sigma+\Sigma \nabla f(\Sigma) \ ,
\end{equation*}
where here $\nabla$ denotes the Euclidean gradient. The operator $\Grad$ is Amari's natural gradient, i.e., the Riemannian gradient.

It is remarkable that all geometric objects shown in the previous equations above may be expressed as matrix operations. In this paper, we proceed in developing systematically the Wasserstein geometry of Gaussian models according to such a formalism.

\subsection{Relations with the literature on the general transport theory}
\label{sec:relet-with-gener}
The Wasserstein distance and its relevant geometry can be studied non-parametrically also for general distributions. We do not pursue in this direction and refer to the monograph by C.~Villani \cite{villani:2008optimal}. The $L^2$-Wasserstein metric geometry has been shown to be Riemannian by F.~Otto \cite[\S 4]{otto:2001} and J.~Lott \cite{lott:2008calculations}. Cf. the earlier account by J.D. Lafferty \cite{lafferty:1988}.

Let us briefly discuss Otto's approach in the language of Information Geometry, i.e., with reference to S.~Amari and H.~Nagaoka \cite{amari|nagaoka:2000}. In view of  the non-parametric approach first introduced in \cite{pistone|sempi:95}, and denoted by $\mathcal M$ the set of $n$-dimensional Gaussian densities with zero mean, the vector bundle
\begin{equation*}
  \label{eq:1}
H\mathcal M = \setof{(\rho,\phi)}{\rho \in \mathcal M, \phi \in L^2(\rho), \int \phi \ \rho = 0}  
\end{equation*}
is the Amari Hilbert bundle on $\mathcal M$. The Hilbert bundle contains the statistical bundle whose fibers consist of the scores $\left. \derivby t \log \rho(t) \right|_{t=0}$ for all smooth curves $t \mapsto \rho(t) \in \mathcal M$ with $\rho(0)=\rho$. In turn, the statistical bundle is the tangent space of $\mathcal M$ considered as an exponential manifold, see \cite{pistone|sempi:95,pistone:2013GSI}.

In our present case, since the model $\mathcal M$ is an exponential family, the natural parameter is the concentration matrix $C = \Sigma^{-1}$. The log-likelihood is
\begin{equation*}
  \label{eq:2}
  \log \rho(y;C) = -\frac12 \log 2\pi + \frac12 \log \det C - \frac12 y^*C y \ .
\end{equation*}

If $V$ is a symmetric matrix, the derivative of $C \mapsto \log \rho(y;C)$ in the direction $V$ is
\begin{equation*}
  \label{eq:3}
  d_V \log \rho(y;C) = \frac12 \traceof{C^{-1}V} - \frac12 y^*Vy = \traceof{\phi(y;C)V}
\end{equation*}
where $\phi(y;C) = \frac12(C^{-1} - yy^*)$ is a symmetric matrix identified with a linear operator on symmetric matrices $\sym n$, equipped with the Frobenius inner product. The fiber at $\rho(\cdot;C)$ consists of the vector space of functions $\traceof{\phi(\cdot;C)V}$, $V \in \sym n$. The inner product in the Hilbert bundle, restricted to the parameterized statistical bundle, is the Fisher metric

\begin{multline}
  \label{eq:4}
  F_C(U,V) = \int d_U \log \rho(y;C)d_V \log \rho(y;C) \ \rho(y;C) \ dy = \\ - \int d_U \traceof{\phi(y;C)V} \ \rho(y;C) \ dx = \frac12 \traceof{UC^{-1}VC^{-1}} \ .\end{multline} 
The study of the Fisher metric in the Gaussian case has been done first by L.T.~Skovgaard \cite{skovgaard:1984}.

F. Otto \cite[\S 1.3]{otto:2001}, who was motivated by the study of a class of partial differential equation, considered a inner product defined on smooth functions of the $\rho$-fiber of the Hilbert bundle, as
\begin{equation}
  \label{eq:otto-metric}
(\phi_1,\phi_2) \mapsto \int \nabla \phi_1(x) \cdot \nabla \phi_2(x) \ \rho(x) \ dx \ .
\end{equation}
In the non-parametric case, Otto's metric of Eq.~\eqref{eq:otto-metric} is related to the Wasserstein distance, for a detailed study of such a metric see J.~Lott \cite{lott:2008calculations}. 

If we apply this definition to our score $\traceof{\phi(y;C)V} = \traceof{\frac12(C^{-1}-yy^*)V}$ and $V \in \sym n$, the gradient is $\nabla \traceof{\phi(y;C)V} = - V y$ and the metric becomes 

\begin{multline} \label{eq:metric-otto-exponential}
G_C(U,V) = \int \nabla \traceof{\phi(y;C)U} \cdot \nabla \traceof{\phi(y;C)V} \ \rho(y;C) \ dy = \\ \int y^* V U y \ \rho(y;\Sigma) \ dy = \traceof{UC^{-1}V} \ .    \end{multline}
The equivalence between the metric in Eq. \eqref{eq:metric-otto-exponential} and the one in Eq.~\eqref{eq:4} can be seen by a change of parameterization both in $\mathcal M$ and in each fiber. First, one must define the inner product at $\Sigma$ to be the inner product computed in the bijection $\Sigma \leftrightarrow C$, to get $\traceof{U \Sigma V}$, which is the form of the metric provided by A. Takatsu \cite[Prop. A]{takatsu:2011osaka}. Second, one has to change the parameterization on each fiber of the statistical bundle by $U \mapsto U \Sigma + \Sigma U$. The involved change of parameterization in the statistical bundle $(C,U) \mapsto (C^{-1},UC^{-1}+C^{-1}U)$ whose inverse is $(\Sigma,X) \mapsto (\Sigma^{-1},\lyapunov \Sigma X)$ produces the desired inner product. 

We mention also that the Machine Learning literature discusses a divergence introduced by A. Hyv\"arinen \cite{MR2249836}, which is related to Otto's metric. Precisely, in the concentration parameterization the Hyv\"arinen divergence is

\begin{multline*}
  \KH {D}{C} = \frac12 \int \aval{\nabla \log \rho(y;D) - \nabla \log \rho(y;C)}^2  \rho(y;C) \ dy = \\ \frac12 \int \aval{Dy-Cy}^2 \rho(y;C) \ dy = \traceof{C^{-1}(D-C)^2} \ ,
\end{multline*}
and the second derivative of $D \mapsto \KH D C$ at $C$ is
\begin{equation*}
  d^2\KH C C [X,Y] = \traceof{XC^{-1}Y} \ . 
\end{equation*}
In Statistics, Hyv\"arinen divergence is related to local proper scoring rules,  see M.~Parry et al. \cite{parry|dawid|lauritzen:2012}.

\subsection{Overview}\label{sec:overview}

The first two sections of the paper are mostly review of known material. In Sec.~\ref{sec:general-set-up} we recall some properties of the space of symmetric matrices. In particular: Riccati equation, Lyapunov equation, and the calculus regarding to the two mappings $\SQ \colon A \mapsto A^2$ and $\SQRT \colon A \mapsto A^{1/2}$. The mapping $\sigma \colon A \mapsto AA^*$, where $A$ is a non-singular square matrix is shown to be a submersion and the horizontal vectors at each point is computed. Despite of our manifold being finite dimensional, there is no need of choosing a basis, as all operations of interest are matrix operations. For that reason, we rely on the language of non-parametric differential geometry of W. Klingenberg \cite{klingenberg:1995} and S. Lang \cite{lang:1995}.

In Sec.~\ref{sec:quadr-diss-index} we discuss known results about the metric geometry induced by the Wasserstein distance. These results are re-stated in Prop. \ref{prop:CFT} and, for sake of completeness, we provide a further proof inspired by \cite{dowson|landau:1982}. It is possible to write down an explicit metric geodesic as done by R.J.~McCann \cite[Example 1.7]{mccann:2001}, see Prop.~\ref{prop:geo}.
The space of
non-degenerate Gaussian measures (or, equivalently, the space of positive
definite matrices) can be endowed with a Riemann structure that induces the Wasserstein distance. This is elaborated in Sec.~\ref{Sect:WASS}, where we use the presentation given by \cite{takatsu:2011osaka}, cf. also \cite{bhatia|jain|lim:2018}, which in turn adapts to the Gaussian case the original work \cite[\S 4]{otto:2001}. 

The remaining part of the paper is offered as a new contribution to this topic. The Wasserstein Riemannian metric turns out to be
\begin{equation}
  \label{eq:firstW}
  W_\Sigma(U,V) = \traceof{\lyapunov \Sigma U \Sigma\, \lyapunov \Sigma V} = \frac12 \traceof{\lyapunov \Sigma U V} \ ,
\end{equation}
at each matrix $\Sigma$, and where $U,V$ are symmetric matrices. By submersion methods  we study the more general problem of the horizontal surfaces in $\GLof n$, characterized in Prop.~\ref{prop:horizontal-surf}. As a specialized  case we get the Riemannian geodesic which agrees with the metric geodesic of Section 3. 

The explicit form of Riemannian exponential is obtained in Sec.~\ref{sec:riem-expon}. The natural (Riemannian) gradient is discussed in Sec.~\ref{sec:natural-gradient} and some applications to optimization are provided in Sec.~\ref{sec:optimization}. The analysis of the second-order geometry is treated in Sec.~\ref{sec:second-order}, where we compute the Levi-Civita covariant derivative, the Riemannian Hessian, and discuss other related topics. However, the curvature tensor will not be taken into consideration in the present paper.

In the final Sec.~\ref{sec:discussion}, we discuss the results in view of applications and in  Information Geometry of statistical sub-models of the Gaussian manifold.

\section{Symmetric matrices\label{sec:general-set-up}}

The set ${\mathcal{G}}^{n}$ of Gaussian distributions on ${\reals}^{n}$ is in 1-to-1 correspondence with the space of its parameters ${\mathcal{G}}^{n}\ni \operatorname{N}_{n}\left( \mu ,\Sigma \right) \leftrightarrow
(\mu ,\Sigma )\in {\reals}^{n}\times \operatorname{Sym}^{+}\left( n\right)$. Moreover, ${\mathcal{G}}^{n}$ is closed for the weak convergence and the
identification is continuous in both directions. A reference for Gaussian distributions is the monograph T.W.~Anderson \cite{anderson:2003}.

For ease of later reference, we recall a few results on spaces of matrices. General references are the monographs by P.~R. Halmos \cite{halmos:1958}, J.~R. Magnus and H. Neudecker \cite{magnus|neudecker:1999}, and R. Bhatia \cite{bhatia:2007PDM}.

The vector space of $n\times m$ real matrices is denoted by $\operatorname{M}(n\times m)$, while square matrices are denoted  $\operatorname{M}(n)=\operatorname{M}(n\times n)$. It is an Euclidean space of dimension $nm$ and the vectorization mapping $\operatorname{M}
(n\times m)\ni A\mapsto \mathbf{vec}\left( A\right) \in {\reals}^{nm}$
is an isometry for the Frobenius inner product $\left\langle A,B\right\rangle =(\mathbf{vec}\left( A\right) )^{\ast }(\mathbf{vec}\left( B\right) )=\operatorname{Tr}\left( AB^{\ast }\right)$.

Symmetric matrices $\sym n$ form a vector subspace of $M(n)$ whose orthogonal complement is the space of anti-symmetric matrices $\asym n$. We will find it convenient the use, with regard to symmetric matrices, of the equivalent inner product $
\left\langle A,B\right\rangle _{2}=\frac{1}{2}\operatorname{Tr}\left( AB\right)$, see e.g.  Eq.~\eqref{eq:onehalf} below. The closed pointed cone of non-negative-definite symmetric
matrices is denoted by $\psym n$ and its interior, the
open cone of the positive-definite
symmetric matrices, by $\ppsym n$. 

Given $A,B\in \operatorname{Sym}\left( n\right) $, the equation $TAT=B$ is called
Riccati equation. If $A\in \operatorname{Sym}^{++}\left( n\right) $ and $B\in 
\operatorname{Sym}^{+}\left( n\right) $, then the equation $TAT=B$ has unique
solution $T\in \operatorname{Sym}^{+}\left( n\right) $. In fact, from $TAT=B$ it follows $A^{1/2}TA^{1/2}A^{1/2}TA^{1/2}=A^{1/2}BA^{1/2}$ and, in turn, $A^{1/2}TA^{1/2}=\left( A^{1/2}BA^{1/2}\right) ^{1/2}$ because $T \in \psym n$. Hence, the solution to Riccati equation is
\begin{equation}
T=A^{-1/2}\left( A^{1/2}BA^{1/2}\right) ^{1/2}A^{-1/2} \ .  \label{eq:ricc}
\end{equation}
Notice that $\detof T = \detof A ^{-1/2} \detof B ^{1/2}$, consequently $\detof T > 0$ if $\detof B > 0$. In terms of random variables, if $X \in \normalof n 0 A$ and $Y = \normalof n 0 B$, then $T$ is the unique matrix of $\psym n$ such that $Y \sim TX$.

A more compact closed-form solution of the Riccati equation is available. Given $A \in \ppsym n$ and $B \in \psym n$, observe that $AB = A^{1/2}(A^{1/2}BA^{1/2})A^{-1/2}$. By similarity, the eigenvalues of $AB$ are non-negative, hence the square root
\begin{equation}
  \label{eq:sqrtAB}
  (AB)^{1/2} = A^{1/2}(A^{1/2}BA^{1/2})^{1/2}A^{-1/2} 
\end{equation}
is well defined, see \cite[Ex. 4.5.2]{bhatia:2007PDM}. Therefore, an equivalent formulation of  Eq.~\eqref{eq:ricc} is
\begin{equation}\label{eq:AB1}
T = A^{-1} A^{1/2}\left( A^{1/2}BA^{1/2}\right) ^{1/2}A^{-1/2} = A^{-1} (AB)^{1/2} \ .
\end{equation}
Since $AB = A(BA)A^{-1}$, the eigenvalues of $AB$ and $BA$ are identical, so that the same argument used before yields too 
\begin{equation}\label{eq:AB2}
T = (BA)^{1/2} A^{-1} \ .  
\end{equation}

The square mapping $\SQ \colon A \mapsto A^2$ is an injection of $\ppsym n$ onto itself with derivative $d_X \SQ(A) = XA + AX$. Hence, the derivative operator $d\SQ(A)$ is invertible. An alternative notation for the derivative we find convenient to use now and then is $d_X \SQ(A) = d \SQ(A)[X]$.

For each assigned matrix $V \in \sym n$, the matrix $X = (d \SQ(A))^{-1} V$ is the unique
solution $X$ in the space $\sym n$ to the Lyapunov
equation
\begin{equation}\label{eq:LIA}
V= X A + A X \ .  
\end{equation}
Its solution will be written $X = \lyapunov A V$. Clearly we have also
\begin{equation}\label{eq:obvious}
V =\lyapunov A V A + A \lyapunov A V \quad \text{and}
\quad X = \lyapunov A {XA+AX} \ .  
\end{equation}

The Lyapunov operator itself can be seen as a derivative. In fact, the inverse of the square mapping $\SQ$ is the square root mapping $\SQRT \colon \Sigma \to \Sigma^{1/2}$. By the derivative-of-the-inverse rule,
\begin{equation}\label{eq:DRO}
d_{V} \SQRT(\Sigma) = (d\SQ(\SQRT(\Sigma)))^{-1}[V] = \lyapunov {\Sigma^{1/2}} V \ .
\end{equation}

If $\Sigma$ is the dispersion of a non-singular Gaussian distribution, then $C = \Sigma^{-1} \in \ppsym n$ is the concentration matrix and represents an alternative and useful parameterization. From the Lyapunov equation $V = X\Sigma + \Sigma X$ we obtain $\Sigma^{-1}V\Sigma^{-1} = \Sigma^{-1}X + X\Sigma^{-1}$, hence 
\begin{equation*}\label{eq:ly1}
\lyapunov \Sigma V = \lyapunov {\Sigma^{-1}} {\Sigma^{-1}V\Sigma^{-1}} \quad \text{and} \quad \lyapunov {\Sigma^{-1}} U = \lyapunov \Sigma {\Sigma U \Sigma} \ .
\end{equation*}

Likewise, another useful formula is
\begin{equation} \label{eq:ly2}
  \lyapunov \Sigma V = \Sigma^{-1/2} \lyapunov \Sigma {\Sigma^{-1/2}V\Sigma^{-1/2}} \Sigma^{-1/2} \ .
\end{equation}

There is also a relation between the Lyapunov equation and the trace. From $X \Sigma + \Sigma X = V$, it follows $\Sigma^{-1} X \Sigma + X = \Sigma^{-1}V$. Then
\begin{equation}\label{eq:traceL}
\traceof{\lyapunov \Sigma V} = \frac12 \traceof{\Sigma^{-1}V} \ .  
\end{equation}

We will later need the derivative of the mapping $A \mapsto \lyapunov A V$, for a fixed $V$. Differentiating the first identity in Eq.~\eqref{eq:obvious} in the direction $U$, we have 
\begin{equation*}
0= d_U \lyapunov A V A + \lyapunov A V U + U \lyapunov A V + A \, d_U \lyapunov A V \ . 
\end{equation*}
Hence $d_{U}\lyapunov A V$ is the solution to the
Lyapunov equation 
\begin{equation*}
d_U \lyapunov A V A + A \ d_U \lyapunov A V =
- (\lyapunov A V U + U \lyapunov A V) \ ,
\end{equation*}
so that we get
\begin{equation}\label{eq:DEDE}
d_U \lyapunov A V  =
- \lyapunov A {\lyapunov A V U + U \lyapunov A V} \ .
\end{equation}

It will be useful in the following to evaluate the second derivative of the mapping $\SQRT \colon \Sigma \mapsto \Sigma^{1/2}$. From  Eqs.~\eqref{eq:DRO} and \eqref{eq:DEDE} it follows
\begin{equation*} \label{eq:DRO2}
  d^2 \SQRT(\Sigma)[U,V] = \lyapunov {\Sigma^{1/2}} {\lyapunov {\Sigma^{1/2}} V \lyapunov {\Sigma^{1/2}} U + \lyapunov {\Sigma^{1/2}}U \lyapunov {\Sigma^{1/2}} V} \ . 
\end{equation*}

Lyapunov equation plays a crucial role, as the linear operator $\mathcal L_A$ enters the expression of the Riemannian metric with respect to the standard inner product, see Eq.~\eqref{eq:firstW}.  As a consequence, the numerical implementation of the inner product $W_\Sigma(U,V)$ will require the computation of the matrix $\lyapunov \Sigma U$. There are many ways to write down the closed-form solution to Eq.~\eqref{eq:LIA}. They are discussed in \cite{bhatia:2007PDM}. However, efficient numerical solutions are not based on the closed forms, but rely on specialized numerical algorithms, as discussed by E.~L. Wachspress \cite{wachspress:2008} and by V. Simoncini \cite{simoncini:2016}.

We now turn to the computation of the second-order approximation of $W^2$ in Eq.~\eqref{eq:secondG}. 

Fix $\Sigma \in \ppsym n$ and let $H \in \sym n$ so that $(\Sigma \pm H) \in \ppsym n$. Hence, $\Sigma + \theta H\in \ppsym n$ for all $\theta \in [-1,+1]$. Consider the expression of $W^2$ with $\mu_1=\mu_2=0$, $\Sigma_1=\Sigma$, $\Sigma_2=\Sigma+\theta H$, namely
\begin{equation*}
 \theta \mapsto  W^2(\Sigma,\Sigma+\theta H) = 
  2 \traceof \Sigma + \theta \traceof H - 2 \traceof{\left(\Sigma^2+ \theta \Sigma^{1/2}H\Sigma^{1/2}\right)^{1/2}} \ .
\end{equation*}

By Eq.~\eqref{eq:DRO} and Eq.~\eqref{eq:traceL}, the first-order derivative is

\begin{multline*}
  \derivby \theta W^2(\Sigma,\Sigma+\theta H) = \traceof H - 2 \traceof{\lyapunov {\left(\Sigma^2+ \theta \Sigma^{1/2}H\Sigma^{1/2}\right)^{1/2}}{\Sigma^{1/2}H\Sigma^{1/2}}} = \\
  \traceof H - \traceof{\left(\Sigma^2+ \theta \Sigma^{1/2}H\Sigma^{1/2}\right)^{-1/2}\left(\Sigma^{1/2}H\Sigma^{1/2}\right)} \ .
\end{multline*}

Observe that $\left. \derivby \theta W^2(\Sigma,\Sigma+\theta H)\right|_{\theta=0} = 0$. 

The second derivative is 
\begin{equation*}
  \dderivby \theta W^2(\Sigma,\Sigma+\theta H) = \traceof{\derivby \theta \left(\Sigma^2+ \theta \Sigma^{1/2}H\Sigma^{1/2}\right)^{-1/2}\left(\Sigma^{1/2}H\Sigma^{1/2}\right)}
  \end{equation*}
  with
  
\begin{multline*}
\derivby \theta \left(\Sigma^2+ \theta \Sigma^{1/2}H\Sigma^{1/2}\right)^{-1/2} = 
\left(\Sigma^2+ \theta \Sigma^{1/2}H\Sigma^{1/2}\right)^{-1/2} \times \\ \lyapunov {\left(\Sigma^2+ \theta \Sigma^{1/2}H\Sigma^{1/2}\right)^{1/2}}{\Sigma^{1/2}H\Sigma^{1/2}}\left(\Sigma^2+ \theta \Sigma^{1/2}H\Sigma^{1/2}\right)^{-1/2} \ ,
\end{multline*}
so that

\begin{multline*}
  \left.  \dderivby \theta W^2(\Sigma,\Sigma+\theta H) \right|_{\theta=0} =
  \traceof{\Sigma^{-1}\lyapunov {\Sigma}{\Sigma^{1/2}H\Sigma^{1/2}}\Sigma^{-1}\Sigma^{1/2}H\Sigma^{1/2}} = \\
  \traceof{\Sigma^{-1/2}\lyapunov{\Sigma}{\Sigma^{1/2}H\Sigma^{1/2}}\Sigma^{-1/2}H} = \traceof{\lyapunov{\Sigma}{H}H} \ ,
\end{multline*}
where Eq.~\eqref{eq:ly2} has been used. Finally, observe that

\begin{multline}\label{eq:onehalf}
\traceof{\lyapunov{\Sigma}{H}\Sigma \lyapunov \Sigma H} = \traceof{ \lyapunov \Sigma H \lyapunov{\Sigma}{H}\Sigma} = \\ \frac12 \traceof{ \lyapunov \Sigma H \left(\lyapunov \Sigma H \Sigma + \Sigma \lyapunov \Sigma H\right)} = \frac12 \traceof{\lyapunov \Sigma H H} 
\end{multline}

We can conclude that
\begin{equation*}
W^2(\Sigma,\Sigma + \theta H) = \frac{\theta^2}2\traceof{\lyapunov{\Sigma}{H}H} + \smallo(\theta^2)  = \theta^2 \traceof{\lyapunov{\Sigma}{H} \Sigma \lyapunov \Sigma H} + \smallo(\theta^2) \ .  
\end{equation*}
Therefore, the bi-linear form in the RHS suggests the form of the Riemannian metric to be derived.

\subsection{The mapping $A \mapsto AA^*$} 
We study now the extension of the square operation to general invertible matrices, namely the mapping $\MMT : \GLof n \to \ppsym n$, defined by $\MMT(A) = AA^*$. Next proposition shows that this operation is a 
submersion. We recall first its definition, see \cite[Ch. 8, Ex. 8--10]{docarmo:1992} or \cite[\S II.2
]{lang:1995}.

Let ${\mathcal{O}}$ be an open set of the Hilbert space $H$, and $f\colon{
\mathcal{O}}\rightarrow{\mathcal{N}}$ a smooth surjection from the Hilbert
space $H$ onto a manifold ${\mathcal{N}}$, i.e., assume that for each $A\in{
\mathcal{O}}$ the derivative at $A$, $df(A)\colon H\rightarrow T_{f(A)}{
\mathcal{N}}$ is surjective. In such a case, for each $C\in {\mathcal{N}}$,
the fiber $f^{-1}(C)$ is a sub-manifold. Assigned a point $A\in f^{-1}(C)$, a
vector $U\in H$ is called vertical if it is tangent to the manifold $
f^{-1}(C)$. Each such a tangent vector $U$ is the velocity at $t=0$ of some
smooth curve $t\mapsto\gamma(t)$ with $\gamma(0)=A$ and $\dot{\gamma}(0)=U$.
Precisely, from $f(\gamma(t))=C$ for all $t$ we derive the characterization
of vertical vectors. We have $df(A)[\dot{\gamma}(0)]=0$ i.e., the tangent
space at $A$ is $T_{A}f^{-1}(f(A))=\operatorname{Ker}(df(A))$. The orthogonal space
to the tangent space $T_{A}f^{-1}(f(A))$ is called the space of horizontal vectors at $A$, 
\begin{equation*}
{\mathcal{H}}_{A}=\operatorname{Ker}(df(A))^{\perp}=\operatorname{Im}\left(
df(A)^{\ast}\right) \ .
\end{equation*}

Let us apply this argument to our specific case.
Let $\operatorname{GL}(n)\subset \operatorname{M}(n)$ be the open set of invertible matrices; $\operatorname{O}\left( n\right) $ the subgroup of $\operatorname{GL}(n)$ of orthogonal matrices; $\operatorname{Sym}^{\perp }\left( n\right) $ the
subspace of $\operatorname{M}(n)$ of anti-symmetric matrices. 
\begin{proposition}\label{prop:symcalculus}
\begin{enumerate}
\item For each given $A\in \operatorname{GL}(n)$ we have the orthogonal
splitting 
\begin{equation*}
\operatorname{M}(n)=\operatorname{Sym}\left( n\right) A\oplus \operatorname{Sym}^{\perp }\left(
n\right) (A^{\ast })^{-1}\ .
\end{equation*}
\item The mapping 
\begin{equation*}
\MMT \colon \operatorname{GL}(n)\ni A\mapsto AA^{\ast }\in \operatorname{Sym}^{++}\left(
n\right)
\end{equation*}
has derivative at $A$ given by $d_{X}\sigma (A)=XA^{\ast }+AX^{\ast }$. It is a submersion with fibers 
\begin{equation*}
\MMT^{-1}(C)=\setof{C^{1/2}R}{R\in\Oof n} \ .
\end{equation*}
\item The
kernel of the differential is 
\begin{equation*}
\operatorname{Ker}(d\sigma (A))=\operatorname{Sym}^{\perp }\left( n\right) (A^{\ast })^{-1}\ 
\end{equation*}
and its orthogonal complement, ${\mathcal{H}}_{A}=\operatorname{Ker}(d\sigma
(A))^{\perp },$ is 
\begin{equation*}
{\mathcal{H}}_{A}=\operatorname{Sym}\left( n\right) A.
\end{equation*}
\item The orthogonal projection of $X \in M(n)$ onto $\mathcal H_A$ is $\lyapunov {AA^*}{XA^*+AX^*}A$. 
 \end{enumerate}
\end{proposition}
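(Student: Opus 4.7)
The plan is to drive the whole proposition off the Lyapunov equation, exactly in the form already recalled in Section~\ref{sec:general-set-up}. I will begin with part (2), computing the derivative by the product rule: since $\MMT(A) = AA^*$ is bilinear in $(A,A^*)$, we immediately get $d_X \MMT(A) = XA^* + AX^*$. This lets me characterize the kernel (part (3), first half) directly: $X \in \ker(d\MMT(A))$ iff $XA^*$ is antisymmetric, so $X = T(A^*)^{-1}$ for some $T \in \asym n$, giving $\ker(d\MMT(A)) = \asym n (A^*)^{-1}$.

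Next I would establish the orthogonal splitting of part (1), which is the technical heart of the statement. Given $M \in \operatorname{M}(n)$, I want to write $M = SA + T(A^*)^{-1}$ with $S \in \sym n$ and $T \in \asym n$. Right-multiplying by $A^*$, this is equivalent to $MA^* = SAA^* + T$ with $T$ antisymmetric, i.e.\ $T + T^* = 0$, which unwinds to
\begin{equation*}
SAA^* + AA^*S = MA^* + AM^* \ .
\end{equation*}
Since $AA^* \in \ppsym n$, this Lyapunov equation has the unique solution $S = \lyapunov{AA^*}{MA^* + AM^*} \in \sym n$, and then $T := MA^* - SAA^*$ is automatically antisymmetric. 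Uniqueness of $S$ gives uniqueness of the decomposition. For orthogonality of the two summands, I would compute, for $S \in \sym n$ and $T \in \asym n$,
\begin{equation*}
\langle SA, T(A^*)^{-1}\rangle = \traceof{SA\,A^{-1} T^*} = \traceof{ST^*} = -\traceof{ST} = 0,
\end{equation*}
using the standard fact that the trace of a symmetric times an antisymmetric matrix vanishes.

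Part (3) is then finished: the orthogonal complement of $\asym n (A^*)^{-1}$ in $\operatorname{M}(n)$ must be $\sym n A$ by the splitting and the orthogonality. The submersion claim in (2) follows because $d\MMT(A)$ restricted to $\sym n A$ is the map $SA \mapsto SAA^* + AA^*S$, which is the Lyapunov operator $\mathcal L_{AA^*}$ (up to the identification $S \leftrightarrow SA$) and hence a bijection onto $\sym n$. For the fiber description, I would use polar decomposition: if $AA^* = C$, then $A = C^{1/2} R$ with $R := C^{-1/2}A$ orthogonal since $RR^* = C^{-1/2}AA^*C^{-1/2} = I$; conversely any $C^{1/2}R$ with $R \in \Oof n$ satisfies $\MMT(C^{1/2}R) = C$.

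Finally, part (4) is a corollary of the construction in the splitting: the projection of $X$ onto $\mathcal H_A = \sym n A$ is precisely the $SA$ component, with $S = \lyapunov{AA^*}{XA^* + AX^*}$, yielding the stated formula $\lyapunov{AA^*}{XA^* + AX^*}\,A$. I don't foresee a genuine obstacle; the only subtle point is recognizing that the decomposition of part (1) reduces to a Lyapunov equation for $AA^* \in \ppsym n$, after which every other assertion is a short consequence.
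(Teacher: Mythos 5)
Your proof is correct, and it rests on the same two ingredients as the paper's --- trace-orthogonality of $\sym n$ against $\asym n$, and unique solvability of the Lyapunov equation for $AA^*\in\ppsym n$ --- but it organizes them differently. The paper proves part (1) non-constructively in one line: $B\perp \sym n A$ iff $\traceof{BA^*C}=0$ for all $C\in\sym n$ iff $BA^*\in\asym n$, so $(\sym n A)^{\perp}=\asym n (A^*)^{-1}$ and the splitting follows from finite-dimensionality; the Lyapunov computation appears only in part (4), where the paper decomposes $X=CA+D(A^*)^{-1}$ and derives $C=\lyapunov{AA^*}{XA^*+AX^*}$. You front-load exactly that computation into part (1), proving existence and uniqueness of the decomposition by solving $SAA^*+AA^*S=MA^*+AM^*$, which then makes part (4) an immediate corollary --- more economical, since one computation serves both parts, at the cost of making (1) depend on Lyapunov solvability rather than on pure duality. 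You also do more than the paper in two places: you actually prove the fiber description $\MMT^{-1}(C)=\setof{C^{1/2}R}{R\in\Oof n}$ via polar decomposition, which the paper only asserts, and you obtain surjectivity of $d\MMT(A)$ from bijectivity of its restriction to $\sym n A$, whereas the paper exhibits the explicit preimage $\frac12 W(A^*)^{-1}$ of $W\in\sym n$. One terminological quibble: the restriction $SA\mapsto SAA^*+AA^*S$ is the \emph{inverse} of $\mathcal{L}_{AA^*}$ as the paper defines it (recall $\mathcal{L}_\Sigma[V]$ solves $X\Sigma+\Sigma X=V$), not $\mathcal{L}_{AA^*}$ itself; since both are linear bijections of $\sym n$, your argument is unaffected.
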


\begin{proof} We provide here the proof for sake of completeness. See also \cite{takatsu:2011osaka} and \cite{bhatia|jain|lim:2018}. 
\begin{enumerate}
\item If $\left\langle B,CA\right\rangle =0$, for all $C\in\sym n$ i.e., $CA \in \psym n A$ , then $\operatorname{Tr}\left( BA^{\ast }C\right) =0$, so that $
BA^{\ast }\in \operatorname{Sym}^{\perp }\left( n\right) $ that is, $B \in \asym n (A^*)^{-1}$.
\item Let the matrix $A$ be an element in the fiber manifold $\MMT^{-1}(AA^{\ast })$. The derivative of $\MMT$ at $A$, $X \mapsto XA^{\ast }+AX^{\ast }$, is surjective, because for each $
W\in \operatorname{Sym}\left( n\right) $ we have $d\MMT(A)\left[ \frac{1}{2}
W(A^{\ast })^{-1}\right] =W$. Hence $\MMT$ is a submersion and the fiber $\MMT^{-1}(AA^*)=\setof{(AA^*)^{1/2}R}{R \in \Oof n}$ is a sub-manifold of $\operatorname{GL}(n)$. 

\item Let us compute the splitting of $\Mof n$ into the kernel of $d\MMT(A)$ and its orthogonal: $\operatorname{M}(n)=\operatorname{Ker}(d\sigma (A))\oplus {\mathcal{H}}_{A}$. The vector space tangent to $\MMT^{-1}(AA^{\ast })$ at $A$ is the
kernel of the derivative at $A$: 
\begin{equation*}
\kerof{d\sigma (A)} =\left\{ X\in \operatorname{M}(n)|\text{ }XA^{\ast
}+AX^{\ast }=0\right\} = \left\{ X\in \operatorname{M}(n)|\text{ }(AX^{\ast })^{\ast }=-AX^{\ast }\right\} \ .
\end{equation*}
Therefore, $X\in \operatorname{Ker}(d\sigma (A))$ if, and only if, $AX^{\ast }\in \operatorname{
Sym}^{\perp }\left( n\right) $, i.e., $\operatorname{Ker}(d\sigma (A))=\operatorname{Sym}
^{\perp }\left( n\right) (A^{\ast })^{-1}$. We have just proved that this implies $\mathcal{H}_{A}= \sym n A$.
\item Consider the decomposition of $X$ into the horizontal and the vertical part:
$X = C A + D (A^*)^{-1}$ with $C \in \sym n$ and $D \in \asym n$.
By transposition, we get $X^* = A^* C - A^{-1} D$. From the previous two equations, we obtain the two equations $XA^* = C (AA^*) + D$ and $AX^* = (AA^*)C - D$. The sum of the two previous equations is $XA^* + AX^* = C(AA^*)+(AA^*)C$, which is a Lyapunov equation having solution $C = \lyapunov {AA^*} {XA^* + AX^*}$. It follows that the projection is $CA = \lyapunov {AA^*} {XA^* + AX^*}A$
\end{enumerate}
\end{proof}

\section{Wasserstein distance}\label{sec:quadr-diss-index}

The aim of this section is to discuss the Wasserstein distance for the Gaussian case as well as the equation for the associated metric geodesic. Most of its content is an exposition of known results.

\subsection{Block-Gaussian} 
Let us suppose that the dispersion matrix $\Sigma \in \psym {2n}$
is partitioned into $n\times n$ blocks, and consider random variables $X$ and $Y$ such that
\begin{equation*}
\begin{bmatrix}
X \\ 
Y
\end{bmatrix}
\sim \operatorname{N}_{2n}\left( \mu ,\Sigma \right) ,\quad \Sigma =
\begin{bmatrix}
\Sigma _{1} & K \\ 
K^{\ast } & \Sigma _{2}
\end{bmatrix}
\ ,
\end{equation*}
so that $K_{ij}=\operatorname{Cov}\left( X_{i},Y_{j}\right) $ if $i=1,\dots ,n$ and $
j=(n+1),\dots ,2n$. It follows that $K_{ij}^{2} \leq (\Sigma
_{1})_{ii}(\Sigma _{2})_{jj} \leq \frac12 \left((\Sigma
_{1})_{ii} + (\Sigma _{2})_{jj}\right)$, which in turn imply the bounds 
\begin{equation}\label{eq:ubo}
\left\Vert K\right\Vert _{2}^{2}\leq \operatorname{Tr}\left( \Sigma _{1}\right) 
\operatorname{Tr}\left( \Sigma _{2}\right)  \quad \text{and} \quad \sup_{ij}\left\vert K_{ij}\right\vert \leq \frac{1}{2}(\operatorname{Tr}\left(
\Sigma _{1}\right) +\operatorname{Tr}\left( \Sigma _{2}\right) ) \ .
\end{equation}

For mean vectors $\mu _{1},\mu _{2}\in {\reals}^{2}$ and
dispersion matrices $\Sigma _{1},\Sigma _{2}\in \operatorname{Sym}^{+}\left(
n\right) $, define the set of jointly Gaussian distributions with given
marginals to be 
\begin{equation*}
{\mathcal{G}}((\mu _{1},\Sigma _{1}),(\mu _{2},\Sigma _{2}))=\left\{ \operatorname{N}
_{2n}\left( 
\begin{bmatrix}
\mu _{1} \\ 
\mu _{2}
\end{bmatrix}
,
\begin{bmatrix}
\Sigma _{1} & K \\ 
K^{\ast } & \Sigma _{2}
\end{bmatrix}
\right) \right\} \ ,
\end{equation*}
and the Gini dissimilarity index  

\begin{multline} \label{eq:Gi}
W^{2}((\mu _{1},\Sigma _{1}),(\mu _{2},\Sigma _{2})) = \\
\inf\setof{\mathbb{E}\left[ \left\Vert X-Y\right\Vert ^{2}\right]}{\begin{bmatrix}
X \\ 
Y
\end{bmatrix}
\sim \gamma , \gamma \in {\mathcal{G}}((\mu _{1},\Sigma _{1}),(\mu
_{2},\Sigma _{2}))}
= \\ \left\Vert \mu _{1}-\mu _{2}\right\Vert ^{2}+\operatorname{Tr}\left( \Sigma
_{1}\right) +\operatorname{Tr}\left( \Sigma _{2}\right) -2\sup \setof{ \operatorname{Tr}
\left( K\right) }{
\begin{bmatrix}
\Sigma _{1} & K \\ 
K^{\ast } & \Sigma _{2}
\end{bmatrix}
\in \operatorname{Sym}^{+}\left( 2n\right)}  
\end{multline}
Actually, in view of either of the bounds in Eq.~\eqref{eq:ubo}, the set ${
\mathcal{G}}((\mu _{1},\Sigma _{1}),(\mu _{2},\Sigma _{2}))$ is compact and the $\inf$ is attained.

It is easy to verify that
\begin{equation*}
W((\mu_{1},\Sigma_{1}),(\mu_{2},\Sigma_{2}))=\sqrt{\min\setof{ \mathbb{E}
\left[ \left\Vert X-Y\right\Vert ^{2}\right] }{
\begin{bmatrix}
X \\ 
Y
\end{bmatrix}
\sim\gamma,\gamma\in{\mathcal{G}}((\mu_{1},\Sigma_{1}),(\mu_{2},\Sigma
_{2}))} }  \label{eq:D}
\end{equation*}
defines a distance on the space ${\mathcal{G}}_{n}\simeq{\reals}^{n}\times \operatorname{
Sym}^{+}\left( n\right) $. The symmetry of $W$ is clear as well as the triangle inequality, by considering Gaussian distributions on ${\reals}
^{n}\times{\reals}^{n}\times{\reals}^{n}$ with given marginals. To
conclude, assume that the $\min$ is reached at some $\overline{\gamma}$. Then 
\begin{equation*}
0=W((\mu_{1},\Sigma_{1}),(\mu_{2},\Sigma_{2}))=\mathbb{E}_{\overline{\gamma}}
\left[ \left\vert X-Y\right\vert ^{2}\right] \Leftrightarrow\mu_{1}=\mu
_{2}\quad{\text{and}}\quad\Sigma_{1}=\Sigma_{2}\ .
\end{equation*}

A further observation is that distance $W$ is homogeneous i.e., 
\begin{equation*}
W((\lambda\mu_{1},\lambda^{2}\Sigma_{1}),(\lambda\mu_{2},\lambda^{2}\Sigma
_{2}))=\lambda W((\mu_{1},\Sigma_{1}),(\mu_{2},\Sigma_{2})),\quad\lambda
\geq0\ .
\end{equation*}

\subsection{Computing the quadratic dissimilarity index}

We will present a proof as given by Dowson and Landau \cite
{dowson|landau:1982}, but with some corrections.

Given $\Sigma _{1},\Sigma _{2}\in \operatorname{Sym}^{+}\left( n\right) $, each admissible $K$'s in \eqref{eq:Gi} belongs to a compact set of $\operatorname{M}
(n) $ thanks to bound \eqref{eq:ubo}, so the maximum of the function $2
\operatorname{Tr}\left( K\right) $ is reached. Therefore, we are led to study the problem
\begin{equation} \label{eq:JJJ}
\left\{ \begin {aligned} \alpha &(\Sigma _1,\Sigma _2)=\max _{K\in
\operatorname {M}(n)}2\operatorname {Tr}\left (K\right )\\ &{\text {subject
to}}\\ &\Sigma = \begin {bmatrix} \Sigma _1&K\\ K^*&\Sigma _2\end {bmatrix}
\in \operatorname {Sym}^+\left (2n\right )\end {aligned}\right.
\end{equation}
The value of the similar problem with $\max$ replaced by $\min $ will be denoted by $\beta (\Sigma _{1},\Sigma _{2}).$

\begin{proposition}\ 
\label{prop:uno}
\begin{enumerate}
\item Let $\Sigma_{1},\Sigma_{2}\in\operatorname{Sym}^{+}\left( n\right) $. Then
\begin{equation*}
\alpha(\Sigma_{1},\Sigma_{2})=2\operatorname{Tr}\left( \left( \Sigma
_{1}^{1/2}\Sigma_{2}\Sigma_{1}^{1/2}\right) ^{1/2}\right) \text{ and }
\beta(\Sigma_{1},\Sigma_{2})=-\alpha(\Sigma_{1},\Sigma_{2})\ .
\end{equation*}
\item If moreover $\detof {\Sigma_1} > 0$, then
\begin{equation*}
\alpha(\Sigma_{1},\Sigma_{2}) = 2 \traceof{(\Sigma_1\Sigma_2)^{1/2}} \ .
\end{equation*}
\end{enumerate}
\end{proposition}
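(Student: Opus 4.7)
The plan is to convert the constrained optimization over PSD block matrices into an unconstrained optimization over contractions, and then invoke the variational characterization of the trace (nuclear) norm. First I would assume $\Sigma_1,\Sigma_2\in\ppsym n$ and recall the Schur-complement characterization: the block matrix in \eqref{eq:JJJ} lies in $\psym{2n}$ if and only if $\Sigma_2 - K^*\Sigma_1^{-1}K \succeq 0$. Substituting $T = \Sigma_1^{-1/2} K\, \Sigma_2^{-1/2}$, the constraint becomes $T^*T \preceq I$, i.e., $T$ is a contraction, and the objective rewrites as
\begin{equation*}
2\operatorname{Tr}(K) = 2\operatorname{Tr}\bigl(\Sigma_1^{1/2} T\, \Sigma_2^{1/2}\bigr) = 2\operatorname{Tr}(AT), \qquad A := \Sigma_2^{1/2}\Sigma_1^{1/2}.
\end{equation*}

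Next I would apply the trace-norm identity $\sup_{\|T\|_{\mathrm{op}}\le 1}\operatorname{Tr}(AT) = \sum_i \sigma_i(A)$, which follows at once from the SVD $A = U \Lambda V^*$: writing $S = V^*TU$ (still a contraction) gives $\operatorname{Tr}(AT) = \sum_i \sigma_i(A)\, S_{ii}$, so the maximum $\sum_i \sigma_i(A)$ is achieved at $T = VU^*$. Since the singular values of $A = \Sigma_2^{1/2}\Sigma_1^{1/2}$ are the square roots of the eigenvalues of $A^*A = \Sigma_1^{1/2}\Sigma_2\Sigma_1^{1/2}$, one obtains
\begin{equation*}
\alpha(\Sigma_1,\Sigma_2) = 2\operatorname{Tr}\bigl((\Sigma_1^{1/2}\Sigma_2\Sigma_1^{1/2})^{1/2}\bigr),
\end{equation*}
and $\beta(\Sigma_1,\Sigma_2) = -\alpha(\Sigma_1,\Sigma_2)$ follows by the symmetry $T \mapsto -T$.

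To handle the announced case of $\Sigma_1,\Sigma_2\in\psym n$ possibly singular, I would use a perturbation argument: replace $\Sigma_i$ by $\Sigma_i + \varepsilon I \in \ppsym n$, apply the result just proved, and let $\varepsilon \downarrow 0$, invoking continuity of the constraint set (in Hausdorff distance, using the bounds \eqref{eq:ubo}) and of the square-root map on $\psym n$. For part (2), assuming $\det\Sigma_1 > 0$, I would insert the identity \eqref{eq:sqrtAB},
\begin{equation*}
(\Sigma_1\Sigma_2)^{1/2} = \Sigma_1^{1/2}(\Sigma_1^{1/2}\Sigma_2\Sigma_1^{1/2})^{1/2}\Sigma_1^{-1/2},
\end{equation*}
and apply cyclicity of the trace to obtain $\operatorname{Tr}((\Sigma_1\Sigma_2)^{1/2}) = \operatorname{Tr}((\Sigma_1^{1/2}\Sigma_2\Sigma_1^{1/2})^{1/2})$, giving the second form of $\alpha$.

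The main potential obstacle is the passage to singular $\Sigma_1$, because the parametrization $K = \Sigma_1^{1/2} T\,\Sigma_2^{1/2}$ does not exhaust the admissible $K$'s when $\Sigma_1^{-1}$ fails to exist; the $\varepsilon$-regularization circumvents this cleanly. A secondary subtlety is that Dowson and Landau use a Lagrangian/KKT argument where the multiplier $\Lambda_1$ turns out to satisfy the Riccati equation $\Lambda_1 \Sigma_1 \Lambda_1 = \Sigma_2$ solved explicitly by \eqref{eq:ricc}; this provides an alternative derivation and is presumably where the stated ``corrections'' to Dowson--Landau enter, but the contraction-parametrization route above avoids any delicate handling of stationarity conditions.
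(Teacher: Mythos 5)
Your proof is correct, but it follows a genuinely different route from the paper's for the core case of nonsingular $\Sigma_1,\Sigma_2$. The paper follows (and corrects) Dowson--Landau: it writes the feasible block matrix as $SS^*$ with blocks $A,B$, sets up a Fritz John/Lagrangian formulation, derives the Riccati equations $\Lambda\Sigma_1\Lambda=\Sigma_2$, $\Gamma\Sigma_2\Gamma=\Sigma_1$ for the multipliers, and then selects among the many stationary points by a spectral sign analysis of $R=\Sigma_1^{1/2}\Lambda\Sigma_1^{1/2}$, with $R^2=\Sigma_1^{1/2}\Sigma_2\Sigma_1^{1/2}$ fixed; it must also rule out irregular points (the $\nu_0=0$ case) and justify via Claims 1--2 that the stationary matrix really is a multiplier. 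Your Schur-complement reduction to contractions $T^*T\preceq I$ plus the von Neumann--type identity $\max_{\|T\|_{\mathrm{op}}\le 1}\operatorname{Tr}\left(AT\right)=\sum_i\sigma_i(A)$ replaces all of that machinery with a direct, global optimality argument: no constraint-qualification issues, no enumeration of stationary points, and the sign symmetry $T\mapsto -T$ gives $\beta=-\alpha$ for free (the paper gets it from the $\varepsilon_k\equiv-1$ branch of the same spectral analysis). What the paper's longer route buys is explicit contact with the structure used immediately afterwards: the optimal multiplier $\Lambda=\Sigma_1^{-1/2}\left(\Sigma_1^{1/2}\Sigma_2\Sigma_1^{1/2}\right)^{1/2}\Sigma_1^{-1/2}$ is precisely the Riccati solution $T$ that furnishes the optimal coupling $K=\Sigma_1\Lambda$ in Proposition~\ref{prop:CFT}, whereas your optimizer $T=VU^*$ identifies the same $K$ only after an extra computation. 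Your handling of the singular case (monotone $\varepsilon$-regularization, compactness from Eq.~\eqref{eq:ubo}, continuity of the matrix square root) is the same in substance as the paper's appeal to Berge's maximum theorem, and your part (2) coincides with the paper's, both resting on Eq.~\eqref{eq:sqrtAB} and cyclicity of the trace.
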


\begin{proof}[point $(1)$]\ 
A symmetric matrix $\Sigma\in\operatorname{Sym}\left( 2n\right) $ is non-negative
defined if, and only if, it is of the form $\Sigma=SS^{\ast}$, with $S\in
\operatorname{M}\left( 2n\right) $. Given the block structure of $
\Sigma $ in \eqref{eq:JJJ}, we can write 
\begin{equation*}
\begin{bmatrix}
\Sigma_{1} & K \\ 
K^{\ast} & \Sigma_{2}
\end{bmatrix}
=
\begin{bmatrix}
A \\ 
B
\end{bmatrix}
\begin{bmatrix}
A^{\ast} & B^{\ast}
\end{bmatrix}
=
\begin{bmatrix}
AA^{\ast} & AB^{\ast} \\ 
BA^{\ast} & BB^{\ast}
\end{bmatrix}
,
\end{equation*}
where $A$ and $B$ are two matrices in $\operatorname{M}(n\times2n).$

Therefore, problem \eqref{eq:JJJ} becomes 
\begin{equation*}
\left\{ \begin {aligned} \alpha &(\Sigma _1,\Sigma _2)=\max _{A,B\in
\operatorname {M}(n\times 2n)}2\operatorname {Tr}\left (AB^*\right )\\
&{\text {subject to}}\\ &\Sigma _1=AA^*,\quad \Sigma _2=BB^*\end {aligned}
\right.  \label{eq:HKP}
\end{equation*}

We have already observed that the optimum exists, so the necessary
conditions of Lagrange theorem allows us to characterize this optimum.
However, the two constraints $\Sigma_{1}=AA^{\ast}$ and $\Sigma_{2}=BB^{
\ast} $ are not necessarily regular at every point (i.e., the Jacobian of
the transformation may fail to be of full rank at some point), so we must
take into account that the optimum could be an irregular point. To this
purpose, as a customary, we shall adopt Fritz John first-order formulation
for the Lagrangian (see \cite{mangasarian|fromovitz:1967}).

We shall initially assume that both $\Sigma_{1}$ and $\Sigma_{2}$ are
non-singular.

Let then $\left( \nu_{0},\Lambda,\Gamma\right) \in\left\{ 0,1\right\} \times
\operatorname{Sym}\left( n\right) \times\operatorname{Sym}\left( n\right) $, $\left(
\nu_{0},\Lambda,\Gamma\right) \neq\left( 0,0,0\right) $, where the symmetric
matrices $\Lambda$ and $\Gamma$ are the Lagrange multipliers. The Lagrangian
function will be 
\begin{align*}
L & =2\nu_{0}\operatorname{Tr}\left( AB^{\ast}\right) -\operatorname{Tr}\left( \Lambda
AA^{\ast}\right) -\operatorname{Tr}\left( \Gamma BB^{\ast }\right) \\
& =2\nu_{0}\operatorname{Tr}\left( AB^{\ast}\right) -\operatorname{Tr}\left( A^{\ast}\Lambda
A\right) -\operatorname{Tr}\left( B^{\ast}\Gamma B\right)
\end{align*}
The first-order conditions of $L$ lead to
\begin{equation} \label{eq:FIR}
\left\{ \begin {aligned} &\nu _{0}B=\Lambda A,\quad \nu _{0}A=\Gamma B\\
&\Sigma _1=AA^*,\quad \Sigma _2=BB^*\end {aligned}\right. \ .  
\end{equation}

In the case $\nu_{0}=1,$ i.e., the case of stationary regular points, Eq.~\eqref{eq:FIR} becomes 
\begin{equation} \label{eq:BBB}
\left\{ \begin {aligned} &B=\Lambda A,\quad A=\Gamma B\\ &\Sigma
_1=AA^*,\quad \Sigma _2=BB^*\end {aligned}\right. \ , 
\end{equation}
which in turn implies

\begin{equation} \label{eq:YUP}
\left\{ \begin {aligned} \Lambda \Sigma _{1}\Lambda &=\Sigma _{2}\\ \Gamma
\Sigma _{2}\Gamma &=\Sigma _{1}\end {aligned}\right. ,\quad\Lambda,\Gamma\in
\operatorname{Sym}\left( n\right)  
\end{equation}
and further 
\begin{equation*}
K=\Sigma_{1}\Lambda=\Gamma\Sigma_{2}.
\end{equation*}

Of course, Eqs.~\eqref{eq:YUP} could be more general than Eqs.~\eqref{eq:BBB} and thus possibly contain undesirable solutions. In this light, we
establish the following facts, in which both matrices $\Sigma _{1}$ and $
\Sigma _{2}$ must be nonsingular. Notice that in this case Eqs.~\eqref{eq:YUP} imply that both $\Lambda $ and $\Gamma $ are nonsingular as well.

\bigskip
\noindent\emph{Claim 1: If $(\Gamma,\Lambda)$ is a solution to \eqref{eq:YUP} and $\Lambda^{-1}=\Gamma$, then the couple $(\Gamma,\Lambda)$ are Lagrange
multipliers of Problem \eqref{eq:JJJ}.}

Actually, let $\Sigma _{1}=AA^{\ast }$, $A\in \operatorname{M}(n\times 2n)$ be any
representation of the matrix $\Sigma _{1}$. Define $B=\Lambda A$ so that $
A=\Lambda ^{-1}B=\Gamma B$. Moreover 
\begin{equation*}
BB^{\ast }=\Lambda AA^{\ast }\Lambda =\Lambda \Sigma _{1}\Lambda =\Sigma
_{2}\ ,
\end{equation*}
and so $\left( \Lambda ,\Gamma \right) $ are multipliers associated with the
feasible point $(A,B)$.

\bigskip
\noindent\emph{Claim 2: The set of solutions to \eqref{eq:YUP}, such that $
\Gamma^{-1}=\Lambda$, is not empty. In particular, there is a unique pair $
\left( \widetilde{\Lambda},\widetilde{\Gamma}\right) $ where both $
\widetilde{\Lambda}$ and $\widetilde{\Gamma}$ are positive definite}.

We have already observed that Eqs.~\eqref{eq:YUP} imply that $\Lambda$ and $
\Gamma$ are nonsingular. Moreover, we have $\Gamma^{-1}\Sigma_{1}\Gamma
^{-1}=\Sigma_{2}$. Recalling that Riccati's equation has one and only one
solution in the class of positive definite matrices, then $X=\Lambda
=\Gamma^{-1}$.

Now we proceed to study the solutions to $\Lambda\Sigma_{1}\Lambda=
\Sigma_{2} $ and we shall show that Eq \eqref{eq:YUP} has infinitely many
solutions. In correspondence to each one $\Lambda$, the value of the
objective function will be given by $2\operatorname{Tr}\left( K\right) =2\operatorname{Tr}
\left( \Sigma_{1}\Lambda\right) $. Therefore, we must select the matrix $
\Lambda$ such that ${\mathrm{Tr}}\left( \Sigma_{1}\Lambda\right) $ be
maximized.

Following \cite{dowson|landau:1982}, we define 
\begin{equation*}
R=\Sigma_{1}^{1/2}\Lambda\Sigma_{1}^{1/2}\in\operatorname{Sym}\left( n\right) \ ,
\end{equation*}
so that, in view of \eqref{eq:YUP}, we have 
\begin{equation} \label{eq:tre}
R^{2}=\Sigma_{1}^{1/2}\Lambda\Sigma_{1}^{1/2}\Sigma_{1}^{1/2}\Lambda\Sigma
_{1}^{1/2}=\Sigma_{1}^{1/2}\Lambda\Sigma_{1}\Lambda\Sigma_{1}^{1/2}=\Sigma
_{1}^{1/2}\Sigma_{2}\Sigma_{1}^{1/2}\in\operatorname{Sym}^{+}\left( n\right) \ .
\end{equation}

Moreover, 
\begin{equation*}
\operatorname{Tr}\left( R\right) =\operatorname{Tr}\left(
\Sigma_{1}^{1/2}\Lambda\Sigma_{1}^{1/2}\right) =\operatorname{Tr}\left( \Sigma
_{1}^{1/2}\Sigma_{1}^{1/2}\Lambda\right) =\operatorname{Tr}\left( \Sigma
_{1}\Lambda\right) =\operatorname{Tr}\left( K\right) \ .
\end{equation*}

Eq. \eqref{eq:tre} shows that, though the Lagrangian can have many rest
points (i.e., many solutions $\Lambda$) the matrix $R^{2}=\Sigma_{1}^{1/2}
\Sigma _{2}\Sigma_{1}^{1/2}\in\operatorname{Sym}^{+}\left( n\right) $ remains
constant. Not so the value of the objective function $\operatorname{Tr}\left(
K\right) =\operatorname{Tr}\left( R\right) $ which depends on $R$ (i.e., on $\Lambda$
).

Let 
\begin{equation*}
R^{2}=\sum_{k}\lambda _{k}E_{k}
\end{equation*}
denote the spectral decomposition of $R^{2}$, then the solutions to $R$ will
be 
\begin{equation*}
R=\sum_{k}\varepsilon _{k}\lambda _{k}^{1/2}E_{k}
\end{equation*}
with $\varepsilon _{k}=\pm 1$. Hence $\operatorname{Tr}\left( K\right) =\operatorname{Tr}
\left( R\right) $ will be maximized whenever $\varepsilon _{k}\equiv 1$ and
so $R\in \operatorname{Sym}^{+}\left( n\right) $. Clearly the objective function
will be minimized if $\varepsilon _{k}\equiv -1$. From now on the proof of
the $\min $ statement follows similarly.

Hence the maximum of the trace occurs at 
\begin{equation*}
R=\left( \Sigma_{1}^{1/2}\Sigma_{2}\Sigma_{1}^{1/2}\right) ^{1/2}\ ,
\end{equation*}
namely $\Lambda=\Sigma_{1}^{-1/2}\left( \Sigma_{1}^{1/2}\Sigma_{2}\Sigma
_{1}^{1/2}\right) ^{1/2}\Sigma_{1}^{-1/2}.$ Thanks to Claims 1-2 this matrix
is a multiplier of the Lagrangian and so we would have 
\begin{equation} \label{eq:DAT}
\alpha\left( \Sigma_{1},\Sigma_{2}\right) =2{\mathrm{Tr}}\left( \Sigma
_{1}^{1/2}\Sigma_{2}\Sigma_{1}^{1/2}\right) ^{1/2},  
\end{equation} 
as long as the optimum is attained at a regular point. In fact, to complete
the proof, we must still examine the case $\nu_{0}=0$, for which Eq. \eqref{eq:FIR} becomes 
\begin{equation*}
\Lambda A=0,\quad\Gamma B=0\ .
\end{equation*}
It follows 
\begin{align*}
\Lambda\Sigma_{1} & =\Lambda AA^{\ast}=0 \\
\Gamma\Sigma_{2} & =\Gamma BB^{\ast}=0\ ,
\end{align*}
and consequently $\Lambda=\Gamma=0$. Therefore there is no irregular point,
provided $\Sigma_{1}$ and $\Sigma_{2}$ are not singular matrices. So we have
proved the relation \eqref{eq:DAT} under the above assumptions.

Last step will be that of extending our result to possibly singular matrices $\Sigma
_{1} $ and $\Sigma _{2}$.

Given the two matrices $\Sigma_{1},\Sigma_{2}\in\operatorname{Sym}^{+}\left(
n\right) $, set 
\begin{equation*}
\Sigma_{1}\left( \varepsilon\right) =\Sigma_{1}+\varepsilon I_{n}{\text{ \
and }}\Sigma_{2}\left( \varepsilon\right) =\Sigma_{2}+\varepsilon I_{n}{
\text{, \ with }}\varepsilon\in\lbrack0,1]\ .
\end{equation*}
If $\varepsilon>0$, then 
\begin{equation*}
\operatorname{det}\left( \Sigma_{i}+\varepsilon I\right)
=\prod_{j=1}^{n}(\lambda_{i,j}+\varepsilon)>0,\quad i=1,2\ .
\end{equation*}
where $\lambda_{i,j}$, $j=1,\dots,n$ is a set of eigenvalues of $\Sigma_{i}$
, $i=1,2$. Let us consider the parametric programming problem 
\begin{equation*}
\left\{ \begin {aligned} \alpha &(\Sigma _1(\varepsilon ),\Sigma
_2(\varepsilon ))=\max _{K\in \operatorname {M}(n)}2\operatorname {Tr}\left
(K\right )\\ &{\text {subject to}}\\ & \begin {bmatrix} \Sigma
_1(\varepsilon )&K\\ K^*&\Sigma _2(\varepsilon )\end {bmatrix} \in
\operatorname {Sym}^+\left (2n\right )\end {aligned}\right.
\end{equation*}
Observe that the feasible region is contained in a compact set independent
of $\varepsilon\in\left[ 0,1\right] $ because of the bound \eqref{eq:ubo}.

Now the continuity of the optimal value $\varepsilon \mapsto \alpha (\Sigma
_{1}(\varepsilon ),\Sigma _{2}(\varepsilon ))$ follows easily from Berge
maximum theorem, see for instance \cite[Th. 17.31]{aliprantis|border:2006}.
Hence 
\begin{equation*}
\alpha (\Sigma _{1},\Sigma _{2})=\lim_{\varepsilon \rightarrow 0}\alpha
(\Sigma _{1}(\varepsilon ),\Sigma _{2}(\varepsilon ))=2\operatorname{Tr}\left(
(\Sigma _{1}^{1/2}\Sigma _{2}\Sigma _{1}^{1/2})^{1/2}\right)
\end{equation*}
and the assertion is proved for any $\Sigma _{1},\Sigma _{2}\in \operatorname{Sym}
^{+}\left( n\right) $.
\end{proof}

\begin{proof}[point $(2)$]
 From Eq.~\eqref{eq:sqrtAB} we have 

\begin{equation*}
\traceof{\left(\Sigma_1^{1/2} \Sigma_2 \Sigma_1^{1/2}\right)^{1/2}} =  \traceof{\Sigma_1^{1/2}\left(\Sigma_1^{1/2} \Sigma_2 \Sigma_1^{1/2}\right)^{1/2}\Sigma_1^{-1/2}} = \traceof{\left(\Sigma_1\Sigma_2\right)^{1/2}} \ .  
\end{equation*}

\end{proof}

The following result provides exact both lower and upper
bounds of $\mathbb{E}\left[ \left\Vert X-Y\right\Vert ^{2}\right] $.

\begin{proposition}
\label{prop:CFT}Let $X,Y$ be multivariate Gaussian random variables taking values in ${\reals}
^{n}$ and having means $\mu_{1}$ and $\mu_{2}$ and dispersion matrices $
\Sigma_{1}$ and $\Sigma_{2}$ respectively. Then 

\begin{multline*}
\left\Vert \mu_{1}-\mu_{2}\right\Vert ^{2}+\operatorname{Tr}\left( \Sigma
_{1}+\Sigma_{2}-2\left( \Sigma_{1}^{1/2}\Sigma_{2}\Sigma_{1}^{1/2}\right)
^{1/2}\right) \leq\mathbb{E}\left[ \left\Vert X-Y\right\Vert ^{2}\right] \leq
\\
\left\Vert \mu_{1}-\mu_{2}\right\Vert ^{2}+\operatorname{Tr}\left( \Sigma
_{1}+\Sigma_{2}+2\left( \Sigma_{1}^{1/2}\Sigma_{2}\Sigma_{1}^{1/2}\right)
^{1/2}\right) \ .
\end{multline*}
If $\det\Sigma_{1}\neq0$, then the extremal values are attained at the joint
distribution of 

\begin{multline*}
\begin{bmatrix}
X \\ 
\mu_{2}\pm T(X-\mu_{1})
\end{bmatrix}
\sim \\ \operatorname{N}_{2n}\left( 
\begin{bmatrix}
\mu_{1} \\ 
\mu_{2}
\end{bmatrix}
,
\begin{bmatrix}
\Sigma_{1} & \pm T\Sigma_{1} \\ 
\pm\Sigma_{1}T & \Sigma_{2}
\end{bmatrix}
\right) = \operatorname{N}_{2n}\left( 
\begin{bmatrix}
\mu_{1} \\ 
\mu_{2}
\end{bmatrix}
,
\begin{bmatrix}
\Sigma_{1} & \pm (\Sigma_2\Sigma_{1})^{1/2} \\ 
\pm (\Sigma_1\Sigma_2)^{1/2} & \Sigma_{2}
\end{bmatrix}
\right) \ ,
\end{multline*}
respectively, where $T\in\operatorname{Sym}^{+}\left( n\right) $ is the solution to
the Riccati equation $T\Sigma_{1}T=\Sigma_{2}$.
\end{proposition}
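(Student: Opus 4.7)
The plan is to reduce the bounds to Proposition~\ref{prop:uno} by expanding $\mathbb{E}[\normof{X-Y}^{2}]$ into a translation part plus a covariance-trace part, and then to exhibit an explicit attaining coupling in the non-degenerate case using the Riccati solution $T$.

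First, expanding $X - Y = (X-\mu_{1})-(Y-\mu_{2}) - (\mu_{2}-\mu_{1})$ and taking expectation gives
$$
\mathbb{E}[\normof{X-Y}^{2}] = \normof{\mu_{1}-\mu_{2}}^{2} + \traceof{\Sigma_{1}+\Sigma_{2}} - 2\traceof{K},
$$
where $K = \operatorname{Cov}(X,Y)$. Since $(X,Y)$ is jointly Gaussian, varying it over $\mathcal{G}((\mu_{1},\Sigma_{1}),(\mu_{2},\Sigma_{2}))$ amounts to varying $K$ subject to the joint covariance belonging to $\psym{2n}$, which is exactly the feasible set of problem~\eqref{eq:JJJ}. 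The lower and upper bounds on $\mathbb{E}[\normof{X-Y}^{2}]$ correspond, respectively, to the maximum and minimum of $2\traceof{K}$, which by Proposition~\ref{prop:uno} equal $\alpha(\Sigma_{1},\Sigma_{2}) = 2\traceof{(\Sigma_{1}^{1/2}\Sigma_{2}\Sigma_{1}^{1/2})^{1/2}}$ and $\beta(\Sigma_{1},\Sigma_{2}) = -\alpha(\Sigma_{1},\Sigma_{2})$, yielding the displayed inequalities.

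For attainment when $\det\Sigma_{1}\neq 0$, I would take $T\in\psym n$ to be the unique positive-definite solution of $T\Sigma_{1}T = \Sigma_{2}$ furnished by~\eqref{eq:ricc}, and set $Y_{\pm} = \mu_{2}\pm T(X-\mu_{1})$. Being affine in $X$, the pair $(X,Y_{\pm})$ is jointly Gaussian; one checks immediately that $\mathbb{E}[Y_{\pm}] = \mu_{2}$ and $\operatorname{Cov}(Y_{\pm}) = T\Sigma_{1}T = \Sigma_{2}$, so the marginals are correct, while $\operatorname{Cov}(X,Y_{\pm}) = \pm\Sigma_{1}T$. The identities $\Sigma_{1}T = (\Sigma_{1}\Sigma_{2})^{1/2}$ and $T\Sigma_{1} = (\Sigma_{2}\Sigma_{1})^{1/2}$, both consequences of~\eqref{eq:AB1}--\eqref{eq:AB2}, then yield the joint block-covariance displayed in the proposition, consistently with $(\Sigma_{1}T)^{*} = T\Sigma_{1}$ because $\Sigma_{1}$ and $T$ are symmetric.

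Finally, by cyclicity of the trace combined with the explicit form~\eqref{eq:ricc} of $T$, $\traceof{\pm\Sigma_{1}T} = \pm\traceof{(\Sigma_{1}^{1/2}\Sigma_{2}\Sigma_{1}^{1/2})^{1/2}} = \pm\tfrac12\alpha(\Sigma_{1},\Sigma_{2})$, so the two sign choices realize the lower and upper bounds respectively. There is no serious obstacle here: the delicate variational analysis has already been carried out in Proposition~\ref{prop:uno}, and the present statement is essentially its probabilistic reformulation together with a concrete optimal coupling via the Riccati operator.
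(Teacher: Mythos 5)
Your proposal is correct and takes essentially the same route as the paper: both reduce the bounds to Proposition~\ref{prop:uno} through the expansion in Eq.~\eqref{eq:Gi}, and both verify attainment with the coupling $Y = \mu_2 \pm T(X-\mu_1)$ built from the Riccati solution, invoking Eqs.~\eqref{eq:AB1}--\eqref{eq:AB2} for the second form of the joint covariance. Your write-up is merely somewhat more explicit in checking the marginals and cross-covariance of the coupling (note only the minor slip that $T$ need only lie in $\operatorname{Sym}^{+}(n)$, not be positive definite, when $\Sigma_2$ is singular).
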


\begin{proof} From Proposition \ref{prop:uno} and Eq.~\eqref{eq:Gi}, it
follows 
\begin{equation*}
\begin{aligned}
\min \left[ \left\Vert X-Y\right\Vert ^{2}\right] & =\left\Vert \mu _{1}-\mu
_{2}\right\Vert ^{2}+\operatorname{Tr}\left( \Sigma _{1}\right) +\operatorname{Tr}\left(
\Sigma _{2}\right) -2\operatorname{Tr}\left( \left( \Sigma _{1}^{1/2}\Sigma
_{2}\Sigma _{1}^{1/2}\right) ^{1/2}\right) \ , \\
\max \left[ \left\Vert X-Y\right\Vert ^{2}\right] & =\left\Vert \mu _{1}-\mu
_{2}\right\Vert ^{2}+\operatorname{Tr}\left( \Sigma _{1}\right) +\operatorname{Tr}\left(
\Sigma _{2}\right) +2\operatorname{Tr}\left( \left( \Sigma _{1}^{1/2}\Sigma
_{2}\Sigma _{1}^{1/2}\right) ^{1/2}\right) \ .
\end{aligned}
\end{equation*}

To check the extremal points it suffices to observe that, in view of
relation \eqref{eq:ricc}: 
\begin{equation*}
\operatorname{Tr}\left( T\Sigma _{1}\right) =\operatorname{Tr}\left( \Sigma
_{1}^{-1/2}\left( \Sigma _{1}^{1/2}\Sigma _{2}\Sigma _{1}^{1/2}\right)
^{1/2}\Sigma _{1}^{1/2}\right) =\operatorname{Tr}\left( \left( \Sigma
_{1}^{1/2}\Sigma _{2}\Sigma _{1}^{1/2}\right) ^{1/2}\right) .
\end{equation*}
Hence it is verified that the extremal values are attained at $Y=\mu _{2}\pm
T(X-\mu _{1})$. In the second form of the distribution we are using Eq.~\eqref{eq:AB1} and Eq.~\eqref{eq:AB2}.
\end{proof}

The $W$-distance defines on ${\reals} \times \ppsym n$ a metric geometry with geodesics. This result is due to \cite{mccann:2001}.

\begin{proposition}
\label{prop:geo}
The relation 
\begin{equation} \label{eq:GINI}
W\left( (\mu_{1},\Sigma_{1}),(\mu_{2},\Sigma_{2})\right) =\sqrt{\left\Vert
\mu_{1}-\mu_{2}\right\Vert ^{2}+\operatorname{Tr}\left( \Sigma_{1}+\Sigma
_{2}-2\left( \Sigma_{1}^{1/2}\Sigma_{2}\Sigma_{1}^{1/2}\right) ^{1/2}\right) 
}  
\end{equation}
defines a distance on ${\reals}^{n}\times\operatorname{Sym}^{+}\left( n\right) $. The geodesic from $(\mu _{1},\Sigma _{1})$ to $(\mu _{2},\Sigma _{2})$, with 
$(\mu _{1},\Sigma _{1}),(\mu _{2},\Sigma _{2})\in {\reals}^{n}\times \operatorname{Sym}^{++}\left( n\right)$, is the curve
\begin{equation*}\label{eq:GGG}
\Gamma \colon [0,1] \ni t \mapsto \left(\mu(t),\Sigma(t)\right) \ ,  
\end{equation*}
where $\mu(t) = (1-t)\mu _{1} + t\mu_{2}$ and

\begin{multline*}
\Sigma(t) = ((1-t)I+tT)\Sigma _{1}((1-t)I+tT) = \\ (1-t)^2 \Sigma_1 + t^2 \Sigma_2 + t(1-t)\left((\Sigma_1\Sigma_2)^{1/2} + (\Sigma_2\Sigma_1)^{1/2}\right) \ ,
\end{multline*}
and $T$ is the (unique) non-negative definite solution to the Riccati equation $T\Sigma _{1}T=\Sigma _{2}$. 
\end{proposition}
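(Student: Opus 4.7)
My plan is to split the proof into three pieces: the distance axioms for $W$, the verification that the two displayed expressions for $\Sigma(t)$ coincide, and the constant-speed geodesic identity
\begin{equation*}
W(\Gamma(s),\Gamma(t)) = (t-s)\,W(\Gamma(0),\Gamma(1)), \qquad 0 \le s \le t \le 1 \ .
\end{equation*}
Proposition~\ref{prop:uno} identifies the right-hand side of Eq.~\eqref{eq:GINI} with the square root of the dissimilarity index of Eq.~\eqref{eq:Gi}, so non-negativity, vanishing-iff-equal, and symmetry are immediate from the infimum formulation; the triangle inequality is the customary jointly-Gaussian gluing on $\reals^{3n}$ already indicated in Section~\ref{sec:quadr-diss-index}.

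To match the two forms of $\Sigma(t)$, I write $M(t) = (1-t)I + tT$ and expand
\begin{equation*}
M(t)\Sigma_1 M(t) = (1-t)^2\Sigma_1 + t(1-t)(T\Sigma_1 + \Sigma_1 T) + t^2 T\Sigma_1 T \ .
\end{equation*}
The Riccati equation converts $t^2 T\Sigma_1 T$ into $t^2\Sigma_2$, while Eqs.~\eqref{eq:AB1} and~\eqref{eq:AB2} applied with $A=\Sigma_1$, $B=\Sigma_2$ give $\Sigma_1 T = (\Sigma_1\Sigma_2)^{1/2}$ and $T\Sigma_1 = (\Sigma_2\Sigma_1)^{1/2}$, producing the alternative closed form.

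The heart of the argument is the geodesic identity, which I would establish by exhibiting an explicit Gaussian coupling via the McCann displacement map. Let $Z \sim \normalof n {\mu_1} {\Sigma_1}$ and put $X_\theta := \mu(\theta) + M(\theta)(Z-\mu_1)$ for $\theta \in [0,1]$. Symmetry of $M(\theta)$ ensures $X_\theta \sim \normalof n {\mu(\theta)}{\Sigma(\theta)}$, so $(X_s,X_t)$ is a jointly Gaussian element of $\mathcal{G}(\Gamma(s),\Gamma(t))$. The increment telescopes as
\begin{equation*}
X_t - X_s = (t-s)\bigl[(\mu_2-\mu_1) + (T-I)(Z-\mu_1)\bigr] \ ,
\end{equation*}
and expanding the second moment, together with $T\Sigma_1 T = \Sigma_2$ and the trace identity $\traceof{T\Sigma_1} = \traceof{(\Sigma_1^{1/2}\Sigma_2\Sigma_1^{1/2})^{1/2}}$ already used in the proof of Proposition~\ref{prop:CFT}, yields
\begin{equation*}
\mathbb{E}\bigl[\normof{X_t-X_s}^2\bigr] = (t-s)^2\bigl(\normof{\mu_1-\mu_2}^2 + \traceof{\Sigma_1+\Sigma_2-2(\Sigma_1^{1/2}\Sigma_2\Sigma_1^{1/2})^{1/2}}\bigr) \ ,
\end{equation*}
hence the upper bound $W(\Gamma(s),\Gamma(t)) \le (t-s) W(\Gamma(0),\Gamma(1))$.

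The matching lower bound is forced by applying the triangle inequality along $\Gamma(0) \to \Gamma(s) \to \Gamma(t) \to \Gamma(1)$ and substituting the upper bound just proved on the two extremal sub-intervals. The principal conceptual obstacle is identifying the correct interpolating coupling in the first place: once one realizes that the optimal transport map between the endpoints is affine with symmetric slope $T$ and that the displacement interpolation remains Gaussian precisely because $M(\theta)$ is symmetric (so that $M(\theta)\Sigma_1 M(\theta) = \Sigma(\theta)$ without any residual orthogonal factor), the remaining calculation reduces to the trace manipulations above and a one-line triangle-inequality argument.
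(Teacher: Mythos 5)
Your proposal is correct, but it follows a genuinely different route from the paper. The paper's proof is purely algebraic: it plugs $\Sigma(t)=M(t)\Sigma_1M(t)$ into the closed form \eqref{eq:GINI}, observes that $\Sigma_1^{1/2}\Sigma(t)\Sigma_1^{1/2}=\bigl(\Sigma_1^{1/2}M(t)\Sigma_1^{1/2}\bigr)^2$ so that its square root is explicit, and then reduces everything to traces of $\Sigma_1$, $\Sigma_2$ and $T\Sigma_1$, obtaining only the one-parameter identity $W(\Gamma(0),\Gamma(t))=t\,W(\Gamma(0),\Gamma(1))$ --- the Menger-convexity notion of geodesic that the paper explicitly adopts, deferring the stronger pairwise proportionality to the later Riemannian-submersion analysis. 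You instead build the McCann displacement coupling $X_\theta=\mu(\theta)+M(\theta)(Z-\mu_1)$, use it to get the upper bound $W(\Gamma(s),\Gamma(t))\le (t-s)W(\Gamma(0),\Gamma(1))$ for every pair $s\le t$ (your marginal and second-moment computations check out: joint Gaussianity of $(X_s,X_t)$, the telescoping increment, and $\traceof{(T-I)\Sigma_1(T-I)}=\traceof{\Sigma_1+\Sigma_2-2(\Sigma_1^{1/2}\Sigma_2\Sigma_1^{1/2})^{1/2}}$ are all right), and then force the matching lower bound by the triangle inequality along $\Gamma(0)\to\Gamma(s)\to\Gamma(t)\to\Gamma(1)$. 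This buys you, already at this stage, the stronger two-parameter statement $W(\Gamma(s),\Gamma(t))=(t-s)W(\Gamma(0),\Gamma(1))$ that the paper only promises for later, and it makes transparent \emph{why} the curve is a geodesic (it is the push-forward of a linear interpolation of optimal maps); the price is reliance on the triangle inequality and the coupling formalism, whereas the paper's computation is shorter and stays entirely inside the trace calculus. Your verification of the second closed form of $\Sigma(t)$ via Eqs.~\eqref{eq:AB1} and \eqref{eq:AB2} (giving $\Sigma_1T=(\Sigma_1\Sigma_2)^{1/2}$ and $T\Sigma_1=(\Sigma_2\Sigma_1)^{1/2}$) is exactly the identification the paper itself invokes in the proof of Prop.~\ref{prop:CFT}, and your treatment of the distance axioms coincides with the paper's discussion following Eq.~\eqref{eq:Gi}.
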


\begin{proof}
Clearly, $\Gamma (0)=\left( \mu _{1},\Sigma
_{1}\right) $ and $\Gamma (1)=\left( \mu _{2},\Sigma _{2}\right) $. Let us
compute the distance between $\Gamma (0)$ and the point 
\begin{equation*}
\Gamma (t)=(\mu (t),\Sigma (t))=\left( \mu _{1}+t(\mu _{2}-\mu
_{1}),((1-t)I+tT)\Sigma _{1}((1-t)I+tT)\right) .
\end{equation*}

We have 
\begin{equation*}
\begin{aligned}
\Sigma_{1}^{1/2}\Sigma(t)\Sigma_{1}^{1/2} &
=\Sigma_{1}^{1/2}((1-t)I+tT)\Sigma_{1}((1-t)I+tT)\Sigma_{1}^{1/2} \\
& =\left( \Sigma_{1}^{1/2}((1-t)I+tT)\Sigma_{1}^{1/2}\right) \left(
\Sigma_{1}^{1/2}((1-t)I+tT)\Sigma_{1}^{1/2}\right) \ ,
\end{aligned}
\end{equation*}
so that 
\begin{equation*}
\left( \Sigma_{1}^{1/2}\Sigma(t)\Sigma_{1}^{1/2}\right) ^{1/2}=\Sigma
_{1}^{1/2}((1-t)I+tT)\Sigma_{1}^{1/2}\ ,
\end{equation*}
and hence 
\begin{multline*}
\operatorname{Tr}\left( \left( \Sigma_{1}^{1/2}\Sigma(t)\Sigma_{1}^{1/2}\right)
^{1/2}\right) = \\ \operatorname{Tr}\left(
\Sigma_{1}^{1/2}((1-t)I+tT)\Sigma_{1}^{1/2}\right) =(1-t)\operatorname{Tr}\left(
\Sigma_{1}\right) +t\operatorname{Tr}\left( T\Sigma_{1}\right) \ .
\end{multline*}

We have
\begin{equation*}
\begin{aligned}
\operatorname{Tr}\left( \Sigma(t)\right) & =\operatorname{Tr}\left(
((1-t)I+tT)\Sigma_{1}((1-t)I+tT)\right) \\
& =(1-t)^{2}\operatorname{Tr}\left( \Sigma_{1}\right) +2t(1-t)\operatorname{Tr}\left(
T\Sigma_{1}\right) +t^{2}\operatorname{Tr}\left( \Sigma_{2}\right)
\end{aligned}
\end{equation*}

Collecting all the above results, 

\begin{multline*}
\operatorname{Tr}\left( \Sigma_{1}+\Sigma(t)-2\left(
\Sigma_{1}^{1/2}\Sigma(t)\Sigma_{1}^{1/2}\right) ^{1/2}\right) =\operatorname{Tr}
\left( \Sigma_{1}\right) + \\
(1-t)^{2}\operatorname{Tr}\left( \Sigma_{1}\right) +2t(1-t)\operatorname{Tr}\left(
T\Sigma_{1}\right) +t^{2}\operatorname{Tr}\left( \Sigma_{2}\right) -2(1-t)\operatorname{Tr}
\left( \Sigma_{1}\right) -2t\operatorname{Tr}\left( T\Sigma_{1}\right) = \\
t^{2}\operatorname{Tr}\left( \Sigma_{1}\right) +t^{2}\operatorname{Tr}\left(
\Sigma_{2}\right) -2t^{2}\operatorname{Tr}\left( T\Sigma_{1}\right) =t^{2}\operatorname{Tr}
\left( \Sigma_{1}+\Sigma_{2}-2\left(
\Sigma_{1}^{1/2}\Sigma_{2}\Sigma_{1}^{1/2}\right) ^{1/2}\right) \ .
\end{multline*}

In conclusion,

\begin{multline*}
W(\Gamma(0),\Gamma(t)) = \\
\sqrt{\left\Vert \mu(0)-\mu(t)\right\Vert ^{2}+\operatorname{Tr}\left(
\Sigma(0)+\Sigma(t)-2\left( \Sigma(0)^{1/2}\Sigma(t)\Sigma(0)^{1/2}\right)
^{1/2}\right) } = \\ tW(\Gamma(0),\Gamma(1))\ .
\end{multline*}
\end{proof}

We end this section by adding a few remarks.

In metric space, the definition of geodesic we use here is related to Merger convexity property, see \cite[p. 78]{papadopoulos:2014}. A stronger definition requires
the proportionality of the distance between couple of points on the curve,
i.e., 
\begin{equation*}
W\left( \Gamma (s),\Gamma (t)\right) =\left\vert t-s\right\vert W\left(
\Gamma (0),\Gamma (1)\right) ,
\end{equation*}
for $s,t\in \left[ 0,1\right] $. It will be proved later that in fact our
geodesics enjoy such a stronger property.

Clearly Proposition \ref{prop:geo} still
holds under the only assumption that $\Sigma_{1}$ is not singular, but
the case in which both the distributions are degenerate remains excluded.
 
The simplest example occurs when the two subspaces, $\operatorname{Range}\Sigma_{1}$
and $\operatorname{Range}\Sigma_{2}$, are orthogonal. In this case, for all joint distribution of the random vector $(X,Y),$ with marginals $
X\sim\operatorname{N}_{2}\left( 0,\Sigma_{1}\right) $ and $Y\sim \operatorname{N}_{2}\left(
0,\Sigma_{2}\right) ,$ the values of $X$ and $Y$ will lie into orthogonal
subspaces, so that $XY^{\ast}=0.$ Hence $\left\Vert X-Y\right\Vert
^{2}=\left\Vert X\right\Vert ^{2}+\left\Vert Y\right\Vert ^{2}$, and 
\begin{equation*}
\mathbb{E}\left\Vert X-Y\right\Vert ^{2}=\mathbb{E}\left\Vert X\right\Vert
^{2}+\mathbb{E}\left\Vert Y\right\Vert ^{2}=\operatorname{Tr}\left(
\Sigma_{1}\right) +\operatorname{Tr}\left( \Sigma_{2}\right) .
\end{equation*}
So any joint distribution $(X,Y)$ attains the optimal value $\sqrt {\operatorname{Tr}
\left( \Sigma_{1}\right) +\operatorname{Tr}\left( \Sigma_{2}\right) }.$

If we now define $X(t)=(1-t)X+tY$, then 
\begin{equation*}
\mathbb{E}\left[ \left\Vert X-X(t)\right\Vert ^{2}\right] =\mathbb{E}\left[
t^{2}\left\Vert X-Y\right\Vert ^{2}\right] =t^{2}\left[ \operatorname{Tr}\left(
\Sigma _{1}\right) +\operatorname{Tr}\left( \Sigma _{2}\right) \right] ,
\end{equation*}
consequently $X(t)$ is the geodesic joining the two random vectors $X$ and $Y$.

The previous example can be extended by taking two singular matrices 
\begin{equation*}
\Sigma _{1}=\sigma _{1}^{2}vv^{\ast }\text{ and }\Sigma _{2}=\sigma
_{2}^{2}ww^{\ast }
\end{equation*}
where $v\neq w\in \reals^{n}$ and $\left\Vert v\right\Vert =\left\Vert
w\right\Vert =1$. Clearly, $\operatorname{Range}\Sigma _{1}\cap \operatorname{Range}
\Sigma _{2}=\left\{ 0\right\} $ and they are one-dimensional spaces spanned
by vectors $v$ and $w$, respectively (it is not restrictive to assume $
v^{\ast }w\geq 0$, too). By Eq.~\eqref{eq:GINI}, 
\begin{equation*}
G\left( \Sigma _{1},\Sigma _{2}\right) =\sqrt{\sigma _{1}^{2}+\sigma
_{2}^{2}-2\sigma _{1}\sigma _{2}v^{\ast }w}.
\end{equation*}
Despite singularity of these matrices, it can be directly found the point
realizing the minimum in \eqref{eq:Gi}, which is the singular matrix
in $\operatorname{Sym}^{+}\left( 2n\right) $: 
\begin{equation*}
\left[ 
\begin{array}{cc}
\sigma _{1}^{2}vv^{\ast } & \sigma _{1}\sigma _{2}vw^{\ast } \\ 
\sigma _{1}\sigma _{2}wv^{\ast } & \sigma _{2}^{2}ww^{\ast }
\end{array}
\right] =\left[ 
\begin{array}{c}
\sigma _{1}v \\ 
\sigma _{2}w
\end{array}
\right] \left[ 
\begin{array}{cc}
\sigma _{1}v^{\ast } & \sigma _{2}w^{\ast }
\end{array}
\right] .
\end{equation*}

\section{Wasserstein Riemannian geometry}
\label{Sect:WASS}

We have seen how to compute the geodesic for the distance $W$. Since the component $\reals^n$ carries the standard Euclidean geometry, we focus on the geometry of the matrix part, i.e., we
shall restrict our analysis to 0-mean distributions $\operatorname{N}_{n}\left(0,\Sigma \right)$. Moreover, $\Sigma$ will be assumed to be positive definite. Our purpose is to endow the open set $\ppsym n$ with a structure of Riemannian manifold whose metric tensor generates the Wasserstein distance. The Riemannian metric is obtained by pushing forward the Euclidean geometry of square matrices to the space of dispersion matrices via the mapping $\MMT \colon A \mapsto AA^* = \Sigma$. This approach has been introduced by F.~Otto \cite{otto:2001} in the general non-parametric case and developed in the Gaussian case by A.~Takatsu \cite{takatsu:2011osaka} and R.~Bhatia \cite{bhatia|jain|lim:2018}.

In view of Prop.~\ref{prop:symcalculus}, $\MMT\colon\operatorname{GL}
(n)\rightarrow\operatorname{Sym}^{++}\left( n\right) \subset\operatorname{M}(n)$ is a submersion and  ${\mathcal{H}}_{A}=\operatorname{Sym}\left( n\right) A$ is the space of
horizontal vectors at $A$.

We recall that a submersion $f\colon \operatorname{GL}(n)\rightarrow \operatorname{Sym}
^{++}\left( n\right) $ is called Riemannian if for all $A$ the differential restricted to horizontal vectors
\begin{equation*}
\left.df(A)\right|_{{\mathcal{H}}_{A}}\colon {\mathcal{H}}_{A}\rightarrow T_{f(A)}\operatorname{Sym}^{++}\left(
n\right) = \sym n
\end{equation*}
is an isometry i.e., 
\begin{equation}\label{eq:RiemannianSubmersion}
U,V\in {\mathcal{H}}_{A}\Rightarrow \left\langle
df(A)[U],df(A)[V]\right\rangle _{f(A)}=\left\langle U,V\right\rangle \ .
\end{equation}

A linear isometry is always 1-to-1 and, if it is onto, we can write backward that 
\begin{equation*}
X,Y\in T_{f(A)}\operatorname{Sym}^{++}\left( n\right) \Rightarrow \left\langle
X,Y\right\rangle _{f(A)}=\left\langle \left( \left. df(A)\right\vert _{{
\mathcal{H}}_{A}}\right) ^{-1}X,\left( \left. df(A)\right\vert _{{\mathcal{H}
}_{A}}\right) ^{-1}Y\right\rangle \ .
\end{equation*}
Conversely, the previous equation provides the definition of a metric on $\ppsym n$ for which the submersion $f$ is Riemannian.

If $U_A$ is the projection of $U$ on $\mathcal H_A$, then $df(A)[U] = df(A)[U_A]$ and Eq.~\eqref{eq:RiemannianSubmersion} becomes
\begin{multline*}
U,V\in\sym n\Rightarrow \left\langle
df(A)[U],df(A)[V]\right\rangle _{f(A)} = \\ \scalarat{f(A)}{df(A)[U_A]}{df(A)[V_A]} = \left\langle U_A,V_A\right\rangle \ .
\end{multline*}

In general, a submersion induces a local diffeomorphisms from horizontal spaces to the image manifold. In our case, the submersion $\sigma$ provides a global parameterization of the manifold of symmetric matrices. Fix a matrix $A\in \operatorname{GL}(n)$ such that $\MMT
(A)=AA^{\ast }=\Sigma $, and consider the open convex cone 
\begin{equation*}
{\mathcal{H}}_{A}^{++}=\operatorname{Sym}^{++}\left( n\right) A\subset {\mathcal{H}}
_{A}.  \label{eq:KKK}
\end{equation*}
We denote by $\MMT _{A}$ the restriction to ${\mathcal{H}}_{A}^{++}$ of $
\MMT $.

\begin{proposition}
For all $A \in \GLof n$, the mapping 
\begin{equation*}
\MMT_{A}\colon{\mathcal{H}}_{A}^{++}\ni B\mapsto BB^{\ast}=C\in \operatorname{Sym}
^{++}\left( n\right)
\end{equation*}
is a surjective bijection, with inverse 
\begin{equation*}
\MMT_{A}^{-1}(C) = C^{-1/2}(C^{1/2}\Sigma
C^{1/2})^{1/2}C^{-1/2}A\ .
\end{equation*}
\end{proposition}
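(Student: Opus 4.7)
The plan is to reduce the proposition to the Riccati-equation result established at the beginning of Section~\ref{sec:general-set-up}, by using a convenient change of variables on $\mathcal{H}_A^{++}$.

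First I would unpack the domain. Since $A \in \operatorname{GL}(n)$ is invertible, every $B \in \mathcal{H}_A^{++} = \operatorname{Sym}^{++}(n)A$ admits a unique representation $B = SA$ with $S \in \operatorname{Sym}^{++}(n)$ (explicitly $S = BA^{-1}$, which is symmetric and positive definite precisely because $B$ lies in the cone $\operatorname{Sym}^{++}(n)A$). In particular, $S \mapsto SA$ is itself a bijection $\operatorname{Sym}^{++}(n) \to \mathcal{H}_A^{++}$. Substituting this parametrization into the definition of $\MMT$ yields
\[
\MMT_A(B) = BB^{\ast} = S(AA^{\ast})S = S\Sigma S, \qquad \Sigma := AA^{\ast},
\]
so the equation $\MMT_A(B) = C$, for given $C \in \operatorname{Sym}^{++}(n)$, becomes the Riccati equation $S\Sigma S = C$ in the unknown $S \in \operatorname{Sym}^{++}(n)$.

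The second step is simply to invoke the unique solvability of the Riccati equation already proved in Eq.~\eqref{eq:ricc} (together with the equivalent closed forms in Eqs.~\eqref{eq:AB1}--\eqref{eq:AB2}). Both existence and uniqueness of $S \in \operatorname{Sym}^{++}(n)$ transfer verbatim to existence and uniqueness of $B = SA \in \mathcal{H}_A^{++}$, which establishes that $\MMT_A$ is a surjective bijection. The explicit inverse is then obtained by multiplying the Riccati solution by $A$ on the right, namely $\MMT_A^{-1}(C) = SA$, and a short algebraic manipulation — using the square-root identity~\eqref{eq:sqrtAB} to move between the $\Sigma^{1/2}(\cdot)\Sigma^{1/2}$ and $C^{1/2}(\cdot)C^{1/2}$ conjugations — casts this in the displayed form.

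No substantive obstacle is expected, since once the parametrization $B = SA$ is made everything is a direct consequence of material already proved. The only place requiring care is bookkeeping which of $\Sigma$ and $C$ plays the role of the coefficient matrix $A$ in Eq.~\eqref{eq:ricc}, and consistently applying the equivalent square-root identities to match the symmetric form of the inverse displayed in the statement.
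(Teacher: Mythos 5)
Up to its final step your argument is exactly the paper's own proof: write $B=SA$ with $S=BA^{-1}\in\operatorname{Sym}^{++}(n)$, so that $\MMT_{A}(B)=C$ becomes the Riccati equation $S\Sigma S=C$, and deduce bijectivity of $\MMT_{A}$ from the existence and uniqueness of its positive-definite solution. That reduction and the bijectivity conclusion are correct, and they coincide with the paper's argument.

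The genuine gap is precisely the step you deferred as ``bookkeeping which of $\Sigma$ and $C$ plays the role of the coefficient matrix'': carried out, it shows the displayed inverse cannot be reached. In the equation $S\Sigma S=C$ the coefficient matrix is $\Sigma$ and the right-hand side is $C$, so Eq.~\eqref{eq:ricc} together with Eq.~\eqref{eq:AB1} gives
\begin{equation*}
S=\Sigma^{-1/2}\left(\Sigma^{1/2}C\Sigma^{1/2}\right)^{1/2}\Sigma^{-1/2}=\Sigma^{-1}(\Sigma C)^{1/2},
\qquad
\MMT_{A}^{-1}(C)=SA=(C\Sigma)^{1/2}(A^{\ast})^{-1},
\end{equation*}
the last form by Eq.~\eqref{eq:AB2} and $\Sigma^{-1}A=(A^{\ast})^{-1}$. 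The matrix in the statement, $C^{-1/2}(C^{1/2}\Sigma C^{1/2})^{1/2}C^{-1/2}=C^{-1}(C\Sigma)^{1/2}$, is by the same token the unique positive-definite solution of the \emph{swapped} equation $SCS=\Sigma$; no application of Eq.~\eqref{eq:sqrtAB} identifies the two, since they agree only when $C=\Sigma$. Already for $n=1$, $A=a>0$, $\Sigma=a^{2}$, $C=c$: the Riccati solution gives $B=\sqrt{c}$, with $BB^{\ast}=c$ as required, while the displayed formula gives $B=a^{2}/\sqrt{c}$, with $BB^{\ast}=a^{4}/c\neq c$. So the ``short algebraic manipulation'' you invoke does not exist: the statement's formula has $C$ and $\Sigma$ in exchanged roles. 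You are in good company---the paper's own proof asserts the same false identification, claiming that $C^{-1/2}(C^{1/2}\Sigma C^{1/2})^{1/2}C^{-1/2}$ solves $(BA^{-1})\Sigma(BA^{-1})^{\ast}=C$---but a proof that actually performed the bookkeeping would have caught this misprint rather than reproduced it.
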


\begin{proof}
For each $C\in \operatorname{Sym}^{++}\left( n\right) $,
the equation 
\begin{equation*}
C=BB^{\ast }=(BA^{-1}A)(BA^{-1}A)^{\ast }=(BA^{-1})\Sigma (BA^{-1})^{\ast }
\end{equation*}
is a Riccati equation for $BA^{-1}$. As $B \in \operatorname{
Sym}^{++}\left( n\right) A$, we have $BA^{-1}\in \operatorname{Sym}^{++}\left(
n\right) $ and 
\begin{equation*}
BA^{-1}=C^{-1/2}(C^{1/2}\Sigma C^{1/2})^{1/2}C^{-1/2}
\end{equation*}
is the unique solution.
\end{proof}

We come now to the point, i.e., the construction of a metric based on horizontal vectors at a given matrix $\Sigma$. We are here using  Prop.~\ref{prop:symcalculus}.

\begin{proposition}
\label{prop:NOV} The inner product 
\begin{equation*}\label{eq:NOV}
\left\langle U,V\right\rangle _{\Sigma }\equiv W_{\Sigma }(U,V)= \traceof{\lyapunov \Sigma U \Sigma \lyapunov \Sigma V},\quad
U,V\in \operatorname{Sym}\left( n\right) ,
\end{equation*}
defines a metric on $\ppsym n$ such that $\MMT \colon A\mapsto AA^{\ast}$ is a Riemannian submersion.
\end{proposition}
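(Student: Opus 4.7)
\medskip
\noindent\textbf{Proof plan.} The plan is to verify two things: (i) the bilinear form $W_\Sigma$ is a well-defined, symmetric, positive-definite inner product on $\operatorname{Sym}(n)$ varying smoothly with $\Sigma$, and (ii) the isometry condition \eqref{eq:RiemannianSubmersion} holds for $\MMT$ when horizontal vectors at $A$ are equipped with the Frobenius inner product inherited from $\operatorname{M}(n)$.

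\smallskip
For (i), bilinearity is immediate from the linearity of $\mathcal{L}_\Sigma$. Symmetry in $(U,V)$ follows because both $P=\mathcal{L}_\Sigma[U]$ and $Q=\mathcal{L}_\Sigma[V]$ are symmetric, so $\operatorname{Tr}(P\Sigma Q)$ is invariant under transposition and cyclic rotation. For positive-definiteness I would exploit Eq.~\eqref{eq:onehalf} to rewrite the definition as $W_\Sigma(U,U)=\tfrac12\operatorname{Tr}(\mathcal{L}_\Sigma[U]\,U)$, substitute $U=P\Sigma+\Sigma P$, and use cyclicity to arrive at $\operatorname{Tr}(\Sigma^{1/2}P^{2}\Sigma^{1/2})$, which is manifestly non-negative as the squared Frobenius norm of $\Sigma^{1/2}P$; equality forces $P=0$, hence $U=0$ by uniqueness of the Lyapunov solution. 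Smoothness of $\Sigma\mapsto W_\Sigma$ is inherited from the smoothness of the Lyapunov solver recorded in Eq.~\eqref{eq:DEDE}.

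\smallskip
For (ii), I would pick any $A\in\operatorname{GL}(n)$ with $\MMT(A)=AA^{\ast}=\Sigma$ and use Prop.~\ref{prop:symcalculus}(3) to parameterize horizontal vectors at $A$ as $PA$ and $QA$ with $P,Q\in\operatorname{Sym}(n)$. A direct application of $d\MMT(A)[X]=XA^{\ast}+AX^{\ast}$ yields
\begin{equation*}
d\MMT(A)[PA]=PAA^{\ast}+AA^{\ast}P=P\Sigma+\Sigma P,
\end{equation*}
and likewise for $Q$, so that $\mathcal{L}_\Sigma\bigl[d\MMT(A)[PA]\bigr]=P$ and $\mathcal{L}_\Sigma\bigl[d\MMT(A)[QA]\bigr]=Q$. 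The two sides of \eqref{eq:RiemannianSubmersion} then collapse to the same expression: the Frobenius pairing of the horizontal vectors is
\begin{equation*}
\langle PA,QA\rangle=\operatorname{Tr}\bigl(PA(QA)^{\ast}\bigr)=\operatorname{Tr}(PAA^{\ast}Q)=\operatorname{Tr}(P\Sigma Q),
\end{equation*}
using $Q^{\ast}=Q$, while the metric applied to their images gives $W_\Sigma(P\Sigma+\Sigma P,\,Q\Sigma+\Sigma Q)=\operatorname{Tr}(P\Sigma Q)$ by definition. Well-definedness with respect to the choice of lift $A$ is automatic, since the final value depends only on $\Sigma$.

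\smallskip
There is no real obstacle here: the entire argument is a sequence of trace manipulations, all of which have been prepared in Section~\ref{sec:general-set-up}. The only step requiring a small trick is the positivity in (i), where Eq.~\eqref{eq:onehalf} is the right device to convert the indefinite-looking definition into a manifest sum of squares; everything else is bookkeeping.
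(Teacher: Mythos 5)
Your proposal is correct and is essentially the paper's own argument run in the opposite direction: the paper constructs the metric by pushing the Frobenius product forward through the inverse $\left(\left. d\MMT(A)\right|_{{\mathcal{H}}_{A}}\right)^{-1}(U)=\lyapunov{AA^*}{U}A$ and observes that the resulting expression $\traceof{\lyapunov{AA^*}{U}\,AA^*\,\lyapunov{AA^*}{V}}$ depends only on $AA^*=\Sigma$, while you take the formula as given and verify the isometry identity $\langle PA,QA\rangle=\operatorname{Tr}(P\Sigma Q)=W_\Sigma(P\Sigma+\Sigma P,\,Q\Sigma+\Sigma Q)$ for horizontal vectors $PA,QA$ — which is the same computation with the substitution $P=\lyapunov{\Sigma}{U}$, resting on the same key facts from Prop.~\ref{prop:symcalculus}. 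Your explicit positive-definiteness check, rewriting $W_\Sigma(U,U)$ via Eq.~\eqref{eq:onehalf} as $\traceof{\Sigma^{1/2}P^2\Sigma^{1/2}}\geq 0$ with equality forcing $P=0$ and hence $U=0$, is sound and makes explicit a point the paper gets for free from the pushforward construction.
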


\begin{proof}
Let $X\in \operatorname{M}(n)$ and consider the decomposition of $X=X_{V}+X_{H}$
with $X_{V}$ vertical at $A$ and $X_{H}$ horizontal at $A$. Then $d\MMT
(A)[X]=d\MMT (A)[X_{H}]$ and the restriction of the derivative $d\MMT
(A) $ to the vector space ${\mathcal{H}}_{A}$ of horizontal vectors at $A$
is 1-to-1 onto the tangent space of $\operatorname{Sym}^{++}\left( n\right) $ at $
AA^{\ast }$, that is, $\operatorname{Sym}\left( n\right) $. For such a restriction, for each $H\in {\mathcal{H}}_{A},$ 

\begin{multline*}
\left. U=d\MMT (A)[H]=HA^{\ast }+AH^{\ast }=HA^{-1}AA^{\ast
}+A(HA^{-1}A)^{\ast }\right. \\
\left. =(HA^{-1})AA^{\ast }+AA^{\ast }(HA^{-1})^{\ast }=(HA^{-1})AA^{\ast
}+AA^{\ast }(HA^{-1})\ ,\right.
\end{multline*}
so that the inverse mapping of the restriction is given by 
\begin{equation} \label{eq:ASR}
H=\left( \left. d\MMT (A)\right\vert _{{\mathcal{H}}_{A}}\right) ^{-1}(U)=
\mathcal{L}_{AA^{\ast }}[U]A\ ,  
\end{equation}
Let us push-forward the inner product from ${\mathcal{H}}_{A}$ to $
T_{AA^{\ast}}\operatorname{Sym}^{++}\left( n\right) $.

From Eq.~\eqref{eq:ASR}, we have

\begin{multline*}
\left. W_{AA^{\ast }}(U,V)=\left\langle \left( \left. d\MMT (A)\right\vert
_{{\mathcal{H}}_{A}}\right) ^{-1}(U),\left( \left. d\MMT (A)\right\vert _{{
\mathcal{H}}_{A}}\right) ^{-1}(V)\right\rangle =\right. \\
\left. \left\langle \mathcal{L}_{AA^{\ast }}[U]A,\mathcal{L}_{AA^{\ast
}}[V]A\right\rangle =\operatorname{Tr}\left( \mathcal{L}_{AA^{\ast }}[U]AA^{\ast }
\mathcal{L}_{AA^{\ast }}[V]\right) .\right.
\end{multline*}
which depends on $AA^{\ast }=\Sigma $ only.
\end{proof}

Next proposition provides a useful tensorial form of Wasserstein Riemannian metric.

\begin{proposition}\label{prop:altWmetric}
  It holds 
\begin{equation*}
W_{\Sigma }(U,V) = \frac{1}{2}\left\langle \mathcal{L}_{\Sigma }[U],V\right\rangle \equiv
\left\langle \mathcal{L}_{\Sigma }[U],V\right\rangle _{2}.
\end{equation*}
\end{proposition}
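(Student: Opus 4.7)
The strategy is to avoid any new calculation by reducing the identity to the diagonal computation already carried out in Eq.~\eqref{eq:onehalf}. Two routes are available; I would present the direct one first.

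\textbf{Direct route.} Starting from the definition $W_\Sigma(U,V) = \traceof{\lyapunov \Sigma U\, \Sigma\, \lyapunov \Sigma V}$, I would substitute for $\Sigma \lyapunov \Sigma V$ using the Lyapunov equation \eqref{eq:LIA}, namely $V = \lyapunov \Sigma V\, \Sigma + \Sigma\, \lyapunov \Sigma V$. This gives
\begin{equation*}
\traceof{\lyapunov \Sigma U\, V} \;=\; \traceof{\lyapunov \Sigma U\, \lyapunov \Sigma V\, \Sigma} \;+\; \traceof{\lyapunov \Sigma U\, \Sigma\, \lyapunov \Sigma V}.
\end{equation*}
The second term on the right is $W_\Sigma(U,V)$ by definition. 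For the first term, cyclicity of the trace rewrites it as $\traceof{\Sigma\, \lyapunov \Sigma U\, \lyapunov \Sigma V} = \traceof{\lyapunov \Sigma V\, \Sigma\, \lyapunov \Sigma U} = W_\Sigma(V,U)$. Since $W_\Sigma$ is manifestly symmetric in $(U,V)$ (again by trace cyclicity, using that $\lyapunov \Sigma U$ and $\lyapunov \Sigma V$ are symmetric), the two terms coincide. Therefore $\traceof{\lyapunov \Sigma U\, V} = 2\, W_\Sigma(U,V)$, which is exactly the claim once one recognises the Frobenius inner product.

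\textbf{Polarization route.} Alternatively, Eq.~\eqref{eq:onehalf} already establishes $W_\Sigma(H,H) = \tfrac12\traceof{\lyapunov \Sigma H\, H}$. Both sides of the desired identity are symmetric bilinear forms in $(U,V)$ on $\sym n$, so the polarization identity $B(U,V) = \tfrac14(B(U+V,U+V) - B(U-V,U-V))$ applied to each side reduces the general case to the diagonal one, which is \eqref{eq:onehalf}. The bilinearity of $V \mapsto \lyapunov \Sigma V$ and trace linearity make this immediate.

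\textbf{Main obstacle.} There is essentially no obstacle: the content is a bookkeeping exercise with the Lyapunov equation and the cyclic property of the trace. The only subtle point, but really only a verification, is the symmetry $W_\Sigma(U,V) = W_\Sigma(V,U)$, needed to identify the two trace terms produced by substituting the Lyapunov equation; it follows from $\traceof{\lyapunov \Sigma U\, \Sigma\, \lyapunov \Sigma V} = \traceof{\lyapunov \Sigma V\, \lyapunov \Sigma U\, \Sigma}$ by cyclicity, combined with symmetry of all three matrices.
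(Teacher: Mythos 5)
Your direct route is correct and is essentially the paper's own proof run in reverse: the paper writes $W_\Sigma(U,V)$ as the semi-sum of $\traceof{\lyapunov \Sigma U \Sigma \lyapunov \Sigma V}$ and $\traceof{\lyapunov \Sigma U \lyapunov \Sigma V \Sigma}$ (equal by cyclicity and transpose-invariance of the trace together with symmetry of the matrices) and then collapses the sum via the Lyapunov equation $\lyapunov \Sigma V \Sigma + \Sigma \lyapunov \Sigma V = V$, whereas you expand $\traceof{\lyapunov \Sigma U V}$ with that same equation and identify the two resulting traces --- identical ingredients, same level of rigor. Your polarization alternative via Eq.~\eqref{eq:onehalf} would also work (after checking that both bilinear forms are symmetric), but it is not needed.
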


\begin{proof}
We have 
\begin{equation*}
\operatorname{Tr}\left( \mathcal{L}_{\Sigma}[U]\Sigma \mathcal{L}_{\Sigma}
[V]\right) =\operatorname{Tr}\left( \mathcal{L}_{\Sigma }[V]\Sigma \mathcal{L}
_{\Sigma }[U]\right) =\operatorname{Tr}\left( \mathcal{L}_{\Sigma }[U]\mathcal{L}
_{\Sigma }[V]\Sigma \right) \ ,
\end{equation*}
and, taking the semi-sum of the first and the last term of the previous equation,
\begin{equation*}
W_{\Sigma }(U,V) = \frac{1}{2}\operatorname{Tr}\left\{ \mathcal{L}_{\Sigma }[U]
\left[ \mathcal{L}_{\Sigma }[V]\Sigma +\Sigma \mathcal{L}_{\Sigma }[V]\right]
\right\} = \frac{1}{2}\operatorname{Tr}\left\{ \mathcal{L}_{\Sigma }[U]V\right\}  \ .
\end{equation*}
\end{proof}

After having shown in Prop.~\ref{prop:geo} the existence of a metric geodesic for the Wasserstein distance,
connecting a pair of matrices $\Sigma _{1},\Sigma _{2} \in \ppsym n$, we prove that the same curve is the Wasserstein
Riemannian geodesic, see R.J.~McCann  \cite{mccann:1997} and also \cite{takatsu:2011osaka,bhatia|jain|lim:2018}. More generally, we now discuss the existence of affine horizontal surfaces in $\GLof n$ and the existence of geodesically convex surfaces in $\ppsym n$. As a particular case, the result gives rise to the desired Riemannian geodesics.

A surface $\theta \mapsto A(\theta) \in\GLof n$, with $\theta \in \Theta$ and $\Theta$ open subset of $\reals^n$, is called horizontal for the submersion $\sigma \colon A \mapsto AA^*$, if $\partial/\partial{\theta_j} A(\theta) \in \mathcal H_{A(\theta)}$ for each $j$ and $\theta$, i.e.,
\begin{equation}\label{eq:geodesicx}
\left(\partiald {\theta_j} A(\theta)\right) A(\theta)^{-1} \in \sym n \ .
\end{equation}
A surface is horizontal if, and only if, every smooth curve which lies in it is horizontal.

\begin{proposition}\label{prop:horizontal-surf}
\begin{enumerate}
\item The surface $\Theta \ni \theta \mapsto A(\theta) \in \GLof n$ is horizontal for $\sigma$ if, and only if, 
\begin{equation}\label{eq:geodesicy}
\partiald {\theta_j} A^*(\theta) A(\theta)=A^*(\theta) \partiald {\theta_j} A(\theta), \quad j=1,\dots,k\ , \quad \theta \in \Theta \ .
\end{equation}
\item Let 
\begin{equation}\label{eq:geodesicz}
A(\theta) = A_0 + \sum_{i=1}^k \theta_i (A_i - A_0) \ , \quad \theta \in \Theta \ ,
\end{equation}
be a surface in $\GLof n$ with the $k$-simplex of $\reals^k$ contained in $\Theta$. The surface is horizontal if, and only if,
\begin{equation*}\label{eq:geodesicS}
A^*_j A_i = A^*_i A_j \ , \quad i,j=0,\dots,k \ .
\end{equation*}
\item \label{prop:horizontal-surf3} Let be given $\Sigma_0,\Sigma_1 \in \ppsym n$ and choose $A_0,A_1$ such that  $\Sigma_0 = A_0A_0^*$ and $\Sigma_1 = A_1A_1^*$. The line
\begin{equation}\label{eq:horizontal-line}
A(\theta) = (1-\theta) A_0+\theta A_1 
\end{equation}
is horizontal for $\theta$ in an open interval containing 0 and 1 if, and only if, $A_1 = TA_0$ with $T \in \ppsym n$. This implies $T$ is the solution of the Riccati equation $T\Sigma_0T = \Sigma_1$.
\item \label{prop:horizontal-surf4} Let be given $\Sigma_j = A_jA_j^*\in \ppsym n$, $j=0,1\dots,k$. The surface 
\begin{equation*}
\theta \mapsto A_0 + \sum_{j=0}^k \theta_k (A_j-A_0) 
\end{equation*}
is horizontal in an open set of parameters containing the $k$-simplex if, and only if, $A_i = T_{ij} A_j$ with $T_{ij} \in \ppsym n$, $i,j=0,\dots,k$.

\end{enumerate}
\end{proposition}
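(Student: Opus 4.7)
\textbf{Proof plan for Prop.~\ref{prop:horizontal-surf}.}

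For \emph{part (1)}, the plan is purely algebraic: by Prop.~\ref{prop:symcalculus} we have $\mathcal{H}_{A}=\sym n\cdot A$, so the defining condition \eqref{eq:geodesicx} says that $(\partial_{\theta_j}A)A^{-1}$ is symmetric. Transposing this identity and multiplying by $A^{*}$ on the left and $A$ on the right immediately yields \eqref{eq:geodesicy}, and the converse is the same manipulation run backwards.

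For \emph{part (2)}, I would substitute $A(\theta)=A_0+\sum_i\theta_i(A_i-A_0)$ into \eqref{eq:geodesicy} with $\partial_{\theta_j}A=A_j-A_0$. Expanding both sides produces a polynomial identity in $\theta$: the constant term gives $(A_j-A_0)^{*}A_0=A_0^{*}(A_j-A_0)$, equivalently $A_j^{*}A_0=A_0^{*}A_j$; the coefficient of $\theta_i$ gives $(A_j-A_0)^{*}(A_i-A_0)=(A_i-A_0)^{*}(A_j-A_0)$, which after using the previous identity reduces to $A_j^{*}A_i=A_i^{*}A_j$. Running this backwards recovers the horizontal condition for every $\theta$.

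For \emph{part (3)}, specialize part (2) with $k=1$: horizontality on any open interval around $\{0,1\}$ is equivalent to $A_1^{*}A_0=A_0^{*}A_1$. Set $T:=A_1A_0^{-1}$. Multiplying the identity on the right by $A_0^{-1}$ yields $A_1^{*}=A_0^{*}T$, so $T^{*}=A_0^{-*}A_1^{*}=A_0^{-*}A_0^{*}T=T$, i.e.\ $T$ is symmetric. From $\Sigma_1=A_1A_1^{*}=T\Sigma_0T$ we see $T$ solves the Riccati equation. To pin down positive definiteness, I use that the line must stay in $\GLof n$ on an open interval containing $[0,1]$: writing $A(\theta)=\bigl((1-\theta)I+\theta T\bigr)A_0$, invertibility is equivalent to $(1-\theta)+\theta\lambda_i\neq 0$ for every eigenvalue $\lambda_i$ of $T$ and for every such $\theta$; any negative $\lambda_i$ would create a singularity at $\theta=1/(1-\lambda_i)\in(0,1)$, which is excluded. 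Thus all eigenvalues of $T$ are positive, and the converse direction is immediate since a convex combination of $I$ and $T\in\ppsym n$ is positive definite on a neighborhood of $[0,1]$, and then $T$ symmetric implies $A_i^{*}A_j=A_j^{*}A_i$ as above.

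For \emph{part (4)}, I reduce to part (3) edge by edge. If the surface is horizontal on an open set containing the simplex, then each edge joining $A_i$ to $A_j$ (obtained by restricting to the affine line $\theta_i+\theta_j=1$ with the other $\theta_\ell=0$, or for $j=0$ simply the line from $A_0$ to $A_j$) is itself a horizontal line living in $\GLof n$ on an open interval containing $[0,1]$, so part (3) yields $T_{ij}=A_iA_j^{-1}\in\ppsym n$. Conversely, given $T_{ij}\in\ppsym n$ for all pairs, the pairwise symmetry $A_i^{*}A_j=A_j^{*}A_i$ follows exactly as in part (3), so by part (2) the surface is horizontal; moreover any convex combination can be written $\bigl(\sum\lambda_j T_{j0}\bigr)A_0$ with $\lambda_j\geq 0,\ \sum\lambda_j=1,\ T_{00}=I$, which is the product of two invertible matrices, and invertibility persists on a neighborhood of the simplex by openness of $\ppsym n$.

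The only delicate point is the positivity assertion in part (3): the Riccati equation alone yields only symmetry of $T$ and doesn't distinguish the $\ppsym n$ solution from the other sign-choices of eigenvalues. The key observation making the argument go through is that the requirement that the \emph{whole segment} lie in $\GLof n$ forces the spectrum of $T$ to be strictly positive, and this is what singles out the Riccati solution in $\ppsym n$ among all symmetric solutions.
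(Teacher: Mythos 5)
Your proposal is correct and follows essentially the same route as the paper's proof: the same transposition argument for part (1), extraction of the pairwise symmetry conditions $A_i^*A_j=A_j^*A_i$ from the affine dependence on $\theta$ in part (2), and in part (3) the identical key step that invertibility of $\left((1-\theta)I+\theta T\right)A_0$ along $[0,1]$ forces the spectrum of the symmetric matrix $T=A_1A_0^{-1}$ to be positive, hence $T\in\ppsym n$ solves the Riccati equation. Your edge-by-edge reduction in part (4) simply spells out what the paper dismisses as ``the same arguments as in the 2-points case,'' so no substantive difference there either.
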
 

\begin{proof}
\begin{enumerate}
\item Eq.~\eqref{eq:geodesicx} is equivalent to $A^*(\theta)^{-1}\partial / \partial {\theta_j} A^*(\theta) = \partial / \partial {\theta_j} A(\theta)A(\theta)^{-1}$ hence to $\partial / \partial {\theta_j} A^*(\theta) A(\theta) = A^*(\theta) \partial / \partial {\theta_j} A(\theta)$.
\item For the surface in Eq.~\eqref{eq:geodesicz} we have $\partial/\partial {\theta_j} A(\theta) = A_j$ so that Eq.~\eqref{eq:geodesicy} becomes
\begin{equation*}
A_j^*(\theta) A(\theta)=A^*(\theta) A_j(\theta), \quad j=1,\dots,k\ , \quad \theta \in \Theta \ .
\end{equation*}
If $\theta = 0$, it holds $A_j^*A_0 = A_0^*A_j$, $j = 1,\dots,k$. If $\theta=e_i$ then it holds $A_j^* A_i = A_i A_j^*$ for $i,j=1,\dots,k$. The converse holds by linearity. 
\item Assume $\theta \mapsto A(\theta)$ of Eq.~\eqref{eq:horizontal-line} is horizontal on $\Theta$. Then, from the previous item we know $A_1^*A_0 = A_0^* A_1$. In turn, this implies ${A_0^*}^{-1}A_1^* = A_1 A_0^{-1}$, hence $T = A_1A_0^{-1} \in \sym n$. It follows $T \Sigma_0 T = A_1A_0^{-1} \Sigma_0 (A_0^*)^{-1} A_1^* = \Sigma_1$. It remains to show that $T$ is positive definite. Actually, it holds
\begin{equation*}
(1 - \theta) A_0 + \theta A_1 = \left((1 - \theta) I + \theta T\right)A_0 \in \GLof n \ , \quad \theta \in \Theta \ .
\end{equation*}
If $\lambda_i$ are eigenvalues of the matrix $T$, then the eigenvalues of the matrix $(1 - \theta) I + \theta T$ are $(1-\theta) + \theta \lambda_i$. As they are never zero for any $\theta\in [0,1]$, it follows that no $\lambda_i$ can be negative. The $\lambda_i$ are not zero by assumption and the conclusion $T \in \ppsym n$ follows. 

Conversely, if $T \in \ppsym n$ and $TA_0  = A_1$, then $A_1^* A_0 = A_0^* T A_0$ is symmetric. Consequently, for all $\theta$ such that $(1-\theta)A_0+\theta A_1 \in \GLof n$ the curve is horizontal. On the other hand, $(1-\theta)I + \theta T$ is the convex combination of positive definite matrices then it is positive definite on an open interval containing $[0,1]$.

\item The proof follows exactly the same arguments as in the 2-points case of the previous item.
\end{enumerate}
\end{proof}

We conclude by discussing the existence of the geodetic surfaces that have been characterized in the previous proposition. The result shows that there is equality between the metric geodesic derived from the Wasserstein distance and the the geodesic we obtain from the submersion argument. Moreover, we characterize the existence of geodesically convex surfaces with given vertices.

\begin{corollary}
\begin{enumerate}
\item 
Given $\Sigma_0, \Sigma_1 \in \ppsym n$, there exists an open interval $\Theta \supset [0,1]$ such that the curve
\begin{equation}\label{eq:geodesic-new}
\Sigma(\theta) = \left((1-\theta)I + \theta T\right) \Sigma_0 \left((1-\theta)I + \theta T\right) \ , \quad \theta \in \Theta \ ,
\end{equation}
is the Wasserstein Riemannian geodesic through $\Sigma_0$ and $\Sigma_1$, with $T\Sigma_0T = \Sigma_1$.
\item Let $\Sigma_0, \dots,\Sigma_k \in \ppsym n$, there exists an open set $\Theta$ containing the $k$-simplex such that the surface
\begin{equation*}\label{eq:geodesic-neww}
\Sigma(\theta) = \left(I + \sum_{j=1}^k\theta (T_j-I)\right) \Sigma_0 \left(I + \sum_{j=1}^k\theta (T_j-I)\right) \ , \quad \theta \in \Theta \ ,
\end{equation*}
is the Wasserstein Riemannian geodesic surface through $\Sigma_0, \dots,\Sigma_k$ if, and only if, the matrices $T_j$, which are the positive definite solution of the Riccati equations $T_j \Sigma_0T_j$, $j=1,\dots,k$, pairwise commute.
\end{enumerate}
\end{corollary}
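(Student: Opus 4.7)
The approach is to exploit that $\MMT\colon \GLof n \to \ppsym n$ is a Riemannian submersion by Proposition~\ref{prop:NOV}, where the total space $\GLof n$ carries the flat Euclidean metric and therefore its geodesics are straight lines and its totally geodesic submanifolds are affine subspaces. Horizontal affine lifts thus project under $\MMT$ to totally geodesic surfaces in $\ppsym n$ for the Wasserstein metric, and both parts of the corollary reduce to producing an affine horizontal lift via Proposition~\ref{prop:horizontal-surf}.

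For part 1, I would set $A_0 = \Sigma_0^{1/2}$ and $A_1 = TA_0$, where $T\in\ppsym n$ solves the Riccati equation $T\Sigma_0 T=\Sigma_1$. Then $A_0A_0^* = \Sigma_0$ and $A_1A_1^* = \Sigma_1$, and Proposition~\ref{prop:horizontal-surf}~(\ref{prop:horizontal-surf3}) yields an open interval $\Theta\supset[0,1]$ on which $A(\theta)=((1-\theta)I+\theta T)A_0$ is horizontal and $\GLof n$-valued. Because $A(\theta)$ is a straight line in $M(n)$ it is a geodesic of the flat total space; being horizontal, its image $\Sigma(\theta) = \MMT(A(\theta)) = ((1-\theta)I+\theta T)\Sigma_0((1-\theta)I+\theta T)$ is a Wasserstein geodesic, which matches the metric-geodesic formula of Proposition~\ref{prop:geo}.

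For part 2, the parallel strategy is to set $A_0 = \Sigma_0^{1/2}$ and $A_j = T_jA_0$, where $T_j\in\ppsym n$ solves $T_j\Sigma_0 T_j = \Sigma_j$, and to consider the affine lift $A(\theta) = (I+\sum_{j=1}^{k}\theta_j(T_j-I))A_0$, for which $\MMT(A(\theta))=\Sigma(\theta)$. By Proposition~\ref{prop:horizontal-surf}~(\ref{prop:horizontal-surf4}) this affine surface is horizontal on an open neighbourhood of the $k$-simplex if and only if $A_i = T_{ij}A_j$ with $T_{ij}\in\ppsym n$ for every $i,j$. Since $T_{ij}=A_iA_j^{-1}=T_iT_j^{-1}$, the requirement that $T_iT_j^{-1}\in\ppsym n$ amounts precisely to the pairwise commutativity $T_iT_j=T_jT_i$: symmetry of $T_iT_j^{-1}$ is equivalent to commutativity after multiplying by $T_j$, and positive-definiteness is then automatic because two commuting positive-definite matrices are simultaneously diagonalizable and have positive product eigenvalues.

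The main obstacle is the converse direction in part 2: if $\Sigma(\theta)$ is a Wasserstein-geodesic surface, I must show that the specific affine lift $A(\theta)$ above is horizontal, so that Proposition~\ref{prop:horizontal-surf}~(\ref{prop:horizontal-surf4}) can be invoked to force the commutativity conclusion. My plan is to combine two observations. First, the unique horizontal lift through $A_0$ of each edge $\Sigma_0\to\Sigma_j$ is, by part 1, the line $t\mapsto((1-t)I+tT_j)A_0$, pinning every $A_j=T_jA_0$ onto the horizontal lift of the whole surface. Second, a geodesically convex surface in $\ppsym n$ has a horizontal lift that is totally geodesic in the flat total space $\GLof n$, hence an affine subspace of $M(n)$; this affine lift must therefore coincide with the $k$-dimensional affine hull of $\{A_0,\dots,A_k\}$, namely with $A(\theta)$ itself, which is consequently horizontal.
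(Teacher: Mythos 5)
Your core argument is correct and is essentially the paper's own proof: both parts lift the data to $\GLof n$ by $A_0=\Sigma_0^{1/2}$ (the paper takes $A_0=\Sigma_0^{1/2}U$ with $U\in\Oof n$, which changes nothing) and $A_j=T_jA_0$ with $T_j$ the Riccati solutions, invoke Prop.~\ref{prop:horizontal-surf} for horizontality of the affine lift, and project straight lines of the flat total space through the Riemannian submersion $\MMT$ of Prop.~\ref{prop:NOV}; in part 2 your identification $T_{ij}=T_iT_j^{-1}$ and the equivalence of symmetry of $T_iT_j^{-1}$ with $T_iT_j=T_jT_i$ (positive definiteness then being automatic) is exactly the paper's computation.

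The divergence is your treatment of the ``only if'' direction of part 2, and there your outline has a genuine gap. You assume that a geodesically convex surface in $\ppsym n$ admits a horizontal lift which is totally geodesic in the flat space $\Mof n$, hence affine. That claim is unproven, and it is not a standard fact: for a submersion only \emph{curves} are guaranteed to possess (unique) horizontal lifts, whereas a surface of dimension two or more need not admit any horizontal lift at all, because the horizontal distribution $A\mapsto\sym n\,A$ is not integrable --- for constant symmetric $S,S'$ the horizontal fields $A\mapsto SA$ and $A\mapsto S'A$ have Lie bracket $(S'S-SS')A$, which is horizontal only if $S'S=SS'$, since $S'S-SS'$ is anti-symmetric. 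The paper never needs such a claim: in its framework the geodesic surfaces under discussion are, per the discussion preceding the corollary, precisely the projections of the horizontal affine surfaces characterized in Prop.~\ref{prop:horizontal-surf}, so Item~\ref{prop:horizontal-surf4} delivers both implications at once, the ``only if'' direction being simply the statement that the given lift $\theta\mapsto\bigl(I+\sum_{j}\theta_j(T_j-I)\bigr)A_0$ fails to be horizontal when some pair $T_i,T_j$ does not commute. Under your stronger, geodesic-convexity reading of the statement, the converse would require a different argument from the one you sketch.
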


\begin{proof}
\begin{enumerate}
\item Pick $A_0 = \Sigma_0^{1/2} U$, with $U \in \Oof n$, and $A_1 = TA_0$, where $T$ is the positive definite solution of the Riccati equation $T\Sigma_0T = \Sigma_1$ and so $A_1A_1^* = \Sigma_1$. By Prop.~\ref{prop:horizontal-surf}, Item~\ref{prop:horizontal-surf3}, $\theta \mapsto A(\theta)$ is horizontal in $\GLof n$.  Consequently, $\Sigma(\theta)=A(\theta)A^*(\theta)$ is a geodesic.
\item In view of Prop.~\ref{prop:horizontal-surf}, Item~\ref{prop:horizontal-surf4}, $T_{ij} = T_iT_j^{-1}$. The surface is horizontal if, and only if, each $T_{ij}$ is symmetric, that is, $T_iT_j^{-1} = T_j T_i^{-1}$, which, in turn, is equivalent to $T_iT_j = T_jT_i$.
\end{enumerate}
\end{proof}
Unlike the two-points case, the commutativity condition puts severe restrictions on the set of matrices $\Sigma_0$,...,$\Sigma_k$ generating a geodesic surface, when $k>1$. For instance, if $\Sigma_0=I$, then we have $T_i=\Sigma_i^{1/2}$. Hence, Corollary 9 entails that the matrices $I$,$\Sigma_1$,...,$\Sigma_k$ generate a geodesic surface if, and only if, they pairwise commute.  
 \section{Wasserstein Riemannian exponential\label{sec:riem-expon}}

We aim now at reformulating a Riemannian geodesic in terms of the exponential
map. In other words, the purpose is that of writing the geodesic arc passing through a given
point and having a given velocity at the point itself.

The velocity of the geodesic of Eq.~\eqref{eq:geodesic-new} is 
\begin{equation*}
\dot{\Sigma}(\theta)=(T-I)\Sigma_{0}+\Sigma_{0}(T-I)+2\theta(T-I)\Sigma_{0}(T-I) \ .
\end{equation*}

Using the horizontal lift $\Sigma(\theta) = A(\theta)A^*(\theta)$, the velocity turns out to be
\begin{equation*}
\dot\Sigma(\theta) = \dot A(\theta) A^*(\theta) + A(\theta) {\dot A}^*(\theta) = \dot A(\theta) A^{-1}(\theta) \Sigma(\theta) + \Sigma(\theta) {A}^*(\theta)^{-1} {\dot A}^*(\theta) \ ,
\end{equation*}
where $\dot A(\theta) A^{-1}(\theta) \in \sym n$ by  Eq.~\eqref{eq:geodesicx}. Therefore,
\begin{equation*}
\dot A(\theta) A^{-1}(\theta) = {A}^*(\theta)^{-1} {\dot A}^*(\theta) = \lyapunov {\Sigma(\theta)}{\dot \Sigma(\theta)} \ .
\end{equation*}
In particular, the initial velocity is 
\begin{equation}\label{eq:ML}
\dot{\Sigma}(0)=(T-I)\Sigma(0)+\Sigma(0)(T-I) \ .  
\end{equation}
and $T - I = \lyapunov {\Sigma(0)}{\dot \Sigma(0)}$.

Let us compute the norm of the velocity in the Riemannian metric. The value of $W^2(\dot\Sigma,\dot\Sigma)$ at $\Sigma(\theta)$ is

\begin{multline*}
\traceof{\lyapunov {\Sigma(\theta)}{\dot \Sigma(\theta)}\Sigma(\theta)\lyapunov {\Sigma(\theta)}{\dot \Sigma(\theta)}} = \\ \traceof{\dot A(\theta) A^{-1}(\theta) A(\theta)A^*(\theta) {A}^*(\theta)^{-1} {\dot A}^*(\theta)} = \\ \traceof{\dot A(\theta){\dot A}^*(\theta)} = \traceof{(T-I)\Sigma(0)(T-I)} \ .
\end{multline*}
It is constant, as we expect from the definition by isometric submersion. Also, we can confirm that the length of the geodesic is 

\begin{multline*}
\sqrt{\traceof{(T-I)\Sigma(0)(T-I)}} = \sqrt{\traceof{\Sigma_0 + \Sigma_1 + T\Sigma_0 + \Sigma_0T}} = \\ \sqrt{\traceof{\Sigma_0 + \Sigma_1 + 2(\Sigma_0^{1/2}\Sigma_1\Sigma_0^{1/2})^{1/2}}} \ .
\end{multline*}
The last equality follows from the relation $\Sigma_0^{1/2} T \Sigma_0^{1/2} = (\Sigma_0^{1/2}\Sigma_1\Sigma_0^{1/2})^{1/2}$.

By substituting Eq.~\eqref{eq:ML} into the equation of the geodesic \eqref{eq:geodesic-new}, we get

\begin{multline*}
\Sigma(\theta)  =\Sigma(0)+\theta\left[ (T-I)\Sigma(0)+\Sigma(0)(T-I)\right]
+\theta^{2}(T-I)\Sigma(0)(T-I)  \label{eq:GH} \\
 =\Sigma(0)+\theta\dot{\Sigma}(0)+\theta^{2}\mathcal{L}_{\Sigma(0)}[\dot{\Sigma }
(0)]\Sigma(0)\mathcal{L}_{\Sigma(0)}[\dot{\Sigma}(0)]\ . 
\end{multline*}

We are so led to the following definition, see \cite[p. 101--102]{absil|mahony|sepulchre:2008}) for example.

\begin{definition}
For any $C\in \operatorname{Sym}^{++}\left( n\right) $ and $V\in \operatorname{Sym}\left(
n\right) \simeq T_{C}\operatorname{Sym}^{++}\left( n\right) $, the Wasserstein
Riemannian exponential is 
\begin{equation} \label{eq:EXP}
\operatorname{Exp}_{C}\left( V\right) =C+V+\mathcal{L}_{C}[V]C\mathcal{L}_{C}[V]=(
\mathcal{L}_{C}[V]+I)C(\mathcal{L}_{C}[V]+I)\ ,  
\end{equation}
\end{definition}

Next proposition collects some properties of the Riemannian exponential.

\begin{proposition}\ \label{Prop:FFA}
\begin{enumerate}
\item All geodesics emanating from a point $C\in\ppsym n$ are of the form $\Sigma (\theta)=\operatorname{Exp}_{C}\left(\theta V\right) $,
with $\theta\in J_{V}$, where $J_{V}$ is the open interval about the
origin: 
\begin{equation*}
J_{V}=\setof{ \theta\in \reals}{I+\theta\lyapunov C V\in \ppsym n}  \ .  \label{eq:UNB}
\end{equation*}
\item The map $V\mapsto \operatorname{Exp}_{C}\left( V\right) ,$ restricted to
the open set 
\begin{equation*}
\Theta =\left\{ V\in \operatorname{Sym}\left( n\right) :I+\mathcal{L}_{C}[V]\in 
\operatorname{Sym}^{++}\left( n\right) \right\} ,
\end{equation*}
is a diffeomorphism of $\Theta $ into $\operatorname{Sym}^{++}\left( n\right) $ with inverse 
\begin{equation*}
\operatorname{Log}_{C}\left( B\right)
= (BC)^{1/2} + (CB)^{1/2} - 2C \ ;
\end{equation*}
\item The derivative of the Riemannian exponential is 
\begin{equation*}
d_{X}\left( V\longmapsto \operatorname{Exp}_{C}\left( V\right) \right) =X+\mathcal{L}
_{C}[X]C\mathcal{L}_{C}[V]+\mathcal{L}_{C}[V]C\mathcal{L}_{C}[X]\ .
\end{equation*}
\end{enumerate}
\end{proposition}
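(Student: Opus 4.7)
The plan is to prove the three items sequentially, each leveraging the Riemannian-submersion construction of Sec.~\ref{Sect:WASS} and the Riccati calculus of Sec.~\ref{sec:general-set-up}. For item~(1), I would first rewrite Eq.~\eqref{eq:EXP} in the factored form
\begin{equation*}
\operatorname{Exp}_C(\theta V) = (I + \theta \lyapunov C V)\, C \,(I + \theta \lyapunov C V),
\end{equation*}
which is equivalent to the original definition thanks to the Lyapunov identity $\lyapunov C V \cdot C + C \cdot \lyapunov C V = V$. Next, choose any $A_0 \in \operatorname{GL}(n)$ with $A_0 A_0^* = C$ and consider the Euclidean straight line $A(\theta) = (I + \theta \lyapunov C V)A_0$. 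It is horizontal for $\MMT$, since $\dot A(\theta)A(\theta)^{-1} = \lyapunov C V (I + \theta \lyapunov C V)^{-1}$ is symmetric (the factors commute and both are symmetric). By Prop.~\ref{prop:horizontal-surf}, Item~\ref{prop:horizontal-surf3}, its projection $\MMT(A(\theta)) = (I + \theta \lyapunov C V)\, C \,(I + \theta \lyapunov C V)$ is a Wasserstein geodesic through $C$ with initial velocity $\dot\Sigma(0) = V$. Standard uniqueness of a Riemannian geodesic given its initial data identifies it as \emph{the} geodesic emanating from $C$ in direction $V$. The set $J_V$ is an interval containing $0$, since for each $v$ the map $\theta \mapsto v^*(I + \theta \lyapunov C V)v$ is affine (positivity sets of affine functions are intervals), and it is open because $\ppsym n$ is open.

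For item~(2), set $B = \operatorname{Exp}_C(V)$ with $V \in \Theta$ and let $M = I + \lyapunov C V \in \ppsym n$. The exponential formula reads $MCM = B$, a Riccati equation in $M$. By the uniqueness of its positive-definite solution via Eqs.~\eqref{eq:AB1}--\eqref{eq:AB2}, we get $M = C^{-1}(CB)^{1/2} = (BC)^{1/2} C^{-1}$. Hence
\begin{equation*}
V = (M - I)C + C(M - I) = MC + CM - 2C = (BC)^{1/2} + (CB)^{1/2} - 2C.
\end{equation*}
Since both $B \mapsto (CB)^{1/2}$ and $B \mapsto (BC)^{1/2}$ are smooth on $\ppsym n$, the explicit inverse establishes the claim that $\operatorname{Exp}_C$ is a diffeomorphism of $\Theta$ onto its image.

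For item~(3), I apply the Leibniz rule to the factored form of item~(1), using that the linear map $V \mapsto \lyapunov C V$ has differential $X \mapsto \lyapunov C X$. This yields
\begin{equation*}
d_X \operatorname{Exp}_C(V) = \lyapunov C X \cdot C \cdot (I + \lyapunov C V) + (I + \lyapunov C V) \cdot C \cdot \lyapunov C X,
\end{equation*}
and the pure $X$ terms collapse to $\lyapunov C X \cdot C + C \cdot \lyapunov C X = X$ by the Lyapunov equation, leaving precisely the stated cross terms.

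The main obstacle lies in item~(1): one must justify that the positive-definiteness condition cut out by $J_V$ already captures the full maximal domain of the geodesic, rather than an a priori larger interval on which $I + \theta \lyapunov C V$ is merely invertible. This reduces to noting that the eigenvalues $1 + \theta \lambda_i$ of the symmetric matrix $I + \theta \lyapunov C V$ are monotone affine functions of $\theta$ along each eigenline, so the first loss of invertibility as $\theta$ moves away from $0$ coincides with the first loss of positive-definiteness; thus the positivity interval equals the connected component of the invertibility locus containing $0$.
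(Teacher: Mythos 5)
Your proof is correct and takes essentially the same route as the paper's: the factored Riccati form $\operatorname{Exp}_C(\theta V)=(I+\theta\lyapunov C V)C(I+\theta\lyapunov C V)$ together with the horizontal-line/submersion machinery for item (1), the unique positive-definite Riccati solution plus inversion of $\mathcal{L}_C$ for item (2) (you use Eqs.~\eqref{eq:AB1}--\eqref{eq:AB2}, the paper uses Eqs.~\eqref{eq:ricc} and \eqref{eq:sqrtAB}), and for item (3) the direct bilinear computation that the paper leaves implicit. The only differences are cosmetic: in item (1) you verify horizontality of the lift $(I+\theta\lyapunov C V)A_0$ directly and appeal to uniqueness of geodesics with given initial data, whereas the paper reparametrizes the arc and matches it with the two-point geodesic form; note only that the conclusion that the projection of a horizontal line is a Riemannian geodesic is licensed by the Corollary following Prop.~\ref{prop:horizontal-surf}, not by Item~\ref{prop:horizontal-surf3} itself, which merely characterizes horizontality.
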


\begin{remark}
Notice that $I + \theta \lyapunov C V = \lyapunov C {\frac12 C^{-1}+\theta V}$ hence, $\theta \in J_V$ if $\frac12 C^{-1} + \theta V \in \ppsym n$. 

Clearly, $0\in J_{V}$ and $\operatorname{Exp}_C(0) = C$ and the maximal open interval containing 0 in which $\operatorname{Exp}_C(\theta V) \in \ppsym n$ is precisely $J_V$. Moreover, the interval $J_{V}$ is
unbounded from the right, i.e., it is of the kind $J_{V}=\left( \bar{\theta}
,+\infty \right) $, provided $V\in\psym n$.
Likewise, $J_{V}=\left( -\infty ,\bar{\theta}\right) $, if $- V\in \psym n$.
Similarly, $\Theta$ is an open set containing the origin and so $V\mapsto 
\operatorname{Exp}_{C}\left( V\right) $ is a local diffeomorphism around the origin.

Since the geodesics are not defined for all the values of the parameter $
t\in \reals$, we infer that the Riemannian manifold $\operatorname{Sym}
^{++}\left( n\right) $ is geodesically incomplete.
Of course this is not a surprising fact: $\operatorname{Sym}^{++}\left( n\right)$ is not a complete metric space, and hence Hopf-Rinow theorem implies that it cannot be geodesically complete, see M.P.~do Carmo \cite{docarmo:1992}.
\end{remark}

\begin{proof}
\begin{enumerate}
\item  Let

\begin{equation*}
\Sigma(\theta) = \operatorname{Exp}_{C}\left(\theta V\right) =C+\theta V+\theta^{2}\mathcal{L}
_{C}[V]C\mathcal{L}_{C}[V] \ , \quad \theta\in J_{V} \ .  
\end{equation*}

Clearly, $\Sigma (0)=C$ and $\dot{\Sigma}(0)=V$.
Pick a scalar $\bar{\theta}\in J_{V}$ and consider the two matrices $\Sigma
\left( 0\right) $ and $\Sigma \left( \bar{\theta}\right) $ belonging to the curve $\Sigma .$
Introduce the new parameterization $\tilde{\Sigma}\left( \tau \right) =\Sigma
\left( \tau \bar{\theta}\right) $, so that $\tilde{\Sigma}\left( 0\right) =\Sigma
\left( 0\right) $ and $\tilde{\Sigma}\left( 1\right) =\Sigma \left( \bar{\theta}
\right) $. We have, 

\begin{equation}\label{eq:partial}
\tilde{\Sigma}\left( \tau \right) =C+\tau (\bar{\theta}V) + \tau ^{2}\lyapunov C {\bar{\theta}V} C \lyapunov C {\bar{\theta}V} \ .
\end{equation}
Setting $\tilde T-I=\lyapunov C {\bar{\theta}V}$, we have $\widetilde T \in \ppsym n$ and 
\begin{equation*}
\tilde T C \tilde T = (I+\lyapunov C {\bar \theta}) C (I+\lyapunov C {\bar \theta}) = \tilde \Sigma(1) \ ,
\end{equation*} 
and the Eq.~\eqref{eq:partial} above becomes 

\begin{multline*}
\tilde{\Sigma}\left( \tau \right) =
C+\tau (\tilde T-I)C+\tau C(\tilde T-I)+\tau ^{2}(\tilde T-I)C(\tilde T-I)= \\
=\left[ \left( 1-\tau \right) I+\tau \tilde T\right] C\left[ \left( 1-\tau
\right) I+\tau \tilde T\right] \ ,
\end{multline*}
which is the geodesic connecting $\Sigma(0) = \tilde \Sigma(0) = C$ to $\tilde \Sigma(1) = \Sigma(\bar\theta)$.

\item By Eq.~\eqref{eq:EXP} the solution to Riccati equation 
\begin{equation*}
\operatorname{Exp}_{C}\left( V\right) =(I+\mathcal{L}_{C}[V])C(I+\mathcal{L}
_{C}[V])=B
\end{equation*}
is 
\begin{equation*}
I+\mathcal{L}_{C}[V]=C^{-1/2}(C^{1/2}BC^{1/2})^{1/2}C^{-1/2}\ 
\end{equation*}
provided $I+\mathcal{L}_{C}[V]\in \operatorname{Sym}^{++}\left( n\right) $. This is
true in a sufficiently small neighborhood $\left\Vert V\right\Vert <r$ of
the origin. The inversion of the operator $\mathcal{L}_{C}[\cdot]$ and Eq.~\eqref{eq:sqrtAB} provide the desired formula for $\operatorname{Log}_{C}\left( B\right)$.

\item The derivative follows from a simple bilinear computation.
\end{enumerate}
\end{proof}

The second order properties of the geodesic and the Riemannian exponential will be discussed in Sec.~\ref{sec:hess-newt-meth}.

\section{Natural gradient\label{sec:natural-gradient}}

We have found the form of the Riemannian metric associated to Wasserstein distance. In turn, the inner product equals the second order approximation of $W^2$. This is a general fact, whose interpretation is based on the discussion of the natural gradient of the metric as solution to the problem
\begin{equation*}
  \label{eq:naturalvialagrangian}
  \begin{cases} \max f(X+H) - f(X) \\ \text{subject to} \\
    W^2(X,X+H) = \text{$\varepsilon$ (small and fixed)}
  \end{cases}
\end{equation*}
which allows the identification of the direction of the maximal increase of the function $f$ with the natural gradient, according to the name introduced by Amari \cite{amari:1998natural}, i.e., the Riemannian gradient as defined below.

The Riemannian gradient is the gradient with respect to the inner product of the metric. We denote by $\nabla$ the gradient with respect to the inner product $\scalarat 2 \cdot \cdot$ and by $\Grad$ the gradient with respect to the Riemannian metric. By Prop. \ref{prop:altWmetric}, $W_\Sigma(X,Y) = \scalarat 2 {\lyapunov \Sigma X} Y$, hence for each smooth scalar field $\phi$ we have
\begin{equation*}
  \label{eq:natu}
\Grad \phi (\Sigma) = \mathcal L_{\Sigma}^{-1} [\nabla \phi(\Sigma)] = \nabla \phi(\Sigma) \Sigma + \Sigma \nabla \phi (\Sigma) \ , 
\end{equation*}
where the second equality follows from the definition of $\mathcal L_\Sigma$. Conversely,
\begin{equation*}
\mathcal{L}_{\Sigma }\left[ \operatorname{grad}\phi (\Sigma )\right] =\nabla \phi
(\Sigma ) \ .
\end{equation*}

The gradient flow of a smooth scalar field $\phi $ is the flow
generated by the vector field
\begin{equation*}
\gamma \mapsto (\gamma , - \operatorname{grad}\phi
(\gamma )) \ ,
\end{equation*}  
that is, the flow of the differential equation 
\begin{equation*}
\dot{\gamma}(\theta)=-\operatorname{grad}\phi (\gamma (\theta))=-\left( \nabla \phi (\gamma
(\theta))\gamma (\theta)+\gamma (\theta)\nabla \phi (\gamma (\theta))\right) \ .
\end{equation*}

The gradient flow equation is the model for many optimization problems which are based on various discrete time approximations of the gradient flow. It should be noted that the expression of the natural gradient in the Wasserstein Riemannian metric is simple and does not require any time-consuming operation as it is the case in optimization methods using the Fisher Riemannian metric. We do not discuss this issue here and refer to \cite{amari:1998natural,absil|mahony|sepulchre:2008,malago|pistone:2015FOGA}. 

\subsection{Gradient flow and optimization}\label{sec:optimization}

With reference to the full Gaussian distribution, one can consider smooth functions
defined on ${\reals}^n\times \operatorname{Sym}^{++}\left( n\right) $. The first
component of the gradient does not require a special gradient as the
Riemannian structure is the Euclidean one. The full gradient will thus have
two components: 
\begin{multline} \label{eq:ZAM} 
\Grad  \phi (\mu ,\Sigma ) = 
\left( \nabla _{1}\phi (\mu ,\Sigma ),\operatorname{grad}_{2}\phi (\mu
,\Sigma )\right) = \\ \left( \nabla _{1}\phi (\mu ,\Sigma ),\nabla _{2}\phi (\mu
,\Sigma )\Sigma +\Sigma \nabla _{2}\phi (\mu ,\Sigma )\right) \ .
\end{multline}

An important example is based to the gradient flow of the mean value of an objective function $f \colon \reals^n \to \reals$. Its Euler scheme is used in optimization, see \cite[Ch. 4]{absil|mahony|sepulchre:2008} and \cite{malago|pistone:2014Entropy}. In the second example in Sec.~\ref{sec:entropy} we discuss the gradient flow of the entropy function of a centered Gaussian.

We call relaxation to the full Gaussian model of the objective function $f:{\reals}^{n}\rightarrow {\reals}$ the function
\begin{equation*}
\phi (\mu ,\Sigma )=\mathbb{E}\left[ f(X)\right] ,\quad X\sim \operatorname{N}
_{n}\left( \mu ,\Sigma \right) \ .
\end{equation*}

If we would include the Dirac measures in the Gaussian model, then $
f(x)=\phi(x,0)$ and the function $\phi$ would actually be an extension of
the given function. However, we consider only $\Sigma\in\operatorname{Sym}
^{++}\left( n\right) $ in order to work with a function defined on our
manifold.

There are two ways to calculate the expected value as a function of $\mu $
and $\Sigma$. Each of them leads to a peculiar expression of the natural gradient.

The first one arises from the relation 
\begin{equation*}
\phi (\mu ,\Sigma )=\mathbb{E}\left[ f(\Sigma ^{1/2}Z+\mu )\right] ,\quad
Z\sim \operatorname{N}_{n}\left( 0,I\right) \ .  \label{eq:DET}
\end{equation*}
which will lead to an equation for the gradient involving the derivatives of 
$f$. The second one uses 
\begin{equation*}
\phi (\mu ,\Sigma )=\int f(x)(2\pi )^{-n/2}\operatorname{det}\left( \Sigma \right)
^{-1/2}\operatorname{exp}\left( -\frac{1}{2}(x-\mu )^{\ast }\Sigma ^{-1}(x-\mu
)\right) \ dx\ .  \label{eq:AQ}
\end{equation*}
In this second case the natural gradient will be achieved by an equation not
involving the gradient of the function $f$. Both forms have their own field
of application.

Let us start with Case~\eqref{eq:DET}. Under standard conditions regarding
the derivation under the expectation sign, we have 
\begin{equation*}
\nabla_{1}\phi(\mu,\Sigma)=\mathbb{E}\left[ \nabla f(\Sigma^{1/2}Z+\mu)
\right] =\mathbb{E}\left[ \nabla f(X)\right] \ .
\end{equation*}

By means of Eq.~\eqref{eq:DRO}, it is straightforward to compute  $d_{U}\left( \Sigma \mapsto \phi (\mu ,\Sigma )\right) $.

Note that $\nabla f$ is the column vector and so $\nabla ^{\ast }f$ will be
a row vector. We have
\begin{equation*}
\begin{aligned}
d_{U}\phi (\mu ,\Sigma )& =\mathbb{E}\left[ df(\Sigma ^{1/2}Z+\mu )[\mathcal{
L}_{\Sigma ^{1/2}}\left( U\right) Z]\right] =\mathbb{E}\left[ \nabla ^{\ast
}f(\Sigma ^{1/2}Z+\mu )\mathcal{L}_{\Sigma ^{1/2}}\left( U\right) Z\right] \\
& =\mathbb{E}\left[ \operatorname{Tr}\nabla ^{\ast }f(\Sigma ^{1/2}Z+\mu )\mathcal{L}
_{\Sigma ^{1/2}}\left( U\right) Z\right] .
\end{aligned}
\end{equation*}
Under symmetrization (and setting $X=\Sigma ^{1/2}Z+\mu $): 
\begin{equation*}
\begin{aligned}
d_{U}\phi (\mu ,\Sigma )& =\frac{1}{2}\mathbb{E}\left[ \operatorname{Tr}\mathcal{L}
_{\Sigma ^{1/2}}\left( U\right) \left( Z\nabla ^{\ast }f(X)+\nabla
f(X)Z\right) \right] \\
& =\left\langle U,\mathbb{E}\left( \left( Z\nabla ^{\ast }f(X)+\nabla
f(X)Z\right) \right) \right\rangle _{\Sigma ^{1/2}} \\
& =\frac{1}{2}\mathbb{E}\operatorname{Tr}\mathcal{L}_{\Sigma ^{1/2}}\left( Z\nabla
^{\ast }f(X)+\nabla f(X)Z\right) U \\
& =\left\langle \mathbb{E}\mathcal{L}_{\Sigma ^{1/2}}\left( Z\nabla ^{\ast
}f(X)+\nabla f(X)Z\right) ,U\right\rangle _{2} \ .
\end{aligned}
\end{equation*}
It follows that 
\begin{equation*}
\nabla _{2}\phi (\mu ,\Sigma )=\mathbb{E}\left[\mathcal{L}_{\Sigma ^{1/2}}\left(
Z\nabla ^{\ast }f(X)+\nabla f(X)Z\right)\right] .
\end{equation*}
Calculating the natural gradient:
\begin{multline*}
\operatorname{grad}_{2}\phi (\mu ,\Sigma ) =  \\ \Sigma \mathbb{E}\left[\mathcal{L}_{\Sigma
^{1/2}}\left( Z\nabla ^{\ast }f(X)+\nabla f(X)Z\right)\right] + \mathbb{E}\left[\mathcal{L}_{\Sigma ^{1/2}}\left( Z\nabla ^{\ast }f(X)+\nabla
f(X)Z\right)\right] \Sigma .
\end{multline*}
If we set $\Xi =\mathbb{E}\left[ Z\nabla ^{\ast }f(X)+\nabla f(X)Z\right] $,
the natural gradient admits the representation
\begin{equation*}
\operatorname{grad}_{2}\phi (\mu ,\Sigma )=\Sigma \mathcal{L}_{\Sigma ^{1/2}}\left(
\Xi \right) +\mathcal{L}_{\Sigma ^{1/2}}\left( \Xi \right) \Sigma .
\end{equation*}

We move on to consider the second Case~\eqref{eq:AQ}. Following the standard
computation of the Fisher score and starting from the log-density $p(x;\mu
,\Sigma )$ of $\operatorname{N}_{n}\left( \mu ,\Sigma \right) $, we have 
\begin{equation}
\begin{aligned}
& \left. \log p(x;\mu ,\Sigma )=-\frac{n}{2}\log 2\pi -\frac{1}{2}\log \det
\Sigma -\frac{1}{2}(x-\mu )^{\ast }\Sigma ^{-1}(x-\mu )\right.
\label{eq:logdensity} \\
& \left. =-\frac{n}{2}\log 2\pi -\frac{1}{2}\log \det \Sigma -\frac{1}{2} - \operatorname{Tr}\left( \Sigma ^{-1}(x-\mu )(x-\mu )^{\ast }\right) \ .\right. 
\end{aligned}
\end{equation}

Denoting the partial derivative $d_{u}\left( \mu \longmapsto \log p(x;\mu
,\Sigma )\right) $ as $d_{u}\log p(x;\mu ,\Sigma )$, and the other derivative $d_{U}\left( \Sigma
\longmapsto \log p(x;\mu ,\Sigma )\right) $ as $d_{U}\log p(x;\mu ,\Sigma )$%
, we get:
\begin{equation*}
\begin{aligned}
d_{u}\log p(x;\mu ,\Sigma )& =(x-\mu )^{\ast }\Sigma ^{-1}u=\left\langle
\Sigma ^{-1}(x-\mu ),u\right\rangle \\
d_{U}\log p(x;\mu ,\Sigma )& =-\frac{1}{2}\operatorname{Tr}\left( \Sigma
^{-1}U\right) +\frac{1}{2}\operatorname{Tr}\left( \Sigma ^{-1}U\Sigma ^{-1}(x-\mu
)(x-\mu )^{\ast }\right) \\
& =\frac{1}{2}\left\langle \Sigma ^{-1}(x-\mu )(x-\mu )^{\ast }\Sigma
^{-1}-\Sigma ^{-1},U\right\rangle \\
& =\left\langle \Sigma ^{-1}\left( (x-\mu )(x-\mu )^{\ast }-\Sigma \right)
\Sigma ^{-1},U\right\rangle _{2}
\end{aligned}
\end{equation*}
So that 
\begin{equation*}
\begin{aligned}
d_{u}\phi (\mu ,\Sigma )& =\int f(x)\ d_{u}\log p(x;\mu ,\Sigma )\ p(x;\mu
;\Sigma )\ dx \\
& =\left\langle \Sigma ^{-1}\int f(x)(x-\mu )p(x;\mu ;\Sigma )\
dx,u\right\rangle
\end{aligned}
\end{equation*}
and 
\begin{equation*}
\begin{aligned}
d_{U}\phi (\mu ,\Sigma )& =\int f(x)\ d_{U}\log p(x;\mu ,\Sigma )\ p(x;\mu
,\Sigma )\ dx \\
& =\left\langle \Sigma ^{-1}\int f(x)\left( (x-\mu )(x-\mu )^{\ast }-\Sigma
\right) p(x;\mu ,\Sigma )\ dx\ \Sigma ^{-1},U\right\rangle _{2}.
\end{aligned}
\end{equation*}
At last, thanks to Eq.~\eqref{eq:ZAM}, the natural gradient of $\phi (\mu
,\Sigma )$ will be 
\begin{equation*}
\begin{aligned}
\nabla _{1}\phi (\mu ,\Sigma )& =\Sigma ^{-1}\int f(x)(x-\mu )p(x;\mu
;\Sigma )\ dx \\
\operatorname{grad}_{2}\phi (\mu ,\Sigma )& =\int f(x)\left( (x-\mu )(x-\mu )^{\ast
}-\Sigma \right) p(x;\mu ,\Sigma )\ dx\ \Sigma ^{-1} \\
& +\Sigma ^{-1}\int f(x)\left( (x-\mu )(x-\mu )^{\ast }-\Sigma \right)
p(x;\mu ,\Sigma )\ dx.
\end{aligned}
\end{equation*}

\subsection{Entropy gradient flow\label{sec:entropy}}

The flow of entropy can be easily calculated by Eq.~\eqref{eq:logdensity}. We
have 
\begin{equation*}
\begin{aligned}
\mathcal{E}(\mu,\Sigma) & =-\int\log p(x;\mu,\Sigma)p(x;\mu,\Sigma)\ dx \\
& =\frac{n}{2}\log2\pi+\frac{1}{2}\log\det\Sigma-\frac{1}{2}\operatorname{Tr}\left(
\Sigma^{-1}\Sigma\right) \\
& =\frac{n}{2}(\log2\pi-1)+\frac{1}{2}\log\det\Sigma\ .
\end{aligned}
\end{equation*}

The entropy does not depend on $\mu$ so that $\nabla_{1}\mathcal{E}(\mu
,\Sigma)=0$. Moreover (see \cite[\S 8.3]{magnus|neudecker:1999}) we know
that $\nabla\mathcal{E}(\Sigma)=\Sigma^{-1}$, so that 
\begin{equation*}
\operatorname{grad}\mathcal{E}(\Sigma)=(\Sigma^{-1}\Sigma+\Sigma\Sigma ^{-1})=2I.
\end{equation*}
The entropic flow will be solution to the equations 
\begin{equation*}
\dot{\mu}(t)=0,\quad\dot{\Sigma}(t)+2I=0\ ,
\end{equation*}
that is 
\begin{equation*}
\mu(t)=\mu(0),\quad\Sigma(t)=\Sigma(0)-2tI\ .
\end{equation*}

The integral curve is defined for all $t$ such that $2t<\lambda_{*}$, $%
\lambda_{*}$ being the minimum of the spectrum of $\Sigma(0)$.

\section{Second order geometry}\label{sec:second-order}

Recall that $\ppsym n$ as an open set of the Hilbert space $\sym n$, endowed with the inner product $\scalarat 2 X Y = \frac12 \traceof{XY}$. Prop.~\ref{prop:altWmetric} shows that the Wasserstein Riemannian metric $W$ can be expressed in terms of the inner product of $\sym n$ by
\begin{equation*}
W_\Sigma(X,Y) = \scalarat \Sigma X Y = \scalarat 2 {\lyapunov \Sigma X} Y \ ,  
\end{equation*}
for each $(\Sigma,X)$ and $(\Sigma,Y)$ in the trivial tangent bundle $T \ppsym n \simeq \ppsym n \times \sym n$. In the equation above, $\mathcal L \colon \ppsym n \mapsto L(\sym n,\sym n)$ is the field of linear operators defining the Wasserstein metric with respect to the standard inner product.  

In the trivial chart, a smooth vector field $X$ is a smooth mapping $X \colon \ppsym n \to \sym n$. The action of the vector field $X$ on the scalar field $f$ that is, $Xf$, is expressed in the trivial chart by $d_Xf$, i.e., the scalar field whose value at point $\Sigma$ is the derivative of $f$ in the direction $X(\Sigma)$. Similarly, $d_YX$ denotes the vector field whose value at point $\Sigma$ is the derivative at $\Sigma$ of $X$ in the direction $Y(\Sigma)$. The Lie bracket $[X,Y]$ of two smooth vector fields $X,Y$ is given by $d_XY - d_YX$.

\subsection{The moving frame}
While we prefer to express our computation by matrix algebra, in some cases it may be useful to employ a vector basis. We discuss below a field of vector bases  of particular interest.

The set of symmetric matrices
\begin{equation*}\label{eq:Epq}
  E^{p,q} = e_p e_q^* + e_qe_p^*, \quad p,q = 1,\dots,n \ ,
\end{equation*}
$e_p$ being the $p$-th element of the standard basis of $\reals^n$, spans the vector space $\sym n$. Notice that $\traceof{E^{p,q}} = 2 \delta_{p,q}$, where $\delta$ is the Kronecker symbol. To avoid repeated elements, a unique enumeration is obtained by taking  indexes in the set $A$ of the parts of $\set{1,\dots,n}$ having 1 or 2 elements. 

The generating set of Eq.~\eqref{eq:Epq} is related to the symmetric product of matrices by the equation
\begin{equation*}
 E^{p,q} E^{r,s}  + E^{r,s} E^{p,q} = \delta_{q,r} E^{p,s} + \delta_{q,s}E^{p,r} + \delta_{p,r}E^{q,s} + \delta_{p,s} E^{q,r} \ ,
\end{equation*}
where $\delta$ is the Kronecker symbol.

In particular, if we take the trace of the equation above, we get
\begin{equation*}
\scalarat 2 {E^{p,q}} {E^{r,s}} = \delta_{p,r}\delta_{q,s}+\delta_{p,s}\delta_{q,r} \ ,
\end{equation*}
which in turn implies
\begin{equation*}
 \scalarat 2 {E^{p,q}} {E^{r,s}} = \begin{cases}
   0 & \text{if $\set{p,q}\neq\set{r,s}$}, \\
   1 & \text{if $\set{p,q}=\set{r,s}$ and $p \ne q$}, \\
   2 & \text{if $\set{p,q}=\set{r,s}$ and $p = q$} \\
  \end{cases}
\end{equation*}

In the sequel, we denote by $(E^\alpha)_{\alpha \in A}$ the vector basis above, properly normalized to obtain an orthonormal basis. We do not write down the normalizing constants in order to simplify the notation.

For each $\Sigma \in \ppsym n$ the sequence
\begin{equation}\label{eq:movingframe}
\MF \alpha(\Sigma) = E^\alpha \Sigma + \Sigma E^\alpha , \quad \alpha \in A \ ,   
\end{equation}
is a vector basis of $\sym n \simeq T_\Sigma \ppsym n$, because it is the image of a vector basis under a linear mapping which is onto. We will call such a sequence of vector fields the (principal) moving frame. 

Notice the following properties:
\begin{equation*}
\MF \alpha = d_{E^\alpha}\Sigma^2 \ ; \quad
\lyapunov \Sigma {\MF\alpha(\Sigma)} = E^\alpha \ ; \quad
\MF\alpha(I) = 2E^\alpha \ .
\end{equation*}

At a generic point $\Sigma$, we can express each $\MF\alpha$ in the $(E^\beta)_\beta$'s orthonormal basis as
\begin{equation}  \label{eq:Ginv}
\MF \alpha (\Sigma) = \sum_\beta g_{\alpha,\beta}(\Sigma) E^\beta \ , \quad g_{\alpha,\beta}(\Sigma) = \traceof{E^\alpha \Sigma E^\beta} \ .  
\end{equation}
Since
\begin{equation*}
W_\Sigma(\MF \alpha,\MF \beta) = \traceof{\lyapunov \Sigma {\MF \alpha (\Sigma)} \Sigma \lyapunov \Sigma {\MF \beta (\Sigma)}} = \traceof{E^\alpha \Sigma E^\beta} \ ,
\end{equation*}
the matrix $[g_{\alpha,\beta}]_{\alpha,\beta}$ is the expression of the Riemannian metric in such a  moving frame. Namely, if $X,Y$ are vector fields expressed in the moving frame as $X = \sum_\alpha x_\alpha \MF \alpha$ and $Y = \sum_\beta y_\beta \MF \beta$, then

\begin{multline*}
  W_\Sigma(X,Y) = \traceof{\lyapunov \Sigma {\sum_\alpha x_\alpha(\Sigma)\MF \alpha} \Sigma (\Sigma) \ \lyapunov \Sigma {\sum_\beta y_\beta(\Sigma) \MF \beta} (\Sigma)} = \\ \traceof{\left(\sum_\alpha x_\alpha(\Sigma)E^\alpha\right) \Sigma \left(\sum_\beta y_\beta(\Sigma) E^\beta\right)} = \sum_{\alpha,\beta} x_\alpha(\Sigma)y_\beta(\Sigma)g_{\alpha,\beta}(\Sigma) \ .
\end{multline*}
This expression of the inner product is to be compared to that used in \cite{takatsu:2011osaka}. 

In this way, any vector field $X$ has two representations: one with respect to the moving frame $(\MF \alpha)_\alpha$ and another one with respect to the basis $(E^\alpha)_\alpha$. These two representations are related to each other as follows. We have
\begin{equation*}
X = \sum_\alpha x_\alpha \MF \alpha = \sum_\alpha x_\alpha \sum_\beta g_{\alpha,\beta} E^\beta = \sum_\beta \left(\sum_\alpha x_\alpha g_{\alpha,\beta}\right) E^\beta \ ,  
\end{equation*}
so that
\begin{equation*}
  \scalarat 2 X {E^\gamma} = \frac12
\traceof{XE^\gamma} = \sum_\beta \left(\sum_\alpha x_\alpha g_{\alpha,\beta}\right) \traceof{E^\beta E^\gamma} = \sum_\alpha x_\alpha g_{\alpha,\gamma} \ ,    
\end{equation*}
hence, by applying the inverse matrix $[g^{\alpha,\beta}(\Sigma)]=[g_{\alpha,\beta}(\Sigma)]^{-1}$, we have
\begin{equation}\label{eq:movingformfixed}
x_\alpha = \sum_\gamma g^{\alpha,\gamma} \scalarat 2 {X}{E^\gamma} \ .    
\end{equation}

For example, $\lyapunov \Sigma V = \sum_\alpha \ell_{\Sigma}^\alpha(V) \MF \alpha (\Sigma)$, with
\begin{equation*}
\ell_{\Sigma}^\alpha(V) = \sum_\gamma g^{\alpha,\gamma}(\Sigma)\scalarat 2 {\lyapunov \Sigma V} {E^\gamma} =  W_\Sigma(V,\sum_\gamma g^{\alpha,\gamma}E^\gamma) \ .   
\end{equation*}

\subsection{Covariant derivative in the moving frame}
If $X$ and $Y$ are vector fields, denote by $D_YX$ the action of a covariant derivative, namely, a bilinear operator satisfying, for each scalar field $f$, the following two conditions:

\begin{itemize}
\item[\it(CD1)]
$D_{fY}X = fD_Y X$ \ ,
\item[\it(CD2)]
$D_Y (fX) = (d_Yf) X + f D_Y X$ \ .
\end{itemize}
see e.g \cite[Sect. 3]{docarmo:1992} or \cite[Ch. 8.4]{lang:1995}. 

A convenient way to express a covariant derivative in the moving frame \eqref{eq:movingframe} is to define Christoffel symbols in the moving frame as
\begin{equation*}
  \sum_\gamma \Gamma_{\alpha,\beta}^\gamma \MF\gamma = D_{\MF \alpha} \MF \beta = E^\beta E^\alpha + E^\alpha E^\beta \ .
\end{equation*}
Each $\Gamma_{\alpha,\beta}^\gamma$ is to be computed by means of Eq.~\eqref{eq:movingformfixed}.

If $X=\sum_\alpha x_\alpha \MF \alpha$ and $Y = \sum_\beta y_\beta \MF \beta$, by using \emph{(CD1)}, \emph{(CD2)}, and Eq. \eqref{eq:Ginv}, we obtain  

\begin{multline*}
  D_XY = \sum_{\alpha,\beta} x_\alpha D_{\MF\alpha} (y_\beta \MF\beta) = \sum_{\alpha,\beta} x_\alpha \left(\left(d_{\MF\alpha}y_\beta\right) \MF\beta + y_\beta \left(D_{\MF\alpha}\MF\beta\right)\right) = \\
\sum_{\alpha,\gamma} x_\alpha d_{\MF\alpha}y_\gamma \MF\gamma + \sum_{\alpha,\beta,\gamma} y_\beta \Gamma_{\alpha,\beta}^\gamma \MF\gamma  = 
 \sum_\gamma \sum_{\alpha,\beta} x_\alpha \left(d_{\MF\alpha}y_\gamma  + y_\beta \Gamma_{\alpha,\beta}^\gamma \right)\MF\gamma \ .
\end{multline*}
 
The inner product of $D_XY$ and $Z = \sum_\delta z_\delta \MF\delta$ is
\begin{equation*}
  \scalarat \Sigma {D_XY} Z = \sum_{\alpha,\beta,\gamma,\delta}  x_\alpha \left(d_{\MF\alpha}y_\gamma  + y_\beta \Gamma_{\alpha,\beta}^\gamma \right) g_{\delta,\gamma} z_\delta\ .
  \end{equation*}

\subsection{Levi-Civita derivative\label{sec:levi-civita-covar}}

The Levi-Civita (covariant) derivative of a vector field, is the unique covariant derivative $D$ that, for all vector fields $X,Y,Z$, is

\begin{tabular}{ll}
\emph{(LC1)} & compatible with the metric, $d_{X}W(Y,Z)=W(D_X Y,Z) + W(Y,D_{X}Z)$, \\
\emph{(LC2)} & torsion-free, $D_{Y}X-D_{X}Y = [X,Y] = d_Y X - d_X Y$. \end{tabular}

In order to keep a compact notation, it will be convenient to make use of the
symmetrized of a matrix $A \in \matrices n$, defined by $\symmetricof A = \frac{1}{2}\left( A+A^*\right)$. If either $A$ or $B$ is symmetric, then $\traceof{\symmetricof{A}B} = \traceof{AB}$. We denote by $X,Y,Z$ smooth vector fields on $\ppsym n$. We shall use repeatedly the expression for the derivative of the vector field $\Sigma \mapsto \lyapunov \Sigma X$. In view of Eq.~\eqref{eq:DEDE} and under our notation for the symmetrization, it holds
\begin{equation*}
  \label{eq:DL1}
  d_Y \lyapunov \Sigma X = -2 \lyapunov \Sigma {\symmetricof{\lyapunov \Sigma X Y}} \ .
\end{equation*}

\begin{proposition}\label{th:levi}
  The Levi-Civita derivative $D_{X}Y$ is implicitly defined by

\begin{multline} \label{eq:HP} 
\scalarat \Sigma {D_{X}Y} Z = \scalarat \Sigma {d_{X}Y} Z +  \scalarat \Sigma X {\symmetricof{\lyapunov \Sigma Y Z}} \\ - \scalarat \Sigma X {\symmetricof{\lyapunov \Sigma Z Y}} - \scalarat \Sigma Y {\symmetricof{\lyapunov \Sigma Z X}}
= \\ \scalarat \Sigma {d_{X}Y} Z + \frac{1}{2}\traceof{\lyapunov \Sigma X Z \lyapunov \Sigma Y} - \\ \frac{1}{2}\traceof{\lyapunov \Sigma X Y \lyapunov \Sigma Z} - \frac{1}{2}\traceof{\lyapunov \Sigma Y X \lyapunov \Sigma Z} \ ,
\end{multline}
while the Levi-Civita derivative itself is given by
\begin{equation*}  \label{eq:OP} 
D_{X}Y = d_{X}Y - \symmetricof{\lyapunov \Sigma X Y + \lyapunov \Sigma Y X} + \symmetricof{\Sigma \lyapunov \Sigma X \lyapunov \Sigma Y + \Sigma \lyapunov \Sigma Y \lyapunov \Sigma X} \ . 
\end{equation*}
\end{proposition}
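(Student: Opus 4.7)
The plan is to apply the Koszul formula — which uniquely characterizes the Levi--Civita derivative from (LC1) and (LC2) — and then simplify using the self-adjointness of $\lyapunov\Sigma$ with respect to the Frobenius product $\scalarat 2 \cdot \cdot$ together with the derivative rule \eqref{eq:DEDE}. On the open set $\ppsym n\subset\sym n$, with trivial tangent bundle, Koszul reads
\begin{equation*}
2\,W_\Sigma(D_XY,Z) = d_X W(Y,Z) + d_Y W(X,Z) - d_Z W(X,Y) - W_\Sigma(X,[Y,Z]) - W_\Sigma(Y,[X,Z]) + W_\Sigma(Z,[X,Y]),
\end{equation*}
with $[X,Y] = d_X Y - d_Y X$. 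Expanding each $d_W W(A,B)$ by the Leibniz rule into a metric-tensor piece $(d_W W)_\Sigma(A,B)$ (derivative with $A$, $B$ held fixed) plus two vector-field pieces $W_\Sigma(d_WA,B) + W_\Sigma(A,d_WB)$, the six vector-field contributions combine with the three bracket terms and telescope to $2 W_\Sigma(d_XY,Z)$, leaving the tensorial identity
\begin{equation*}
W_\Sigma(D_XY,Z) = W_\Sigma(d_XY,Z) + \tfrac12\bigl[(d_X W)_\Sigma(Y,Z) + (d_Y W)_\Sigma(X,Z) - (d_Z W)_\Sigma(X,Y)\bigr].
\end{equation*}

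Next I compute $(d_W W)_\Sigma(A,B)$. From Prop.~\ref{prop:altWmetric}, $W_\Sigma(A,B) = \tfrac12\traceof{\lyapunov\Sigma A\cdot B}$; differentiating at fixed $A$, $B$ with the help of \eqref{eq:DEDE} and the self-adjointness of $\lyapunov\Sigma$ — itself an immediate consequence of its defining equation and cyclic invariance of the trace — yields
\begin{equation*}
(d_W W)_\Sigma(A,B) = -\,\traceof{W\,\symmetricof{\lyapunov\Sigma A\,\lyapunov\Sigma B}}.
\end{equation*}
Substituting the three instances into the collapsed Koszul identity and reshuffling cyclically inside each trace identifies the right-hand side with the three $\symmetricof{\lyapunov\Sigma\cdot\,\cdot}$ summands appearing in \eqref{eq:HP}; in particular it produces exactly the trace form displayed in the second equality of the statement.

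For the explicit formula, I recast each trace on the right-hand side as an inner product $\scalarat 2 \cdot Z$. In the ``$Z$-in-the-middle'' traces $\traceof{\lyapunov\Sigma U\,A\,\lyapunov\Sigma Z}$, the anti-symmetric part of $\lyapunov\Sigma U\cdot A$ has zero trace against the symmetric factor $\lyapunov\Sigma Z$, so it may be replaced by $\symmetricof{\lyapunov\Sigma U\,A}$; self-adjointness of $\lyapunov\Sigma$ then moves the Lyapunov operator off $Z$. This yields an explicit symmetric $K = K(\Sigma;X,Y)$ satisfying
\begin{equation*}
W_\Sigma(D_XY - d_XY,\,Z) = \scalarat 2 K Z \quad\text{for every } Z\in\sym n.
\end{equation*}
The inverse of the Lyapunov operator is $M \mapsto M\Sigma + \Sigma M$, so $\scalarat 2 K Z = W_\Sigma(K\Sigma + \Sigma K,\,Z)$; applied to our $K$, this ``undoes'' the outer $\lyapunov\Sigma$ on two of its summands and produces a conjugation by $\Sigma$ in the remaining one, recovering the stated closed form for $D_XY$.

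The main obstacle is the trace bookkeeping: the three metric-tensor derivatives together produce six traces of the shape $\traceof{U\,\lyapunov\Sigma V\,\lyapunov\Sigma W}$, and matching them to the stated $\symmetricof{\lyapunov\Sigma\cdot\,\cdot}$ expressions requires systematic use of cyclic invariance, self-adjointness of $\lyapunov\Sigma$, and the reduction $\traceof{MN}=\traceof{\symmetricof{M}\,N}$ for symmetric $N$; the scalar case $n=1$, where everything commutes and both sides collapse to a single algebraic identity, is a useful consistency check. A cleaner alternative is to take the explicit formula as an ansatz: \emph{(LC2)} is then immediate from the manifest $X\leftrightarrow Y$ symmetry of the correction, while \emph{(LC1)} reduces to the identity $(d_X W)_\Sigma(Y,Z) = W_\Sigma(V(X,Y),Z) + W_\Sigma(Y,V(X,Z))$ for the correction $V$, which is again verified by the same trace manipulations.
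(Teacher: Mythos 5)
Your proposal is correct and takes essentially the same approach as the paper: the ``collapsed'' Koszul identity you obtain by telescoping the bracket terms is exactly the chart formula (Lang's \emph{MD3}) that the paper's proof invokes directly, the metric-derivative terms are evaluated with Eq.~\eqref{eq:DEDE} and the self-adjointness of $\mathcal{L}_\Sigma$ just as in the paper, and your final step of inverting the Lyapunov operator via $M \mapsto M\Sigma + \Sigma M$ reproduces the paper's trace manipulations leading from Eq.~\eqref{eq:HP} to the explicit formula for $D_XY$.
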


\begin{proof}
In our case, Eq. \emph{MD3} of \cite[p. 205]{lang:1995} becomes

\begin{multline}\label{eq:GHA} 
2 \scalarat 2 {D_X Y}{\lyapunov \Sigma Z} = \\ 2\scalarat 2 {d_XY}{\lyapunov \Sigma Z} + \scalarat 2 {Y}{d_X \lyapunov \Sigma Z} + \scalarat 2 {X}{d_Y \lyapunov \Sigma Z} - \scalarat 2 {X}{d_Z \lyapunov \Sigma Y} \ .
\end{multline}

By Eq.~\eqref{eq:DEDE} we have 
\begin{equation*}
  \scalarat 2 {Y}{d_X \lyapunov \Sigma Z} = - 2 \scalarat 2 {Y} {\lyapunov \Sigma {\symmetricof{\lyapunov \Sigma Z X}}} = - 2 \scalarat \Sigma {Y} {\symmetricof {\lyapunov \Sigma Z X}} \ ,
\end{equation*}
and, analogously, 
\begin{equation*}
  \scalarat 2 {X}{d_Y \lyapunov \Sigma Z} = - 2 \scalarat \Sigma {X} {\symmetricof {\lyapunov \Sigma Z Y}}, \quad 
  \scalarat 2 {X}{d_Z \lyapunov \Sigma Y} = - 2 \scalarat \Sigma {X} {\symmetricof {\lyapunov \Sigma Y Z}} \ .
\end{equation*}
This way, Eq.~\eqref{eq:GHA} becomes  the first part of Eq.~\eqref{eq:HP}.

The second part of Eq.~\eqref{eq:HP} is then easily obtained. For instance,
\begin{equation*}
\scalarat \Sigma X {\symmetricof{\lyapunov \Sigma Z}} = \frac12 \traceof{\lyapunov \Sigma X \symmetricof{Z \lyapunov \Sigma Y}} =\frac12 \traceof{\lyapunov \Sigma X Z  \lyapunov \Sigma Y} \ .  
\end{equation*}

Regarding the explicit formula of the Levi-Civita derivative \eqref{eq:OP},
observe that

\begin{multline*}
\frac12 \traceof{\lyapunov \Sigma X Z \lyapunov \Sigma Y} = \frac12 \traceof{\lyapunov \Sigma Y \lyapunov \Sigma X Z } = \frac12 \traceof{\symmetricof{\lyapunov \Sigma X \lyapunov \Sigma Y} Z} = \\ \frac12 \traceof{\lyapunov \Sigma {\symmetricof{\lyapunov \Sigma X \lyapunov \Sigma Y}\Sigma + \Sigma\symmetricof{\lyapunov \Sigma X \lyapunov \Sigma Y}}Z} = \\
\scalarat \Sigma {\symmetricof{\lyapunov \Sigma X \lyapunov \Sigma Y}\Sigma + \Sigma\symmetricof{\lyapunov \Sigma X \lyapunov \Sigma Y}}Z = \\
\scalarat \Sigma {\symmetricof{\Sigma \lyapunov \Sigma X \lyapunov \Sigma Y} + \symmetricof{\Sigma \lyapunov \Sigma Y \lyapunov \Sigma X}}{Z} = \\ \scalarat \Sigma {\symmetricof{\Sigma \lyapunov \Sigma X \lyapunov \Sigma Y + \Sigma \lyapunov \Sigma Y \lyapunov \Sigma X}}{Z} \ .
\end{multline*}
Moreover,

\begin{multline*}
  \frac{1}{2}\traceof{\lyapunov \Sigma X Y \lyapunov \Sigma Z} + \frac{1}{2}\traceof{\lyapunov \Sigma Y X \lyapunov \Sigma Z} = \\
  \frac12 \traceof{\symmetricof{\lyapunov \Sigma X Y + \lyapunov \Sigma Y X} \lyapunov \Sigma Z} =
  \scalarat \Sigma {\symmetricof{\lyapunov \Sigma X Y + \lyapunov \Sigma Y X}} Z
 \ .\end{multline*}
Therefore, Eq.~\eqref{eq:HP} can be written as 

\begin{multline*}
\scalarat \Sigma {D_XY} Z = \\ \scalarat \Sigma {d_{X}Y - \symmetricof{\lyapunov \Sigma X Y + \lyapunov \Sigma Y X} + \symmetricof{\Sigma \lyapunov \Sigma X \lyapunov \Sigma Y + \Sigma \lyapunov \Sigma Y \lyapunov \Sigma X}} Z \ ,
\end{multline*}
and the desired result obtains.
\end{proof}

We have computed the Levi-Civita covariant derivative using its explicit expression in term of derivatives of the metric. However is easy to check the result directly using the properties of the Lyapunov operator.

\subsection{Levi-Civita derivative in a moving frame}

Let us express the Levi-Civita derivative in the moving frame \eqref{eq:movingframe}. Note that $X(\Sigma) = \MF\alpha(\Sigma) = E^\alpha \Sigma + \Sigma E^\alpha$ and $Y(\Sigma) = \MF\beta(\Sigma) = E^\beta\Sigma + \Sigma E^\beta$ are vector fields. 

\begin{proposition}
For the Levi-Civita covariant derivative $D$, it holds
\begin{equation*}
D_{\MF\alpha}\MF\beta = E^\beta E^\alpha \Sigma + \Sigma E^\alpha E^\beta \ .
\end{equation*}
\end{proposition}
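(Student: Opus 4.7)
The proof plan is to directly apply the explicit formula for the Levi-Civita derivative from Proposition~\ref{th:levi}, namely
\begin{equation*}
D_X Y = d_X Y - \symmetricof{\lyapunov \Sigma X Y + \lyapunov \Sigma Y X} + \symmetricof{\Sigma\lyapunov \Sigma X \lyapunov \Sigma Y + \Sigma\lyapunov \Sigma Y \lyapunov \Sigma X} \ ,
\end{equation*}
with $X=\MF\alpha$ and $Y=\MF\beta$, and then exploit the defining property $\lyapunov \Sigma {\MF\gamma(\Sigma)}=E^\gamma$ of the moving frame to simplify everything into words in $E^\alpha, E^\beta, \Sigma$.

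First I would compute the ordinary directional derivative. Since $\MF\beta(\Sigma)=E^\beta\Sigma+\Sigma E^\beta$ depends linearly on $\Sigma$, the chain rule gives
\begin{equation*}
d_{\MF\alpha}\MF\beta = E^\beta \MF\alpha(\Sigma) + \MF\alpha(\Sigma) E^\beta = E^\beta E^\alpha \Sigma + E^\beta\Sigma E^\alpha + E^\alpha\Sigma E^\beta + \Sigma E^\alpha E^\beta \ .
\end{equation*}

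Next, using $\lyapunov \Sigma {\MF\alpha(\Sigma)}=E^\alpha$ and the analogous identity for $\beta$, the second bracket becomes
\begin{equation*}
\lyapunov \Sigma {\MF\alpha}\MF\beta + \lyapunov \Sigma {\MF\beta}\MF\alpha = E^\alpha E^\beta\Sigma + E^\alpha\Sigma E^\beta + E^\beta E^\alpha\Sigma + E^\beta\Sigma E^\alpha \ ,
\end{equation*}
whose symmetrization (using $\Sigma^* = \Sigma$, $E^\gamma{}^*=E^\gamma$) equals
\begin{equation*}
\tfrac12\bigl(E^\alpha E^\beta\Sigma + \Sigma E^\beta E^\alpha + E^\beta E^\alpha\Sigma + \Sigma E^\alpha E^\beta\bigr) + E^\alpha\Sigma E^\beta + E^\beta\Sigma E^\alpha \ .
\end{equation*}
Similarly, the third bracket is $\Sigma E^\alpha E^\beta + \Sigma E^\beta E^\alpha$, whose symmetrization is
\begin{equation*}
\tfrac12\bigl(\Sigma E^\alpha E^\beta + E^\beta E^\alpha\Sigma + \Sigma E^\beta E^\alpha + E^\alpha E^\beta\Sigma\bigr) \ .
\end{equation*}

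Finally I collect coefficients term by term. The mixed terms $E^\alpha\Sigma E^\beta$ and $E^\beta\Sigma E^\alpha$ appear in $d_{\MF\alpha}\MF\beta$ with coefficient $1$ and in the second bracket with coefficient $1$, cancelling exactly; the four terms $E^\alpha E^\beta\Sigma$, $\Sigma E^\beta E^\alpha$, $E^\beta E^\alpha\Sigma$, $\Sigma E^\alpha E^\beta$ carry coefficients $-\tfrac12+\tfrac12=0$, $-\tfrac12+\tfrac12=0$, $1-\tfrac12+\tfrac12=1$, $1-\tfrac12+\tfrac12=1$ respectively, leaving precisely
\begin{equation*}
D_{\MF\alpha}\MF\beta = E^\beta E^\alpha\Sigma + \Sigma E^\alpha E^\beta \ .
\end{equation*}
The computation is entirely algebraic; the main (and only) obstacle is careful bookkeeping of the eight symmetrization terms, which is trivialized by noting in advance that the mixed ``$E^\gamma\Sigma E^\delta$'' terms can only come from $d_{\MF\alpha}\MF\beta$ and the second bracket, while the pure ``$E^\gamma E^\delta\Sigma$/$\Sigma E^\gamma E^\delta$'' terms receive contributions from all three.
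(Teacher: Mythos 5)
Your proposal is correct and follows essentially the same route as the paper's own proof: apply the explicit formula for $D_XY$ from Proposition~\ref{th:levi} with $X=\MF\alpha$, $Y=\MF\beta$, use $\lyapunov \Sigma {\MF\gamma(\Sigma)}=E^\gamma$ to reduce every term to words in $E^\alpha,E^\beta,\Sigma$, and collect. The only difference is bookkeeping order (the paper sums the first two terms before adding the third, while you tally all coefficients at the end), and your term-by-term coefficients are all correct.
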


\begin{proof}
Eq.~\eqref{eq:OP} yields 

\begin{multline}\label{eq:OP-MF}
D_{\MF\alpha}\MF\beta =  d_{\MF\alpha}{\MF\beta} - \symmetricof{\lyapunov \Sigma {\MF\alpha} {\MF\beta} + \lyapunov \Sigma {\MF\beta} {\MF\alpha}} + \\ \symmetricof{\Sigma \lyapunov \Sigma {\MF\alpha} \lyapunov \Sigma {\MF\beta} + \Sigma \lyapunov \Sigma {\MF\beta} \lyapunov \Sigma {\MF\alpha}} \ . 
\end{multline}
We are going to compute one by one the three terms in this equation.

The first term of Eq.~\eqref{eq:OP-MF} is

\begin{multline*}
  d_{\MF\alpha} \MF\beta = d_{(E^\alpha \Sigma + \Sigma E^\alpha)}(E^\beta \Sigma + \Sigma E^\beta) = \\
  E^\beta (E^\alpha \Sigma + \Sigma E^\alpha) + (E^\alpha \Sigma + \Sigma E^\alpha) E^\beta = \\ E^\beta E^\alpha \Sigma + E^\beta \Sigma E^\alpha + E^\alpha \Sigma E^\beta + \Sigma E^\alpha E^\beta  \ .
\end{multline*}

The second one is

\begin{multline*}
 - \symmetricof{\lyapunov \Sigma {\MF\alpha} {\MF\beta} + \lyapunov \Sigma {\MF\beta} {\MF\alpha}} = \\ - \symmetricof{E^\alpha(E^\beta \Sigma + \Sigma E^\beta) + E^\beta(E^\alpha \Sigma + \Sigma E^\alpha)} = \\
  - \symmetricof{E^\alpha E^\beta \Sigma + E^\alpha \Sigma E^\beta + E^\beta E^\alpha \Sigma + E^\beta \Sigma E^\alpha} = \\
 - \frac12 \left(E^\alpha E^\beta \Sigma + E^\beta E^\alpha \Sigma + \Sigma E^\beta E^\alpha + \Sigma E^\alpha E^\beta \right) - \left(E^\alpha \Sigma E^\beta  + E^\beta \Sigma E^\alpha\right) \ .
\end{multline*}

Their sum is
\begin{equation*}
  \frac12\left(E^\beta E^\alpha \Sigma + \Sigma E^\alpha E^\beta\right) -
  \frac12\left(E^\alpha E^\beta \Sigma + \Sigma E^\beta E^\alpha\right) \ .
\end{equation*}

The third term is

\begin{multline*}
  \symmetricof{\Sigma \lyapunov \Sigma {\MF\alpha} \lyapunov \Sigma {\MF\beta} + \Sigma \lyapunov \Sigma {\MF\beta} \lyapunov \Sigma {\MF\alpha}} = 
  \symmetricof{\Sigma E^\alpha E^\beta + \Sigma E^\beta E^\alpha} = \\
  \frac12 \left(\Sigma E^\alpha E^\beta + \Sigma E^\beta E^\alpha + E^\beta E^\alpha \Sigma + E^\alpha E^\beta \Sigma\right) \ .
\end{multline*}
\end{proof}

The computation of the Christoffel symbols $\sum_\gamma \Gamma_{\alpha,\beta}^\sigma \MF\gamma = D_{\MF\alpha}\MF\beta $ would require the solution of the equations
\begin{equation*}
  E^\beta E^\alpha \Sigma + \Sigma E^\alpha E^\beta = \sum_\gamma \Gamma_{\alpha,\beta}^\gamma(\Sigma) \left( E^\gamma \Sigma + \Sigma E^\gamma \right) \ .
\end{equation*}
We do not discuss that here. 

Instead, let us take now $X = x_\alpha \MF\alpha$ and $Y=y_\beta \MF\beta$.  Properties \emph{(CD1)} and \emph{(CD2)} lead to

\begin{multline*}
D_{(x_\alpha \MF\alpha)} (y_\beta \MF\beta) = x_\alpha D_{E^\alpha} (y_\beta E^\beta) = x_\alpha \left(d_{E^\alpha}y_\beta E^\beta + y_\beta D_{E^\alpha} E^\beta\right) = \\ 
x_\alpha d_{E^\alpha}y_\beta E^\beta + x_\alpha y_\beta \left(E^\beta E^\alpha \Sigma + \Sigma E^\alpha E^\beta\right) \ .
\end{multline*}

Finally, for general $X$ and $Y$,
\begin{equation*}
D_XY = \sum_{\alpha,\beta} x_\alpha d_{E^\alpha}y_\beta E^\beta + \sum_{\alpha,\beta} x_\alpha y_\beta \left(E^\beta E^\alpha \Sigma + \Sigma E^\alpha E^\beta\right) \ 
\end{equation*}
which is the desired result.

\subsection{Parallel transport\label{sec:parallel-transport}}

The expression of the Levi-Civita derivative in Eq.~\eqref{eq:HP}  can be re-written as
\begin{equation*}
  \label{eq:spray}
\scalarat \Sigma {D_{X}Y} Z =  \scalarat \Sigma {d_{X}Y} Z + \scalarat \Sigma {\Gamma(\Sigma;X,Y)} Z \ ,  
\end{equation*}
where $\Gamma(\Sigma;\cdot,\cdot)$ is the symmetric tensor field
defined by

\begin{multline*} \label{eq:GammavsZ} 
  \scalarat \Sigma {\Gamma(\Sigma;X,Y)} Z = \\ \frac{1}{2}\traceof{\lyapunov \Sigma X Z \lyapunov \Sigma Y} - \frac{1}{2}\traceof{\lyapunov \Sigma X Y \lyapunov \Sigma Z} - \frac{1}{2}\traceof{\lyapunov \Sigma Y X \lyapunov \Sigma Z} = \\
  \frac{1}{2}\traceof{\lyapunov \Sigma Y \lyapunov \Sigma X Z } - \frac{1}{2}\traceof{\left(\lyapunov \Sigma X Y + \lyapunov \Sigma Y X\right) \lyapunov \Sigma Z} = \\
 \frac{1}{2}\traceof{\lyapunov \Sigma Y \lyapunov \Sigma X \left(\lyapunov \Sigma Z \Sigma + \Sigma \lyapunov \Sigma Z\right) } - \frac{1}{2}\traceof{\left(\lyapunov \Sigma X Y + \lyapunov \Sigma Y X\right) \lyapunov \Sigma Z} = \\
  \frac12 \traceof{\left(\Sigma \lyapunov \Sigma Y \lyapunov \Sigma X + \lyapunov \Sigma Y \lyapunov \Sigma X \Sigma - \lyapunov \Sigma X Y - \lyapunov \Sigma Y X \right)\lyapunov \Sigma Z} = \\
\scalarat \Sigma {\symmetricof{\Sigma \lyapunov \Sigma Y \lyapunov \Sigma X + \lyapunov \Sigma Y \lyapunov \Sigma X \Sigma - \lyapunov \Sigma X Y - \lyapunov \Sigma Y X}} {Z} \ .
\end{multline*}

We have
\begin{equation*}
  \label{eq:Gamma}
  \Gamma(\Sigma;X,Y) = \symmetricof{\Sigma \lyapunov \Sigma Y \lyapunov \Sigma X + \lyapunov \Sigma Y \lyapunov \Sigma X \Sigma - \lyapunov \Sigma X Y - \lyapunov \Sigma Y X} \ ,
\end{equation*}
and, on the diagonal,
\begin{equation*}
  \label{eq:Gammadiag}
\Gamma(\Sigma;X,X) =  \Sigma \lyapunov \Sigma  X \lyapunov \Sigma   X + \lyapunov \Sigma X \lyapunov \Sigma  X \Sigma - \lyapunov \Sigma X X - X \lyapunov \Sigma X \ .
\end{equation*}

$\Gamma(\Sigma;X,Y)$ is the expression in the trivial chart of the Christoffel symbol of the Levi-Civita derivative as in \cite{klingenberg:1995}. In  \cite{lang:1995}, $-\Gamma$ is called the spray of the Levi-Civita derivative. 

Given the Christoffel symbol, the linear differential equation of the parallel transport along a curve $t \mapsto \Sigma(t)$ is
\begin{equation*}\label{eq:transportDE}
  \begin{cases}
    \dot U_V(t) + \Gamma(\Sigma(t);\dot \Sigma(t), U_V(t)) = 0 \ , \\
    U_V(0) = V \ ,
  \end{cases}
\end{equation*}
see \cite[VIII, \S 3 and \S 4]{lang:1995}. Recall that the parallel transport for the Levi-Civita derivative is isometric.

We do not discuss here the representation in the moving frame of Eq.~\eqref{eq:transportDE}. We limit ourselves to mention that the action of the Christoffel symbol on vector fields expressed in the moving frame can be computed from

\begin{multline*}
  \label{eq:GammaMF}
  \Gamma(\Sigma;\MF \alpha,\MF \beta) = \\ \symmetricof{\Sigma \lyapunov \Sigma {\MF\beta} \lyapunov \Sigma {\MF\alpha} + \lyapunov \Sigma {\MF\beta} \lyapunov \Sigma {\MF\alpha} \Sigma - \lyapunov \Sigma {\MF\alpha} \MF\beta - \lyapunov \Sigma {\MF\beta} \MF\alpha} = \\
  \symmetricof{\Sigma E^\beta E^\alpha + E^\beta E^\alpha \Sigma - E^\alpha ( E^\beta\Sigma+\Sigma E^\beta) - E^\beta (E^\alpha\Sigma+\Sigma E^\alpha)} = \\
  \symmetricof{\Sigma E^\beta E^\alpha + E^\beta E^\alpha \Sigma - E^\alpha E^\beta\Sigma  - E^\alpha\Sigma E^\beta - E^\beta E^\alpha\Sigma -  E^\beta \Sigma E^\alpha} = \\ - (E^\alpha \Sigma E^\beta + E^\beta \Sigma E^\alpha) \ .
\end{multline*}

\subsection{Riemannian Hessian\label{sec:hess-newt-meth}}

According to \cite[Def. 5.5.1]{absil|mahony|sepulchre:2008} and \cite[p. 141]{docarmo:1992}, the
Riemannian Hessian of a smooth scalar field $\phi \colon \ppsym n \to \reals$,  is the Levi-Civita covariant derivative of the natural gradient $\Grad \phi$. Namely, for each vector field $X$, it is the vector field $\Hessian_X \phi$ whose value at $\Sigma $ is
\begin{equation*}
\operatorname{Hess}_{X}\phi (\Sigma )=D_{X}(\operatorname{grad}\phi )(\Sigma )=D_{X}(\nabla
\phi (\Sigma )\Sigma +\Sigma \nabla \phi (\Sigma )) \ .
\end{equation*}

The associated symmetric bilinear form is (see \cite[Prop.
5.5.3]{absil|mahony|sepulchre:2008})
\begin{equation*}
\operatorname{Hess}\phi (\Sigma )\left( X,Y\right) =\left\langle D_{X}(\operatorname{grad}
\phi )(\Sigma ),Y\right\rangle _{\Sigma } \ .
\end{equation*}

To our purpose it will be enough to compute the diagonal of the symmetric form. Therefore, letting $X=Z=V$ in the second part of Eq. \eqref{eq:HP}, we obtain

\begin{multline*}
\operatorname{Hess}\phi (\Sigma )\left( V,V\right) = \left\langle
d_{V}Y,V\right\rangle _{\Sigma } + \\ \frac{1}{2}\operatorname{Tr}\left[ \mathcal{L}
_{\Sigma }\left[V\right] V\mathcal{L}_{\Sigma }\left[Y\right] \right] -
\frac{1}{2}\operatorname{Tr}\left[ \mathcal{L}_{\Sigma }\left[ V\right] Y\mathcal{L}
_{\Sigma }\left[V\right] \right] 
-\frac{1}{2}\operatorname{Tr}\left[ \mathcal{L}_{\Sigma }\left[ V\right] V\mathcal{
L}_{\Sigma }\left[Y\right] \right] = \\
\left\langle d_{V}Y,V\right\rangle _{\Sigma }-\frac{1}{2}\operatorname{Tr}\left[ 
\mathcal{L}_{\Sigma }\left[V\right] Y\mathcal{L}_{\Sigma }\left[V\right] 
\right] \ ,
\end{multline*}
where $Y=\operatorname{grad}\phi \left( \Sigma \right) $. After plugging $Y = 
\operatorname{grad}\phi \left( \Sigma \right) =\Sigma \nabla \phi \left( \Sigma
\right) +\nabla \phi \left( \Sigma \right) \Sigma $ into it, we get easily

\begin{multline*}
\operatorname{Hess}\phi (\Sigma )\left( V,V\right) = \left\langle \nabla
_{V}^{2}\phi \left( \Sigma \right) \Sigma +\Sigma \nabla _{V}^{2}\phi \left(
\Sigma \right) ,V\right\rangle _{\Sigma }+ \\ \operatorname{Tr}\left[ \nabla \phi \left(
\Sigma \right) V\mathcal{L}_{\Sigma }\left[V\right] \right] -\operatorname{Tr}\left[ \mathcal{L}_{\Sigma }\left[V\right] \nabla \phi \left(
\Sigma \right) \Sigma \mathcal{L}_{\Sigma }\left[V\right] \right] \ .
\end{multline*}

Plugging $V = \lyapunov \Sigma V \Sigma + \Sigma \lyapunov \Sigma V$ into the second term of the RHS, we have at last 
\begin{equation*}
\operatorname{Hess}\phi (\Sigma )\left( V,V\right) =\left\langle \nabla _{V}^{2}\phi
\left( \Sigma \right) \Sigma +\Sigma \nabla _{V}^{2}\phi \left( \Sigma
\right) ,V\right\rangle _{\Sigma }+\operatorname{Tr}\left[ \nabla \phi \left( \Sigma
\right) \mathcal{L}_{\Sigma }\left[V\right] \Sigma \mathcal{L}_{\Sigma
}\left[V\right] \right] \ .  \label{eq:FIN}
\end{equation*}

Relation \eqref{eq:FIN} substantiates the following important property that
links the Hessian to the derivative along a geodesic (see the proof of
Prop. 5.5.4 of \cite{absil|mahony|sepulchre:2008}).

\begin{proposition}
Let $\phi :$ $\operatorname{Sym}^{++}\left( n\right) \rightarrow {\reals}$ be a
smooth scalar field and define
\begin{equation*}
\varphi \left( t\right) =\phi \left( \exp _{\Sigma }\left( tV\right) \right)
\ .
\end{equation*}
It holds 
\begin{equation*}
\ddot{\varphi}\left( 0\right) =\operatorname{Hess}\phi (\Sigma )\left( V,V\right) \ .
\end{equation*}
\end{proposition}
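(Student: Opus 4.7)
The plan is to exploit the defining property of the Riemannian exponential, namely that $\gamma(t) = \operatorname{Exp}_\Sigma(tV)$ is the Levi-Civita geodesic through $\Sigma$ with initial velocity $V$, as established in Prop.~\ref{Prop:FFA}. In particular, $\gamma(0)=\Sigma$, $\dot\gamma(0)=V$, and the geodesic equation $D_{\dot\gamma}\dot\gamma = 0$ holds along the curve. The rest is a two-step differentiation in which the metric compatibility of $D$ kills the boundary term.

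First, I would rewrite the first derivative of $\varphi$ via the natural gradient. By the defining property of $\operatorname{grad}\phi$, for any tangent vector $W$ at a point $\Xi$ one has $d_W\phi(\Xi) = \langle \operatorname{grad}\phi(\Xi),W\rangle_\Xi$. Applying this with $\Xi=\gamma(t)$ and $W=\dot\gamma(t)$ yields
\begin{equation*}
\dot\varphi(t) = d_{\dot\gamma(t)}\phi(\gamma(t)) = \langle \operatorname{grad}\phi(\gamma(t)),\dot\gamma(t)\rangle_{\gamma(t)}\ .
\end{equation*}

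Second, I would differentiate once more along $\gamma$ and invoke the compatibility property \emph{(LC1)} of the Levi-Civita connection with the Riemannian metric. This gives
\begin{equation*}
\ddot\varphi(t) = \langle D_{\dot\gamma(t)}(\operatorname{grad}\phi)(\gamma(t)),\dot\gamma(t)\rangle_{\gamma(t)} + \langle \operatorname{grad}\phi(\gamma(t)), D_{\dot\gamma(t)}\dot\gamma(t)\rangle_{\gamma(t)}\ .
\end{equation*}
The second term vanishes identically because $\gamma$ is a geodesic, i.e.\ $D_{\dot\gamma}\dot\gamma=0$. Evaluating the surviving term at $t=0$, where $\gamma(0)=\Sigma$ and $\dot\gamma(0)=V$, one obtains
\begin{equation*}
\ddot\varphi(0) = \langle D_V(\operatorname{grad}\phi)(\Sigma),V\rangle_\Sigma = \operatorname{Hess}\phi(\Sigma)(V,V)\ ,
\end{equation*}
which is exactly the statement to be proved.

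There is no real obstacle here: the argument is the standard characterization of the Riemannian Hessian as the second derivative of a scalar field along geodesics, and every ingredient is available—the geodesic equation for $t\mapsto \operatorname{Exp}_\Sigma(tV)$ from Prop.~\ref{Prop:FFA}, the definition of $\operatorname{grad}\phi$ as the metric dual of $d\phi$, and the compatibility property \emph{(LC1)} of the Levi-Civita derivative with $\langle\cdot,\cdot\rangle_\Sigma$. The only delicate point worth a remark is interpreting $D_{\dot\gamma(t)}(\operatorname{grad}\phi)(\gamma(t))$ as the covariant derivative of the ambient vector field $\operatorname{grad}\phi$ restricted to $\gamma$, which is unambiguous once $D$ has been extended tensorially to vector fields along curves.
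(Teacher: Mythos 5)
Your argument is correct as abstract Riemannian geometry, but it is a genuinely different proof from the paper's, and it leans on one fact the paper never establishes. The paper's proof is a bare-hands comparison of two explicit formulas: since $\operatorname{Exp}_{\Sigma}(tV)=\Sigma+tV+t^{2}\mathcal{L}_{\Sigma}[V]\Sigma\mathcal{L}_{\Sigma}[V]$ is quadratic in $t$, one differentiates twice in the trivial chart to get
\begin{equation*}
\ddot{\varphi}(0)=\left\langle \nabla^{2}\phi(\Sigma)[V],V\right\rangle_{2}+2\left\langle \nabla\phi(\Sigma),\mathcal{L}_{\Sigma}[V]\Sigma\mathcal{L}_{\Sigma}[V]\right\rangle_{2}\ ,
\end{equation*}
with Euclidean gradient and Hessian, and matches this against the closed form of $\operatorname{Hess}\phi(\Sigma)(V,V)$ computed just before the proposition from the Levi-Civita formula of Prop.~\ref{th:levi}. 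No geodesic equation and no metric compatibility enter. Your route is the standard one (essentially the proof of Prop.~5.5.4 of Absil--Mahony--Sepulchre, which the paper itself points to): it is cleaner and valid on any Riemannian manifold, whereas the paper's computation is specific to this metric but entirely self-contained within its own results.

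The step you cannot source from Prop.~\ref{Prop:FFA} is precisely $D_{\dot\gamma}\dot\gamma=0$. In this paper a geodesic is, by construction, the projection of a horizontal line through the submersion $\sigma$ (equivalently, a metric geodesic of the Wasserstein distance); the Levi-Civita derivative is computed separately, and the equivalence of the two notions, i.e.\ that the submersion/metric geodesics solve the Levi-Civita geodesic ODE, is standard (O'Neill's theory of Riemannian submersions) but is nowhere proved in the paper. Since your proof invokes this exactly where the term $\langle\operatorname{grad}\phi,D_{\dot\gamma}\dot\gamma\rangle$ is discarded, you should close that gap; with the paper's Christoffel symbol it takes one line. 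In the trivial chart the paper's convention gives $D_{\dot\gamma}\dot\gamma=\ddot\gamma+\Gamma(\gamma;\dot\gamma,\dot\gamma)$, consistent with its parallel-transport ODE. Writing $L=\mathcal{L}_{\Sigma}[V]$, so that $V=L\Sigma+\Sigma L$, the diagonal formula for $\Gamma$ in the parallel-transport subsection yields
\begin{equation*}
\Gamma(\Sigma;V,V)=\Sigma L^{2}+L^{2}\Sigma-LV-VL=-2L\Sigma L=-\ddot{\gamma}(0)\ ,
\end{equation*}
hence $D_{\dot\gamma}\dot\gamma(0)=\ddot{\gamma}(0)+\Gamma(\Sigma;\dot{\gamma}(0),\dot{\gamma}(0))=0$, which is all your argument requires, since you only evaluate at $t=0$. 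With this verification added, your proof is complete and arguably more transparent than the paper's.
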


\begin{proof}
By Proposition \ref{Prop:FFA} 
\begin{equation*}
\Sigma (t)=\operatorname{Exp}_{\Sigma }\left( tV\right) =\Sigma +tV+t^{2}\mathcal{L}
_{\Sigma }[V]\Sigma \mathcal{L}_{\Sigma }[V]
\end{equation*}
where $\Sigma (0)=\Sigma $ and $\dot{\Sigma}(0)=V.$ Hence $\dot{\varphi}
\left( t\right) =\left\langle \nabla \phi (\Sigma (t)),\dot{\Sigma}
(t)\right\rangle _{2}$, and 
\begin{equation*}
\ddot{\varphi}\left( t\right) =\left\langle \nabla ^{2}\phi (\Sigma (t))[
\dot{\Sigma}(t)],\dot{\Sigma}(t)\right\rangle _{2}+\left\langle \nabla \phi
(\Sigma (t)),\ddot{\Sigma}(t)\right\rangle _{2}\ 
\end{equation*}
that evaluated at $t=0$, provides 
\begin{equation*}
\ddot{\varphi}\left( 0\right) =\left\langle \nabla ^{2}\phi (\Sigma
)[V],V\right\rangle _{2}+2\left\langle \nabla \phi (\Sigma ),\mathcal{L}_{\Sigma }(V)\Sigma \mathcal{L}_{\Sigma }(V)\right\rangle _{2}.
\end{equation*}

In view of Eq.~\eqref{eq:FIN}, 
\begin{equation*}
\operatorname{Hess}\phi (\Sigma )\left( V,V\right) =\left\langle \nabla _{V}^{2}\phi
\left( \Sigma \right) ,V\right\rangle _{2}+2\left\langle \nabla \phi \left(
\Sigma \right) ,\mathcal{L}_{\Sigma }\left[V\right] \Sigma \mathcal{L}_{\Sigma }\left[V\right] \right\rangle =\ddot{\varphi}\left( 0\right) .
\end{equation*}
\end{proof}

\section{Conclusion}\label{sec:discussion}

In the present paper we have discussed in some detail the Wasserstein geometric properties of the Gaussian densities manifold. We have followed a known argument based on the geometric notion of submersion. We have improved upon what is known in the literature by offering  a number of further results. In particular, we have studied the geodesic surfaces and provided an explicit form for the Riemannian exponential. More important, a new formulation of the metric based on the field of operators $\Sigma \mapsto \lyapunov \Sigma \cdot$ is introduced. This field of operator expresses the Riemannian metric by the Frobenius inner product: $W_\Sigma(X,Y) = \scalarat 2 {\lyapunov \Sigma X}{Y}$. This gives rise to an explicit identification of the Riemannian gradient as well as  to the calculation of the Levi-Civita covariant derivative, through the partial derivatives of the metric. The equations of the parallel transport and of the  Riemannian Hessian have been also derived.

While the form of the natural gradient is simple and may be a source of applications such as those of interest in Machine Learning, the Levi-Civita covariant derivative turns out to be more involved and it is not clear how to use it in applications. However, we have produced a simpler form by the introduction of a special moving frame. In view of this issue, we have not proceeded in this paper to compute  other geometrical quantities of interest, like the curvature tensor.

Numerical as well as simulation methods for the relevant equation of the geometry, like geodesics, parallel transport, Hessians, should be also considered. Applications of special interest are in the area of the linear optimization, by means of the natural gradient as direction of increase and by using the Riemannian exponential as a retraction, cf. \cite{absil|mahony|sepulchre:2008} and in Amari monograph \cite{amari:2016}. Also, second order optimization methods (Newton method), via the Riemannian Hessian and the Riemannian exponential, cf. \cite{absil|mahony|sepulchre:2008} and \cite{amari:2016}, are source of promising researches.

The issue of a comparison between Fisher and Wasserstein metric is not discussed here as it is, for example, in Chevallier et al. \cite{chevallier|kalunga|angulo:2017}.

From the point of view of applications in Statistics and Machine Learning, the use of the full Gaussian model is not realistic in many cases. We expect our results to be used to compute the Wasserstein geometry induced on parsimonious sub-manifolds such as those listed below.

\begin{enumerate}
\item Sub-manifold of the correlation matrices i.e, with unitary diagonal elements. In this case, the tangent space at each point is the space of symmetric matrices with zero diagonal.
\item Sub-manifold of trace 1 matrices. This case is of particular interest in Physics and prompts for a generalization of the theory to complex Gaussians i.e., Gaussians densities on $\complex^n$. Such distributions have Hermitian covariant matrices, a case that is discussed in  \cite{bhatia|jain|lim:2018}.
\item Sub-manifold of the concentration matrices with a given sparsity pattern. Notice that concentration matrices and dispersion matrices are both elements of the same space $\ppsym n$. In this case the statistical interpretation of the Wasserstein distance is not available but nevertheless other interpretations of the distance are mentioned in the Introduction.
\end{enumerate}


\bibliographystyle{spmpsci}      
\bibliography{tutto}

\begin{thebibliography}{10}
\providecommand{\url}[1]{{#1}}
\providecommand{\urlprefix}{URL }
\expandafter\ifx\csname urlstyle\endcsname\relax
  \providecommand{\doi}[1]{DOI~\discretionary{}{}{}#1}\else
  \providecommand{\doi}{DOI~\discretionary{}{}{}\begingroup
  \urlstyle{rm}\Url}\fi

\bibitem{absil|mahony|sepulchre:2008}
Absil, P.A., Mahony, R., Sepulchre, R.: Optimization algorithms on matrix
  manifolds.
\newblock Princeton University Press (2008).
\newblock With a foreword by Paul Van Dooren

\bibitem{aliprantis|border:2006}
Aliprantis, C.D., Border, K.C.: Infinite dimensional analysis, third edn.
\newblock Springer, Berlin (2006).
\newblock A hitchhiker's guide

\bibitem{amari|nagaoka:2000}
Amari, S., Nagaoka, H.: Methods of information geometry.
\newblock American Mathematical Society (2000).
\newblock Translated from the 1993 Japanese original by Daishi Harada

\bibitem{amari:1998natural}
Amari, S.I.: Natural gradient works efficiently in learning.
\newblock Neural Computation \textbf{10}(2), 251--276 (1998).
\newblock \doi{10.1162/089976698300017746}.
\newblock \urlprefix\url{http://dx.doi.org/10.1162/089976698300017746}

\bibitem{amari:2016}
Amari, S.i.: Information geometry and its applications, \emph{Applied
  Mathematical Sciences}, vol. 194.
\newblock Springer, [Tokyo] (2016).
\newblock \urlprefix\url{https://doi.org/10.1007/978-4-431-55978-8}

\bibitem{anderson:2003}
Anderson, T.W.: An introduction to multivariate statistical analysis, third
  edn.
\newblock Wiley Series in Probability and Statistics. Wiley-Interscience [John
  Wiley \& Sons], Hoboken, NJ (2003)

\bibitem{bhatia:2007PDM}
Bhatia, R.: Positive definite matrices.
\newblock Princeton Series in Applied Mathematics. Princeton University Press,
  Princeton, NJ (2007).
\newblock [2015] paperback edition of the 2007 original [ MR2284176]

\bibitem{bhatia|jain|lim:2018}
{Bhatia}, R., {Jain}, T., {Lim}, Y.: {On the Bures-Wasserstein distance between
  positive definite matrices}.
\newblock Expositiones Mathematicae  (2018).
\newblock In press. Available online
  \url{https://doi.org/10.1016/j.exmath.2018.01.002}. arXiv:1712.01504

\bibitem{brenier:1991}
Brenier, Y.: Polar factorization and monotone rearrangement of vector-valued
  functions.
\newblock Comm. Pure Appl. Math. \textbf{44}(4), 375--417 (1991).
\newblock \doi{10.1002/cpa.3160440402}.
\newblock \urlprefix\url{https://doi.org/10.1002/cpa.3160440402}

\bibitem{docarmo:1992}
do~Carmo, M.P.: Riemannian geometry.
\newblock Mathematics: Theory \& Applications. Birkhäuser Boston Inc. (1992).
\newblock Translated from the second Portuguese edition by Francis Flaherty

\bibitem{chevallier|kalunga|angulo:2017}
Chevallier, E., Kalunga, E., Angulo, J.: Kernel density estimation on spaces of
  {G}aussian distributions and symmetric positive definite matrices.
\newblock SIAM J. Imaging Sci. \textbf{10}(1), 191--215 (2017).
\newblock \doi{10.1137/15M1053566}.
\newblock \urlprefix\url{https://doi.org/10.1137/15M1053566}

\bibitem{dowson|landau:1982}
Dowson, D.C., Landau, B.V.: The {F}r\'echet distance between multivariate
  normal distributions.
\newblock J. Multivariate Anal. \textbf{12}(3), 450--455 (1982).
\newblock \doi{10.1016/0047-259X(82)90077-X}.
\newblock \urlprefix\url{http://dx.doi.org/10.1016/0047-259X(82)90077-X}

\bibitem{gelbrich:1990}
Gelbrich, M.: On a formula for the {$L^2$} {W}asserstein metric between
  measures on {E}uclidean and {H}ilbert spaces.
\newblock Math. Nachr. \textbf{147}, 185--203 (1990).
\newblock \doi{10.1002/mana.19901470121}.
\newblock \urlprefix\url{https://doi.org/10.1002/mana.19901470121}

\bibitem{givens|shortt:1984}
Givens, C.R., Shortt, R.M.: A class of {W}asserstein metrics for probability
  distributions.
\newblock Michigan Math. J. \textbf{31}(2), 231--240 (1984).
\newblock \doi{10.1307/mmj/1029003026}.
\newblock \urlprefix\url{https://doi.org/10.1307/mmj/1029003026}

\bibitem{halmos:1958}
Halmos, P.R.: Finite-dimensional vector spaces.
\newblock The University Series in Undergraduate Mathematics. D. Van Nostrand
  Co., Inc., Princeton-Toronto-New York-London (1958).
\newblock 2nd ed

\bibitem{MR2249836}
Hyvärinen, A.: Estimation of non-normalized statistical models by score
  matching.
\newblock J. Mach. Learn. Res. \textbf{6}, 695--709 (2005)

\bibitem{klingenberg:1995}
Klingenberg, W.P.A.: Riemannian geometry, \emph{De Gruyter Studies in
  Mathematics}, vol.~1, second edn.
\newblock Walter de Gruyter \& Co., Berlin (1995).
\newblock \doi{10.1515/9783110905120}.
\newblock \urlprefix\url{https://doi.org/10.1515/9783110905120}

\bibitem{knott|smith:1984}
Knott, M., Smith, C.S.: On the optimal mapping of distributions.
\newblock J. Optim. Theory Appl. \textbf{43}(1), 39--49 (1984).
\newblock \doi{10.1007/BF00934745}.
\newblock \urlprefix\url{https://doi.org/10.1007/BF00934745}

\bibitem{lafferty:1988}
Lafferty, J.D.: The density manifold and configuration space quantization.
\newblock Trans. Amer. Math. Soc. \textbf{305}(2), 699--741 (1988).
\newblock \doi{10.2307/2000885}.
\newblock \urlprefix\url{https://doi.org/10.2307/2000885}

\bibitem{lang:1995}
Lang, S.: Differential and {R}iemannian manifolds, \emph{Graduate Texts in
  Mathematics}, vol. 160, third edn.
\newblock Springer-Verlag (1995)

\bibitem{lott:2008calculations}
Lott, J.: Some geometric calculations on {W}asserstein space.
\newblock Comm. Math. Phys. \textbf{277}(2), 423--437 (2008).
\newblock \doi{10.1007/s00220-007-0367-3}.
\newblock \urlprefix\url{https://doi.org/10.1007/s00220-007-0367-3}

\bibitem{magnus|neudecker:1999}
Magnus, J.R., Neudecker, H.: Matrix differential calculus with applications in
  statistics and econometrics.
\newblock Wiley Series in Probability and Statistics. John Wiley \& Sons, Ltd.,
  Chichester (1999).
\newblock Revised reprint of the 1988 original

\bibitem{malago|pistone:2014Entropy}
Malag\`o, L., Pistone, G.: Combinatorial optimization with information
  geometry: Newton method.
\newblock Entropy \textbf{16}, 4260--4289 (2014)

\bibitem{malago|pistone:2015FOGA}
Malag\`o, L., Pistone, G.: Information geometry of the {G}aussian distribution
  in view of stochastic optimization.
\newblock In: Proceedings of FOGA'15, held on January 17-20, 2015, Aberystwyth,
  Wales, 2015 (2015)

\bibitem{mangasarian|fromovitz:1967}
Mangasarian, O.L., Fromovitz, S.: The {F}ritz {J}ohn necessary optimality
  conditions in the presence of equality and inequality constraints.
\newblock J. Math. Anal. Appl. \textbf{17}, 37--47 (1967).
\newblock \doi{10.1016/0022-247X(67)90163-1}.
\newblock \urlprefix\url{https://doi.org/10.1016/0022-247X(67)90163-1}

\bibitem{mccann:1997}
McCann, R.J.: A convexity principle for interacting gases.
\newblock Adv. Math. \textbf{128}(1), 153--179 (1997).
\newblock \doi{10.1006/aima.1997.1634}.
\newblock \urlprefix\url{https://doi.org/10.1006/aima.1997.1634}

\bibitem{mccann:2001}
McCann, R.J.: Polar factorization of maps on {R}iemannian manifolds.
\newblock Geom. Funct. Anal. \textbf{11}(3), 589--608 (2001).
\newblock \doi{10.1007/PL00001679}.
\newblock \urlprefix\url{https://doi.org/10.1007/PL00001679}

\bibitem{olkin|pukelsheim:1982}
Olkin, I., Pukelsheim, F.: The distance between two random vectors with given
  dispersion matrices.
\newblock Linear Algebra Appl. \textbf{48}, 257--263 (1982).
\newblock \doi{10.1016/0024-3795(82)90112-4}.
\newblock \urlprefix\url{https://doi.org/10.1016/0024-3795(82)90112-4}

\bibitem{otto:2001}
Otto, F.: The geometry of dissipative evolution equations: the porous medium
  equation.
\newblock Comm. Partial Differential Equations \textbf{26}(1-2), 101--174
  (2001).
\newblock \urlprefix\url{../publications/Riemann.ps}

\bibitem{papadopoulos:2014}
Papadopoulos, A.: Metric spaces, convexity and non-positive curvature,
  \emph{IRMA Lectures in Mathematics and Theoretical Physics}, vol.~6, second
  edn.
\newblock European Mathematical Society (EMS), Z\"urich (2014).
\newblock \doi{10.4171/132}.
\newblock \urlprefix\url{https://doi.org/10.4171/132}

\bibitem{parry|dawid|lauritzen:2012}
Parry, M., Dawid, A.P., Lauritzen, S.: Proper local scoring rules.
\newblock Ann. Statist. \textbf{40}(1), 561--592 (2012).
\newblock \doi{10.1214/12-AOS971}.
\newblock \urlprefix\url{http://dx.doi.org/10.1214/12-AOS971}

\bibitem{pistone:2013GSI}
Pistone, G.: Nonparametric information geometry.
\newblock In: F.~Nielsen, F.~Barbaresco (eds.) Geometric science of
  information, \emph{Lecture Notes in Comput. Sci.}, vol. 8085, pp. 5--36.
  Springer, Heidelberg (2013).
\newblock First International Conference, GSI 2013 Paris, France, August 28-30,
  2013 Proceedings

\bibitem{pistone|sempi:95}
Pistone, G., Sempi, C.: An infinite-dimensional geometric structure on the
  space of all the probability measures equivalent to a given one.
\newblock Ann. Statist. \textbf{23}(5), 1543--1561 (1995)

\bibitem{simoncini:2016}
Simoncini, V.: Computational methods for linear matrix equations.
\newblock SIAM Rev. \textbf{58}(3), 377--441 (2016).
\newblock \doi{10.1137/130912839}.
\newblock \urlprefix\url{https://doi.org/10.1137/130912839}

\bibitem{skovgaard:1984}
Skovgaard, L.T.: A {R}iemannian geometry of the multivariate normal model.
\newblock Scand. J. Statist. \textbf{11}(4), 211--223 (1984)

\bibitem{takatsu:2011osaka}
Takatsu, A.: Wasserstein geometry of {G}aussian measures.
\newblock Osaka J. Math. \textbf{48}(4), 1005--1026 (2011).
\newblock \urlprefix\url{http://projecteuclid.org/euclid.ojm/1326291215}

\bibitem{villani:2008optimal}
Villani, C.: Optimal Transport: Old and New.
\newblock Grundlehren der mathematischen Wissenschaften. Springer Berlin
  Heidelberg (2008).
\newblock \urlprefix\url{https://books.google.it/books?id=hV8o5R7\_5tkC}

\bibitem{wachspress:2008}
Wachspress, E.L.: Trail to a {L}yapunov equation solver.
\newblock Comput. Math. Appl. \textbf{55}(8), 1653--1659 (2008).
\newblock \doi{10.1016/j.camwa.2007.04.048}.
\newblock \urlprefix\url{https://doi.org/10.1016/j.camwa.2007.04.048}

\end{thebibliography}

\end{document}